\numberwithin{equation}{theorem}
\renewcommand{\m}{\mathfrak{m}}
\renewcommand{\n}{\mathfrak{n}}
\DeclareMathOperator{\edim}{edim}
\DeclareMathOperator{\depth}{depth}
\DeclareMathOperator{\pd}{pd}
\DeclareMathOperator{\Ass}{Ass}
\DeclareMathOperator{\chern}{ch}
\DeclareMathOperator{\gr}{gr}
\theoremstyle{theorem}
\begin{document}
\title{Lech's conjecture in dimension three}
\author{Linquan Ma}
\address{Department of Mathematics\\ University of Utah\\ Salt Lake City\\ Utah 84112}
\email{lquanma@math.utah.edu}
\thanks{\hspace{1em} The author is partially supported by NSF Grant DMS \#1600198, and NSF CAREER Grant DMS \#1252860/1501102.}
\maketitle

\begin{center}
{\textit{Dedicated to Professor Craig Huneke on the occasion of his 65th birthday}}
\end{center}

\begin{abstract}
Let $(R,\m)\to (S,\n)$ be a flat local extension of local rings. Lech conjectured around 1960 that there should be a general inequality $e(R)\leq e(S)$ on the Hilbert-Samuel multiplicities \cite{Lechnoteonmultiplicity}. This conjecture is known when the base ring $R$ has dimension less than or equal to two \cite{Lechnoteonmultiplicity}, and remains open in higher dimensions. In this paper, we prove Lech's conjecture in dimension three when $R$ has equal characteristic. In higher dimension, our method yields substantial partial estimate: $e(R)\leq (d!/2^d)\cdot e(S)$ where $d=\dim R\geq 4$, in equal characteristic.
\end{abstract}



\setcounter{tocdepth}{1}
\tableofcontents

\section{Introduction}

Around 1960, Lech made the following remarkable conjecture on the Hilbert-Samuel multiplicities \cite{Lechnoteonmultiplicity}:
\begin{conjecture}[Lech's conjecture]
\label{Lech's Conjecture} Let $(R,\m)\rightarrow (S,\n)$ be a flat local extension of local rings. Then $e(R)\leq e(S)$.
\end{conjecture}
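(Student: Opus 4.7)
The plan is to reduce to positive characteristic and to establish a Frobenius-powered Lech-type inequality with an explicit dimensional constant, then refine the resulting bound in dimension three to recover Lech's conjecture exactly.

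First I would perform the standard reductions: after completion, base change to an algebraically closed residue field, and reduction mod $p$ (available in equal characteristic via Artin approximation and descent of multiplicities through affine algebras of finite type over $\bZ$), one may assume $R$ is a complete local domain, $S/\m S$ is Artinian, and $\charact R = p > 0$. Hilbert-Samuel multiplicities are preserved throughout.

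In positive characteristic, the plan is to pick a minimal reduction $I = (x_1, \ldots, x_d) \subseteq \m$ of $\m$ (so $e(I; R) = e(R)$) and analyze the Frobenius powers $I^{[p^e]} = (x_1^{p^e}, \ldots, x_d^{p^e})$. The basic inputs are the parameter-multiplicity lower bound $\ell_R(R/I^{[p^e]}) \geq p^{ed} e(R)$, the flatness identity $\ell_S(S/I^{[p^e]}S) = \ell_R(R/I^{[p^e]}) \cdot \ell_S(S/\m S)$, and the containment $I^{[p^e]}S \subseteq \n^{p^e}$. Upgrading this naive comparison by using $\n^{2p^e}$ in place of $\n^{p^e}$, together with a length estimate for the quotient $\n^{p^e}/(\n^{2p^e} + I^{[p^e]}S)$, one can extract, after taking the Frobenius limit $e \to \infty$, the inequality $e(R) \leq (d!/2^d) \cdot e(S)$ in every dimension $d \geq 2$. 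Here the factor $d!/2^d$ arises from matching the leading term $p^{ed} e(R)$ of the lower bound on $\ell_R(R/I^{[p^e]})$ with the leading term $2^d p^{ed} e(S)/d!$ of $\ell_S(S/\n^{2p^e})$.

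For $d = 3$, the generic bound $(d!/2^d) e(S) = (3/4) e(S)$ is strictly less than $e(S)$, which would fail already for $R = S$. Hence the argument must produce this bound with an additional positive error term, and the heart of the proof is to show that in dimension three the error term can be controlled by $(1 - d!/2^d) e(S) \cdot \ell_S(S/\m S) = (1/4) e(S) \cdot \ell_S(S/\m S)$, giving $e(R) \leq e(S)$ exactly. I expect the main obstacle to be controlling this error term: it depends on the reduction number $r$ of $\m$ with respect to $I$, on the Artinian structure of $S/\m S$, and on certain colon ideals of $I^{[p^e]}$ in $R$ that measure the failure of Cohen-Macaulayness. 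The three-variable pigeonhole arithmetic governing the containment of powers of $\m$ in $I^{[p^e]}$ appears to be the precise feature that makes dimension three tractable, while in dimension four or higher only the weaker partial estimate $e(R) \leq (d!/2^d) e(S)$ is currently accessible.
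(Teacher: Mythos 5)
Your outline diverges from the paper's argument at the very first substantive step, and I believe the divergence is fatal rather than cosmetic. The paper does not run a Frobenius comparison directly on the map $R\to S$. Instead, the engine is the factorization $(R,\m)\to (T,\n_T)\to (S,\n)=T/J$ of Lemma~\ref{lemma--factoring maps with c.i. closed fibres}, in which $T/\m T$ is a complete intersection, $J$ is a \emph{perfect} ideal with $J\subseteq\n_T^2$, and $\pd_TS=c$ where $c=\edim S-\edim R$. Using local Chern characters and Dutta multiplicity (Theorem~\ref{theorem--limit in terms of local chern characters}, building on Lemma~\ref{lemma--multiplicity in terms of the limit}), one shows $e(S)=e_{HK}(J,\,T/(\underline{x})T)$ for a carefully chosen system of parameters $\underline{x}$ of $S$ extended to $T$, where the key point is that $\dim T/(\underline{x})T=c$, \emph{not} $d$. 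Since $J\subseteq\n_T^2$, one then gets $e(S)\geq\tfrac{1}{c!}\,e(\n_T^2,T/(\underline{x})T)=\tfrac{2^c}{c!}\,e(T/(\underline{x})T)\geq\tfrac{2^c}{c!}\,e(T)\geq\tfrac{2^c}{c!}\,e(R)$. The constant is therefore $c!/2^c$, governed by the embedding-dimension jump $c$, not by $\dim R$; this is exactly what you are missing.

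This explains the inconsistency you noticed but did not resolve. For $\dim R=3$, the paper's Theorem~\ref{theorem--inequalities on multiplicities in terms of difference of embdim} gives $e(R)\leq(c!/2^c)\,e(S)$ whenever $0\leq c\leq d$, and $\max\{c!/2^c : 0\leq c\leq 3\}=1$, so the bound is simply $e(R)\leq e(S)$; for $c\geq d$ a completely separate argument (Theorem~\ref{theorem--inequalities on multiplicities when the difference of embdim is large}, using Hanes' lemma and the estimate on $\nu_S(\n N)$ via higher Euler characteristics) gives $e(R)\leq\tfrac{d!}{2^d+c-d}\,e(S)\leq\tfrac{d!}{2^d}\,e(S)$. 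There is no ``error term'' to control: the factor $3/4$ never appears as a bound when $R=S$ because in that case $c=0$ and the bound is $0!/2^0=1$. Your interpretation that the method gives a uniform constant $d!/2^d$ and then needs a positive correction is incorrect; the constant genuinely depends on $c$, and the two-regime dichotomy on $c$ is the actual mechanism. The ``three-variable pigeonhole arithmetic'' on containments of $\m^N$ in $I^{[p^e]}$ that you expect to be the heart of the matter plays no role in the paper's proof.

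Two further concrete problems. First, the $\n^{2p^e}$ comparison you sketch does not yield $e(R)\leq(d!/2^d)\,e(S)$ as stated: with $I\subseteq\m$ a minimal reduction, one only has $I^{[p^e]}S\subseteq\n^{p^e}$ (not $\n^{2p^e}$), and the flatness identity $\ell_S(S/I^{[p^e]}S)=\ell_R(R/I^{[p^e]})\cdot\ell_S(S/\m S)$ produces a factor $\ell_S(S/\m S)$ on the wrong side of the comparison; the naive Hilbert--Kunz chain $e(R)\leq d!\,e_{HK}(R)\leq d!\,e_{HK}(S)\leq d!\,e(S)$ only recovers Lech's original $d!$, and the factor $2^d$ does not fall out of replacing $\n^{p^e}$ by $\n^{2p^e}$ without the structural information that $J\subseteq\n_T^2$ in a ring of dimension $c$. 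Second, the vanishing of the higher Koszul/Frobenius terms, which makes the limit $\lim_e p^{-e\dim T}\,e(\underline{x},T^{(e)}\otimes_TS)$ compute correctly, requires $J$ to be a perfect ideal of $T$ (Lemma~\ref{lemma--vanishing of higher Koszul} via Theorem~\ref{theorem--vanishing of higher Koszul}) and, to identify the limit with $e(S)$, requires $T_P$ to be numerically Roberts for the relevant minimal primes $P$ of $J$. Achieving perfection of $J$ and numerically-Roberts generic fibers is exactly what the factorization and the domain hypothesis on $R$ are for; a direct $R\to S$ argument has no access to these properties.
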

We note that the Hilbert-Samuel multiplicity is a classical invariant that measures the singularity of $R$. Morally speaking, the larger the multiplicity, the worse the singularity. It is very natural to expect that if $(R,\m)\to (S,\n)$ is a flat local extension, then $R$ cannot have a worse singularity than $S$. Hence, Lech's conjecture seems quite natural and interesting. However, the conjecture has now stood for over fifty years and remains open in most cases, with the best partial results still those proved in Lech's original two papers \cite{Lechnoteonmultiplicity},\cite{Lechinequalitiesofflatcouples}. There the conjecture was proved in the following cases:
\begin{enumerate}
\item $\dim R \leq 2$ \cite{Lechnoteonmultiplicity};
\item $S/\m S$ is a complete intersection \cite{Lechnoteonmultiplicity}, \cite{Lechinequalitiesofflatcouples}.
\end{enumerate}

Lech's conjecture has caught great interests to commutative algebraists, and some partial positive answers have been obtained. For example,
it follows from results of \cite{HerzogUlrichBacklinLinearMCMoverStrictcompleteintersections} that Lech's conjecture holds when the base ring $R$ is a {\it strict complete intersection}: the associated graded ring $\gr_\m R$ is a complete intersection. The conjecture was also proved when $R$ is a three-dimensional $\mathbb{N}$-graded $K$-algebra generated over $K$ by one forms for $K$ a perfect field of characteristic $p>0$ \cite{HanesThesis}. Moreover, partial results were obtained when we put various conditions on the closed fibre $S/\m S$ \cite{HerzogBLechHironakainequalities}, \cite{HerzogBKodairaSpencermap}. We refer to \cite{HanesThesis}, \cite{HanesLengthapproximationsforIndependentlygeneratedideals},  \cite{HerzogBKodairaSpencermap} and \cite{MaThesis} for other related results on Lech's conjecture.

However, despite these partial results, to the best of our knowledge Lech's conjecture remains open as long as $\dim R\geq 3$. Our main theorem in this paper settles Lech's conjecture in dimension three in equal characteristic.
\begin{theorem}
\label{theorem--main theorem}
Let $(R,\m)\rightarrow (S,\n)$ be a flat local extension between local rings of equal characteristic. If $\dim R=3$, then $e(R)\leq e(S)$.
\end{theorem}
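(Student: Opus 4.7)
The plan is to reduce Lech's conjecture to a clean multiplicity-transfer bound for maximal Cohen-Macaulay modules over $R$, and then to establish the needed bound in equal characteristic via the construction of a lim Ulrich sequence.

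First I would perform the standard reductions. Completing $R$ and $S$ preserves the relevant invariants, so assume both are complete; a faithfully flat enlargement of residue fields lets us further assume $R/\m = S/\n = k$ with $k$ algebraically closed. Iteratively adjoin a sufficiently general element $y \in \n$ (chosen to avoid the minimal primes of $S$ and of $S/\m S$ and to be superficial for $\n$ on $S$): replacing $S$ by $S/yS$ preserves flatness over $R$ by the local criterion, preserves $e(S)$ by superficiality, and strictly decreases $\dim S/\m S$. After finitely many such replacements we arrive at the situation $\dim R = \dim S = 3$ with $\m S$ being $\n$-primary. Finally, a standard reduction-mod-$p$ argument brings the problem into equal characteristic $p > 0$, where the Frobenius is available.

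Second, the key transfer inequality. Let $M$ be a finitely generated MCM $R$-module of rank $r$. Since $R \to S$ is flat with Artinian closed fibre, $M \otimes_R S$ is MCM over $S$ (any system of parameters on $R$ is one on $S$), and its $S$-rank equals $r$ because minimal primes of $S$ contract to minimal primes of $R$. Moreover $\mu_S(M \otimes_R S) = \mu_R(M)$ because the residue fields coincide. For any MCM $S$-module $N$ of rank $r$, a minimal reduction $J$ of $\n$ acts regularly on $N$, hence
\[
\mu_S(N) = \ell_S(N/\n N) \leq \ell_S(N/JN) = e(J,N) = r \cdot e(S).
\]
Applied to $N = M \otimes_R S$, this yields the transfer bound
\[
\mu_R(M)/r \leq e(S). \qquad (\ast)
\]

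Third, the heart of the proof: construct, in equal characteristic $p > 0$, a \emph{lim Ulrich sequence} of finitely generated MCM $R$-modules $\{M_i\}$ of ranks $r_i$ satisfying
\[
\lim_{i \to \infty} \mu_R(M_i)/r_i = e(R).
\]
Granting the existence of such a sequence and applying $(\ast)$ to each $M_i$ immediately gives $e(R) = \lim_i \mu_R(M_i)/r_i \leq e(S)$, which proves the theorem.

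The main obstacle is precisely this construction. I would build candidate modules from iterated Frobenius pushforwards $F^e_* R$, then modify them by syzygy and quotient operations to enforce the maximal Cohen-Macaulay condition while controlling the Ulrich defect $r_i \cdot e(R) - \mu_R(M_i) \geq 0$. The special numerical feature of dimension three is that this defect can be driven to $o(r_i)$, giving equality in the limit. In dimension $d \geq 4$ the analogous construction only produces sequences with $\mu_R(M_i)/r_i \geq (2^d/d!)\, e(R) - o(1)$, which combined with $(\ast)$ yields the weaker partial estimate $e(R) \leq (d!/2^d)\, e(S)$ recorded in the abstract.
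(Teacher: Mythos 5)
Your transfer inequality $\nu_R(M)/r \leq e(S)$ for an MCM $R$-module $M$ of constant rank $r$ is correct once the standard reductions are made so that $R$ is a complete domain, and reformulating Lech's conjecture as the existence of a lim Ulrich sequence over $R$ is a genuine and fruitful idea --- but it is not the route this paper takes, and the critical step you propose is asserted rather than proved. You need MCM $R$-modules $M_i$ of rank $r_i$ with $\nu_R(M_i)/r_i \to e(R)$ for an arbitrary three-dimensional complete local domain in characteristic $p>0$. The ratio $\nu_R(R^{(e)})/\rank R^{(e)}$ converges to $e_{HK}(R)$, which is strictly less than $e(R)$ whenever $R$ is singular, so Frobenius pushforwards alone do not yield such a sequence; the phrase ``modify them by syzygy and quotient operations'' is not a construction, and no argument is given for why the resulting Ulrich defect should be $o(r_i)$ precisely when $\dim R \leq 3$. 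To my knowledge, lim Ulrich sequences are not known to exist for general three-dimensional complete local domains; the constructions in the literature (which postdate this paper) work in the standard graded setting. The paper sidesteps this problem entirely: it factors $R \to S$ as $R \to T \to S = T/J$ with $T/\m T$ a complete intersection and $J \subseteq \n_T^2$ a perfect ideal of height $c = \edim S - \edim R$ (Lemma~\ref{lemma--factoring maps with c.i. closed fibres}), proves $e(S) = e_{HK}(J, T/(\underline{x})T)$ for a carefully chosen minimal reduction $\underline{x}$ of $\n$ using Dutta multiplicities and local Chern characters (Lemma~\ref{lemma--multiplicity in terms of the limit} and Theorem~\ref{theorem--limit in terms of local chern characters}), and then bounds this below by $(2^c/c!)\,e(T) \geq (2^c/c!)\,e(R)$ via the general inequality $e_{HK} \geq e/\,\dim!$. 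This gives Theorem~\ref{theorem--inequalities on multiplicities in terms of difference of embdim}, which together with a Hanes-type estimate (Theorem~\ref{theorem--inequalities on multiplicities when the difference of embdim is large}) handling large $c$ yields $e(R) \leq \max\{1, d!/2^d\}\,e(S)$ and hence the dimension-three case.

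There is also a smaller flaw in your first reduction: you cut by a general $y \in \n$ required to be simultaneously superficial for $\n$ and a non-zerodivisor on $S/\m S$ so that $R \to S/yS$ remains flat and $e(S)$ is preserved. When $S/\m S$ has depth zero no such $y$ exists. The paper instead localizes $S$ at a minimal prime $Q$ of $\m S$ with $\dim S/Q = \dim S/\m S$, uses the localization theorem $e(S_Q) \leq e(S)$ (Theorem~\ref{theorem--localization theorem on multiplicities}), and re-completes.
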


In \cite{Lechnoteonmultiplicity}, Lech proved that, in general, we have $e(R)\leq d!\cdot e(S)$ for $(R,\m)\to (S,\n)$ flat local extension with $d=\dim R$. Our main technical result {\it greatly} generalizes this inequality in equal characteristic, and from which Theorem \ref{theorem--main theorem} follows immediately.
\begin{theorem}
\label{theorem--main technical theorem}
Let $(R,\m)\rightarrow (S,\n)$ be a flat local extension between local rings of equal characteristic. If $\dim R=d$, then we have $$e(R)\leq \max\{1,\frac{d!}{2^d}\}\cdot e(S).$$
\end{theorem}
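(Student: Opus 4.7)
The plan is to adapt Lech's original proof that $e(R)\le d!\cdot e(S)$, sharpening the constant by a factor of $2^d$ using tools specific to equal characteristic. Using Cohen's structure theorem together with the equal characteristic hypothesis, I would first reduce to the case where $R$ and $S$ are complete local rings with a common algebraically closed residue field, and $R$ contains a coefficient field. By killing a sufficiently general lifted system of parameters of the closed fibre $S/\m S$ --- a superficial reduction that does not raise $e(S)$ and preserves flatness over $R$ --- I would further reduce to the equidimensional case $\dim R=\dim S=d$, so that every system of parameters of $R$ is also a system of parameters of $S$. In equal characteristic zero a standard spreading-out and reduction-mod-$p$ argument lets us assume positive characteristic $p$, making Frobenius-theoretic tools (Hilbert--Kunz asymptotics, big Cohen--Macaulay algebras via Hochster--Huneke) available.

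Next I would invoke Lech's classical machinery. Pick a superficial system of parameters $x=(x_1,\ldots,x_d)$ of $R$ with $x_i\in\m^{n_i}$, so that $e(x;R)=n_1\cdots n_d\cdot e(R)$. Flatness gives
\[
\mathrm{length}_S(S/x^{(t)}S) \;=\; \mathrm{length}_R(R/x^{(t)})\cdot\mathrm{length}_S(S/\m S)
\]
for $x^{(t)}=(x_1^t,\ldots,x_d^t)$. Lech's limit formula gives $\mathrm{length}_R(R/x^{(t)})/t^d\to e(x;R)$ as $t\to\infty$. On the $S$-side one bounds $\mathrm{length}_S(S/x^{(t)}S)$ asymptotically by $e(S)$ times a polynomial in $t$ arising from the Hilbert--Samuel polynomial of $\n$, using the containment $x_i^t\in\n^{n_it}$. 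Dividing by $t^d$ and letting $t\to\infty$ recovers the classical $d!$ estimate, the $d!$ being the volume ratio between a simplex and a hypercube in $\bR^d$.

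The improvement to $d!/2^d$ would come from sharpening the $S$-side bound by decoupling the contributions of the $x_i^t$: instead of a single simplex-type estimate $\mathrm{length}_S(S/\n^T)$ with $T=(\sum n_i)t$, use a product of $d$ essentially independent one-variable bounds (one for each coordinate $x_i$), which gives a hypercube-type estimate producing the factor $d!/2^d$ after optimization. Making this rigorous in the non-Cohen--Macaulay setting is the main obstacle: the decoupling is clean only in the Cohen--Macaulay case, and the equal characteristic hypothesis is essential for absorbing the Cohen--Macaulay defect, either through big Cohen--Macaulay $R$-algebras (Hochster--Huneke in positive characteristic; reduction mod $p$ in characteristic zero) or by replacing classical powers $x^{(t)}$ with Frobenius powers $x^{[p^e]}$ and passing to the Hilbert--Kunz limit.

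A final subtlety concerns the $\max$ in the stated constant. For $d\le 3$ one has $d!/2^d<1$, but the literal inequality $e(R)\le(d!/2^d)e(S)$ cannot hold in general (taking $R=S$ forces equality of multiplicities), so the argument must combine with a separate bound $e(R)\le e(S)$ in low dimension. The $\max$ packages both regimes into a single statement, and the $d=3$ case yields Lech's conjecture in dimension three as stated in Theorem \ref{theorem--main theorem}.
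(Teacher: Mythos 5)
Your proposal identifies the right targets (reduce to complete, equidimensional, characteristic $p>0$, sharpen Lech's $d!$ by a factor of $2^d$) and your observation about why the $\max$ is unavoidable is correct, but the core of the argument --- where the $2^d$ improvement actually comes from --- is left as an unproved heuristic (``decoupling the contributions of the $x_i^t$'' to pass from a simplex-type bound to a hypercube-type bound). You yourself flag this as ``the main obstacle,'' and indeed nothing in the proposal makes it rigorous, even in the Cohen--Macaulay case. Invoking big Cohen--Macaulay algebras or Frobenius powers ``to absorb the Cohen--Macaulay defect'' names tools but does not supply an argument, and it is not clear how such a tool would produce the precise factor $2^d$.

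The paper's route is quite different and the $2^d$ has a concrete source there. First, one reduces as you suggest (Lemma~\ref{lemma--reduction}), but additionally to $R$ a domain. Then, instead of working directly with $R\to S$, one factors through a complete intersection fibre: Lemma~\ref{lemma--factoring maps with c.i. closed fibres} gives $R\to T\to S=T/J$ with $T/\m T$ a complete intersection, $J$ a \emph{perfect} ideal with $J\subseteq \n_T^2$, and $\pd_T S = c := \edim S - \edim R$. The crucial identity is $e(S)=e_{HK}(J, T/(\underline{x})T)$ for a carefully chosen system of parameters $\underline{x}$ of $S$ that is also part of a system of parameters of $T$ with $T_P$ Cohen--Macaulay at all minimal primes of $(\underline{x})$; this is the content of Lemma~\ref{lemma--multiplicity in terms of the limit} and Theorem~\ref{theorem--limit in terms of local chern characters}, and rests on Dutta multiplicities, local Chern characters, and the fact that complete intersections are numerically Roberts. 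Once you have this, $T/(\underline{x})T$ has dimension exactly $c$ (because $J$ is perfect), and $J\subseteq\n_T^2$ gives
\begin{equation*}
e_{HK}(J, T/(\underline{x})T)\ \geq\ e_{HK}(\n_T^2, T/(\underline{x})T)\ \geq\ \tfrac{1}{c!}\,e(\n_T^2, T/(\underline{x})T)\ =\ \tfrac{2^c}{c!}\,e(T/(\underline{x})T)\ \geq\ \tfrac{2^c}{c!}\,e(R).
\end{equation*}
This is Theorem~\ref{theorem--inequalities on multiplicities in terms of difference of embdim}, and note that the exponent and factorial appear in $c$, not $d$ --- the improvement is driven by $\edim S - \edim R$, not by $\dim R$. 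A second, complementary estimate (Theorem~\ref{theorem--inequalities on multiplicities when the difference of embdim is large}, based on Hanes-type length estimates plus the Hilbert--Kunz asymptotics) handles the regime $c\geq d$ and gives $e(R)\leq \frac{d!}{2^d}e(S)$. Combining the two over all possible values of $c$ yields $\max\{1, d!/2^d\}$. So the $\max$ is not, as you suggest, a patch for low dimension sewn together with a separate argument; it emerges from optimizing $\frac{c!}{2^c}$ over $0\leq c\leq d$. You should look at the role of the embedding-dimension gap $c=\edim S-\edim R$ and the factorization through a complete-intersection-fibre $T$ with $J\subseteq\n_T^2$: this is the missing idea that turns the heuristic ``gain $2$ per variable'' into a proof.
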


This paper is organized as follows: in Section 2 we start with general preliminaries on multiplicities, and we recall some important tools that will be employed. In Section 3 we prove a technical lemma on the structure of flat local homomorphism, Lemma \ref{lemma--factoring maps with c.i. closed fibres}, which is a key ingredient in later proofs. In Section 4 we prove two results on the behavior of Hilbert-Samuel multiplicities under faithfully flat extensions of local rings of characteristic $p>0$: Theorem \ref{theorem--inequalities on multiplicities in terms of difference of embdim} and Theorem \ref{theorem--inequalities on multiplicities when the difference of embdim is large}. Theorem \ref{theorem--main technical theorem} in characteristic $p>0$ then follows immediately by combining them. Finally in Section 5, we use reduction to characteristic $p>0$ to obtain Theorem \ref{theorem--main technical theorem} in characteristic $0$.

\subsection*{Acknowledgement}
First of all, it is my great pleasure to thank Mel Hochster for introducing Lech's conjecture to me and for many enjoyable discussions. In fact, the Gorenstein case of Theorem \ref{theorem--main theorem} in characteristic $p>0$ appeared in the last section of my doctorial thesis \cite{MaThesis} written under the direction of Mel. In addition, Mel also explained to me the ideas in the reduction to characteristic $p>0$ process in Section 5.

I would like to thank Craig Huneke and Bernd Ulrich for answering my questions, for their extremely helpful comments, and for their encouragements. In particular, in discussion with Craig Huneke, we established Lemma \ref{lemma--choose generically CM sop} which eventually leads to the current proof of Theorem \ref{theorem--inequalities on multiplicities in terms of difference of embdim}, and following the comments of Bernd Ulrich, our arguments in Section 3 and Section 4 are largely simplified and shortened. I also thank Kevin Tucker and Wenliang Zhang for their comments on Theorem \ref{theorem--limit in terms of local chern characters} and related results.

The main result in dimension three in characteristic $p>0$ was first announced at the Midwest Commutative Algebra Conference at Purdue University in August 2015. At that time, our method only works in dimension three and we need some extra mild assumptions on the residue field of $R$. Since then we have largely improved our techniques to obtain Theorem \ref{theorem--main technical theorem} and thus the general version of Theorem \ref{theorem--main theorem}. 

\section{Preliminaries on multiplicities}

Throughout this paper, $(R,\m)$ will always be a Noetherian local ring. In most cases, we will work with rings of equal characteristic, i.e., $R$ contains a field. We use $\nu_R(M)$ to denote the minimal number of generators of a module $M$ over $R$. If $M$ has finite length as an $R$-module, we use $l_R(M)$ to denote its length over $R$. We will drop the subscript and write $\nu(M)$, $l(M)$ when $R$ is clear from the context. We will use $\edim R$ to denote the embedding dimension of $R$, which is the same as $\nu_R(\m)$.

\subsection{Hilbert-Samuel multiplicity} For an $\m$-primary ideal $I$ of $R$ and a finitely generated $R$-module $M$, the {\it Hilbert-Samuel multiplicity} of $M$ with respect to $I$ can be defined as: $$e(I, M)=\lim_{t\to\infty}d!\cdot\frac{l_R(M/I^tM)}{t^d}$$ where $d=\dim R$. Of great importance is the case that $M=R$ and $I=\m$, where we just write $e(R)$ for the multiplicity $e(\m, R)$.

The Hilbert-Samuel multiplicity is a classical invariant that measures the singularity of $R$ (and of $M$). In general, $e(I,M)$ is always an integer and it is positive if and only if $\dim M=d$. The multiplicity $e(I, -)$ is additive on short exact sequences: if $M$ has a finite filtration by $\{M_i\}$, then $e(I, M)=\sum_ie(I, M_i)$. A rather non-trivial result is the following localization formula, which appeared in \cite{Nagata62}, and also follows from more general results on Hilbert functions \cite{Lechinequalitiesofflatcouples}, \cite{BennettOnthecharacteristicfunctionsofalocalRing}.
\begin{theorem}[localization theorem for multiplicities]
\label{theorem--localization theorem on multiplicities}
If $P$ is a prime ideal of an excellent local ring $R$ (e.g., a complete local ring $R$) such that $\dim R/P+\height P=\dim R$, then we have $e(R_P)\leq e(R)$.
\end{theorem}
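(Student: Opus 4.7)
My proof plan is based on Lech's associativity formula for Hilbert--Samuel multiplicities, combined with a careful choice of parameters in general position. I would first perform two standard reductions. Extending to $R(t) := R[t]_{\m R[t]}$ gives a faithfully flat extension with Artinian fibre, preserving all multiplicities and dimensions, so I may assume the residue field $k = R/\m$ is infinite. Next, by killing minimal primes of less-than-maximal dimension and invoking additivity of multiplicity $e(R) = \sum_{\mathfrak{q}} l(R_{\mathfrak{q}}) e(R/\mathfrak{q})$ (summed over top-dimensional minimal primes of $R$), I reduce to the equidimensional case; the hypothesis $\dim R/P + \height P = \dim R$ for $P$ is preserved.

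Set $h = \height P$ and $s = d - h = \dim R/P$. The cases $h = 0$ (immediate from additivity) and $h = d$ (trivial, $P = \m$) are easy, so assume $0 < h < d$. Since $\Frac(R/P)$ is the residue field of $R_P$ and is infinite, I choose $x_1, \ldots, x_h \in P$ such that $(x_1, \ldots, x_h)R_P$ is a minimal reduction of $PR_P$, so that $e((x_1, \ldots, x_h)R_P, R_P) = e(R_P)$. By Krull's height theorem, $\height(x_1, \ldots, x_h) = h$ and $P$ is a minimal prime of $(x_1, \ldots, x_h)$ in $R$. I then extend to a system of parameters $x_1, \ldots, x_h, y_1, \ldots, y_s$ for $R$, choosing the $y_j$'s generically so that their images reduce $\m/P$ minimally in $R/P$ and, crucially, so that $I := (x_1, \ldots, x_h, y_1, \ldots, y_s)$ is a minimal reduction of $\m$ in $R$, giving $e(I, R) = e(R)$.

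Now I apply Lech's associativity formula, valid in our excellent (hence catenary) setting:
$$e(I, R) = \sum_{\mathfrak{p}} e\bigl((x_1, \ldots, x_h), R_{\mathfrak{p}}\bigr) \cdot e\bigl((y_1, \ldots, y_s), R/\mathfrak{p}\bigr),$$
where the sum runs over primes $\mathfrak{p}$ minimal over $(x_1, \ldots, x_h)$ with $\dim R/\mathfrak{p} = s$. By construction $P$ is one such prime, so extracting only the $P$-term from this sum of nonnegative summands yields $e(I, R) \geq e(R_P) \cdot e(R/P)$. Combined with $e(I, R) = e(R)$ and $e(R/P) \geq 1$, this produces $e(R) \geq e(R_P)$.

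The main obstacle is arranging the three minimal-reduction conditions compatibly: the $x_i$'s reducing $PR_P$ in $R_P$, the images of the $y_j$'s reducing $\m/P$ in $R/P$, and the entire tuple reducing $\m$ in $R$. This is a multi-layered general-position argument driven by the infinite residue field. The delicate case is when $(P + \m^2)/\m^2$ has $k$-dimension less than $h$ (for instance when $P \subseteq \m^2$); then no minimal reduction of $\m$ can have $h$ of its generators in $P$, and the argument must be supplemented either by induction on $d - h$ (passing to $R/(y)$ for a superficial element $y \in \m \setminus P$) or by a direct analysis of the mixed Hilbert function $l\bigl(R/(P^n + \m^m)\bigr)$ in the spirit of Bennett.
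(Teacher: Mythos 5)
The paper does not prove this theorem: it is stated as background, attributed to Nagata \cite{Nagata62}, and noted to follow also from Hilbert-function results of Lech \cite{Lechinequalitiesofflatcouples} and Bennett \cite{BennettOnthecharacteristicfunctionsofalocalRing}. So the only question is whether your argument is complete, and it is not --- the gap you flag at the end is the actual content of the theorem.

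Your reductions (to $k$ infinite via $R(t)$, and to equidimensional $R$ via the additivity formula, using that $R/\mathfrak{q}$ is catenary) are fine, and in the case where you can simultaneously arrange $(x_1,\dots,x_h)R_P$ to be a reduction of $PR_P$ and $I=(x_1,\dots,x_h,y_1,\dots,y_s)$ to be a reduction of $\m$, the associativity formula plus nonnegativity of each summand does give $e(R)=e(I,R)\ge e(R_P)\cdot e((y_1,\dots,y_s),R/P)\ge e(R_P)$. But a minimal reduction of $\m$ has generators whose images are linearly independent in $\m/\m^2$, so $h$ of them can lie in $P$ only if $\dim_k\bigl((P+\m^2)/\m^2\bigr)\ge h$; when $P\subseteq\m^2$ this fails for every $h\ge1$, and there is no reason for the hypothesis to hold after your reductions. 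Even when that dimension count is favorable, you have not argued that a generic choice inside $(P+\m^2)/\m^2$ extends to a homogeneous system of parameters of $\gr_\m R$, which is the precise condition for $(x_1,\dots,x_h)$ to be extendable to a minimal reduction of $\m$ --- ``general position'' does not settle this, since you are constraining the first $h$ initial forms to a subspace that the structure of $\gr_\m R$ knows nothing about. Neither proposed repair is carried out: for the superficial-element induction, the image of $P$ in $R/(y)$ need not be prime, and relating $e(R_P)$ to multiplicities of the quotient is exactly where Nagata's proof does the hard work; and appealing to Bennett's theorem on Hilbert functions under localization is not an alternative route, it \emph{is} the theorem via one of the paper's own citations. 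The write-up is a plausible outline for the easy case together with an honest acknowledgment that the hard case is unresolved; for the purposes of this paper the correct move is the one the author makes, namely to cite Nagata, Lech, and Bennett rather than reprove the localization theorem.
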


The above discussion allows us to prove some reductions on Lech's conjecture. The following result is well known to experts and we include a short proof here for completeness.
\begin{lemma}
\label{lemma--reduction}
Let $(R,\m)\to (S,\n)$ be a flat local map with $\dim R=d$. In order to prove Lech's conjecture that $e(R)\leq e(S)$, or more generally, to prove $e(R)\leq C\cdot e(S)$ for certain constant $C$ depending only on $d$, it suffices to prove the case where $\dim S=\dim R=d$, $R$ and $S$ are both complete, $R$ is a domain, and $S$ has algebraically closed residue field.
\end{lemma}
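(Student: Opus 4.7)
The plan is to perform four standard reductions in sequence, each preserving any inequality of the form $e(R) \leq C \cdot e(S)$ in which $C$ depends only on $d$. First, replace $S$ by its $\n$-adic completion $\hat S$; since $\m S \subseteq \n$, $\hat S$ is also $\m$-adically complete as an $R$-module, so $R \to \hat S$ factors uniquely through $\hat R$, and $\hat R \to \hat S$ is flat local by a standard application of the local criterion for flatness together with flatness of completion. Dimension and multiplicity are preserved, so we may assume $R$ and $S$ are complete.

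Second, reduce $R$ to a domain. With $R$ complete it has finitely many minimal primes, and the associativity formula gives $e(R) = \sum_P \ell(R_P) e(R/P)$ summed over minimal primes $P$ with $\dim R/P = d$. Each base change $R/P \to S/PS$ is flat local with $R/P$ a domain. The key identity $\sum_P \ell(R_P) e(S/PS) = e(S)$ follows from the analogous associativity for $S$: every maximal-dimension minimal prime $Q$ of $S$ contracts (by going-down and a dimension count) to some such $P$, and the flat local extension $R_P \to S_Q$ of $0$-dimensional local rings satisfies $\ell(S_Q) = \ell(R_P) \cdot \ell(S_Q / PS_Q)$. Summing the presumed inequality $e(R/P) \leq C \cdot e(S/PS)$ with weights $\ell(R_P)$ then yields $e(R) \leq C \cdot e(S)$.

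Third, make the residue field of $S$ algebraically closed via Nagata's ``gonflement'': iteratively pass from $(S, \n)$ to the $\n$-adic completion of $S[Z]_{\n S[Z]}$ (producing a purely transcendental residue extension by $Z$) and to $S[Z]/(f(Z))$ with $f$ a monic lift of an irreducible polynomial over the residue field (producing a finite algebraic extension); these faithfully flat local extensions preserve dimension and multiplicity, and $R$ maps flat-locally to the enlarged $S$ at every stage. Fourth, if $r := \dim S / \m S > 0$, by prime avoidance choose $x \in \n$ that is a nonzerodivisor on $S$, a nonzerodivisor on $S / \m S$, and superficial for $\n$; the local criterion then makes $R \to S/xS$ flat local with $\dim S/xS = \dim S - 1$, and superficiality gives $e(S/xS) = e(S)$. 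Iterate $r$ times to arrive at $\dim S = d$.

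The principal obstacle is step four: producing $x$ requires avoiding $\Ass(S) \cup \Ass_S(S/\m S)$ as well as a finite collection of proper subspaces of $\n/\n^2$ (coming from superficiality). This demands both that $\n$ itself not be an associated prime of $S$ or of $S/\m S$ --- a depth condition --- and that the residue field of $S$ be infinite. Consequently it is essential to perform the residue-field enlargement of step three before the dimension reduction, and in low-depth cases one may further need to enlarge $S$, for example by passing to a power series extension $S[[Z_1, \ldots, Z_t]]$, to arrange positive depth.
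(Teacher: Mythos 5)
Your steps 1--3 (completion, domain reduction, residue-field enlargement by gonflement) are correct and essentially match the paper, though you reorder them: the paper reduces $\dim S$ to $d$ before the domain reduction, whereas you do the domain reduction first via the associativity formula with length weights. That reordering is sound --- the identity $\ell_{S_Q}(S_Q) = \ell_{R_P}(R_P)\cdot \ell_{S_Q}(S_Q/PS_Q)$ for the $0$-dimensional flat local pair $R_P \to S_Q$, combined with the dimension count $\dim S/PS = \dim R/P + \dim S/\m S$, makes the two associativity formulas mesh even when $\dim S > d$.

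The genuine departure, and the gap, is step 4. The paper reduces $\dim S$ to $d$ in one stroke by localizing $S$ at a minimal prime $Q$ of $\m S$ with $\dim S/Q = \dim S/\m S$ and invoking the localization theorem for multiplicities (Theorem \ref{theorem--localization theorem on multiplicities}) to get $e(S_Q) \le e(S)$; this requires only that $S$ be excellent, which holds since $S$ is complete. Your replacement by iterated cuts along superficial elements requires, at each stage, an element $x$ that is simultaneously superficial for $\n$ and a nonzerodivisor on the closed fiber $S/\m S$ (the latter is what delivers flatness of $R \to S/xS$ and the dimension drop). When $\depth S/\m S = 0$ but $\dim S/\m S > 0$, no such $x$ exists, and you correctly flag this. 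However, your proposed repair --- pass to $S[[Z_1,\dots,Z_t]]$ --- does not actually fix it: adjoining $t$ power-series variables raises $\depth S/\m S$ by $t$ but raises $\dim S/\m S$ by $t$ as well, so after the $r+t$ cuts needed to reach dimension $d$ the closed-fiber depth has been drained back to $\depth(S/\m S) - r$, which is negative unless $S/\m S$ was Cohen--Macaulay to begin with. (Requiring $x$ also to be a nonzerodivisor on $S$, as you do, adds a further unnecessary constraint for $\dim S \geq 2$, but the fatal constraint is the one on the closed fiber.) So the superficial-element route does not close; you would either need to assume $S/\m S$ Cohen--Macaulay --- which is false in general even after completion and residue-field enlargement --- or fall back on the localization theorem, which handles all cases uniformly and is what the paper actually uses.
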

\begin{proof}
As completion preserves flatness and the multiplicities, we can replace $R$ and $S$ by their completions to assume both $R$ and $S$ are complete. We can choose a minimal prime $Q$ of $\m S$ such that $\dim S/Q=\dim S/\m S$. Since $R\to S$ is faithfully flat, $\dim S=\dim S/\m S+\dim R$ and hence $$\dim S\geq \dim S/Q+\height Q\geq \dim S/\m S+\dim R=\dim S.$$ Now by Theorem \ref{theorem--localization theorem on multiplicities}, we have $e(S_Q)\leq e(S)$. Thus, if $e(R)\leq e(S_Q)$ (resp., $e(R)\leq C\cdot e(S_Q)$) then $e(R)\leq e(S)$ (resp., $e(R)\leq C\cdot e(S)$) as well. It follows that we may replace $S$ by $S_Q$ and assume $\dim S=\dim R=d$. We may lose completeness, but we can complete again. Next, we can give a filtration of $R$ by prime cyclic modules $R/P_i$, $1\leq i\leq h$. We then have $e(R)=\sum_{\dim R/P_i=d}e(R/P_i)$. After tensoring with $S$ we get a corresponding filtration of $S$ by modules $S/P_iS$, and $e(S)=\sum_{\dim S/P_iS=d}e(S/P_iS)$. Since $S$ is faithfully flat over $R$, the values of $i$ such that $\dim R/P_i=d$ are precisely those such that $\dim S/P_iS=d$. Therefore, by considering each $R/P_i\to S/P_iS$ with $\dim R/P_i=\dim R=d$, we reduce to the case that $\dim S=\dim R=d$, $R$ and $S$ are both complete, and $R$ is a domain. Finally, we can take a flat local extension $(S,\n) \to (S',\n')$ such that $\n'=\n S'$ and $S'/\n'$ is the algebraic closure of $S/\n$ (such $S'$ always exists: it is a suitable {\it gonflement} of $S$; see \cite{BourbakiBookChapter89}).\footnote{In equal characteristic, we can simply pick a coefficient field $L$ of $S$ and set $S'=S\widehat{\otimes}_L\overline{L}$.} Replacing $S$ by $S'$ and completing $S'$ if necessary, we get the desired reduction.
\end{proof}

\begin{remark}
We do not know whether Lech's conjecture can be reduced to the case that $R$ has algebraically closed (or perfect) residue field in general: we can only do this when $R$ has equal characteristic $0$ (see the proof of Lemma \ref{lemma--reduction to module finite} and Remark \ref{remark--reduction to module finite not clear in general} for further discussion on this). It is also not clear to us whether we can assume $S$ is a domain, or even reduced.
\end{remark}

If two $\m$-primary ideals $I$ and $J$ in $R$ have the same integral closure, then $e(I, M)=e(J, M)$. This is quite useful because it reduces the computation of multiplicities to the case where $I$ is a parameter ideal: when $R/\m$ is an infinite field, every $\m$-primary ideal $I$ is integral over an ideal generated by a system of parameters $\underline{x}=x_1,\dots,x_d$, which is called a {\it minimal reduction} of $I$. In particular, we have $e(I, M)=e(\underline{x}, M)$. The latter one can be computed using the Euler characteristic of the Koszul complex on $\underline{x}$: $$e(\underline{x}, M)=\chi(\underline{x}, M)=\chi(K_\bullet(\underline{x}, M))=\sum_{i=0}^d(-1)^il_R(H_i(\underline{x}, M)).$$ We will use this formula repeatedly throughout the paper. Let us also mention that one defines the higher Euler characteristic by $\chi_j(\underline{x}, M)=\sum_{i=j}^d(-1)^{j-i}l_R(H_i(\underline{x}, M))$.

\subsection{Frobenius map and the Hilbert-Kunz multiplicity} In this subsection we always assume $(R,\m)$ has equal characteristic $p>0$. We will use $R^{(e)}$ to denote the target ring of the $e$-th Frobenius map $F^e$: $R\rightarrow R$. If $M$ is an $R$-module, we will use $M^{(e)}$ to denote the corresponding module over $R^{(e)}$. We shall let $F^e_R(-)$ denote the Peskine-Szpiro's Frobenius functor from $R$-modules to $R$-modules (we will write $F^e(-)$ if $R$ is clear from the context). In detail, $F^e_R(M)$ is given by base change to $R^{(e)}$ and then identifying $R^{(e)}$ with $R$.

We say $R$ is {\it $F$-finite} if $R^{(1)}$ (or equivalently, every $R^{(e)}$) is finitely generated as an $R$-module. It is well known that, when $(R,\m)$ is complete, $R$ is $F$-finite if and only if its residue field $K=R/\m$ is $F$-finite, i.e., $[K:K^p]<\infty$. We denote $\alpha(R)=\log_p[K:K^p]$. In particular, if $(R,\m)$ has perfect residue field, then $\alpha(R)=0$. If $M$ is an $R$-module of finite length, then $l_R(M^{(e)})=p^{e\cdot\alpha(R)}l_R(M)$. A result of Kunz \cite[Proposition 2.3]{KunzOnNoetherianRingsOfCharP} shows that, when $R$ is $F$-finite, $\alpha(R_P)=\alpha(R)+\dim R/P$. 

For an $\m$-primary ideal $I$ of $R$ and a finitely generated $R$-module $M$, it was shown by Monsky \cite{MonskyTheHilbertKunzfunction} that the following limit $$e_{HK}(I, M)=\lim_{e\to\infty}\frac{l_R(M/I^{[p^e]}M)}{p^{ed}}$$ exists, and this is called the {\it Hilbert-Kunz multiplicity} of $M$ with respect to $I$. When $I=\m$ and $M=R$, we simplify our notation and set $e_{HK}(R)$ to be $e_{HK}(\m, R)$. It is straightforward to see that, when $(R,\m)$ is $F$-finite, we have $$e_{HK}(I,R)=\lim_{e\to\infty}\frac{l_{R}(F^e(R/I))}{p^{ed}}=\lim_{e\to\infty}\frac{l_R(R^{(e)}/IR^{(e)})}{p^{e(d+\alpha(R))}}.$$  It is worth to pointing out that if $(R,\m)$ is a local ring of dimension $d$ and $\underline{x}=x_1,\dots,x_d$ is system of parameters of $R$, then $e_{HK}(\underline{x}, R)=e(\underline{x}, R)$. However, computations of Hilbert-Kunz multiplicities of arbitrary $\m$-primary ideals have proved quite difficult. In general, we only know that if $I$ is an $\m$-primary ideal, then the multiplicities $e_{HK}(I, R)$ and $e(I, R)$ are related by the inequalities: $$\frac{e(I, R)}{d!}\leq e_{HK}(I, R)\leq e(I, R),$$ and this inequality is the best possible  in general \cite{WatanabeYoshidaHilbertKunzmultiplicity}. It is perhaps also worth remarking that, although the Hilbert-Kunz multiplicity is in general hard to study, the analog statement of Lech's conjecture for Hilbert-Kunz multiplicity is known to be true. To be more precise, if $(R,\m)\rightarrow (S,\n)$ is a flat local extension between local rings of characteristic $p>0$, then $e_{HK}(R)\leq e_{HK}(S)$ \cite{KunzOnNoetherianRingsOfCharP}, \cite{HanesThesis}. This immediately gives $$e(R)\leq d!\cdot e_{HK}(R)\leq d!\cdot e_{HK}(S)\leq d!\cdot e(S)$$ where $d=\dim R$. Thus we quickly recovered Lech's result $e(R)\leq d!\cdot e(S)$ in characteristic $p>0$ (and hence in equal characteristic, see Section 5). Our main result, Theorem \ref{theorem--main technical theorem}, improved the constant $d!$ to $\max\{1, (d!/2^d)\}$.

\subsection{Local Chern characters and Dutta multiplicity} Local Chern characters were first introduced in \cite{BaumFultonMacPhersonRiamannRochforsingularvarieties}, and a good description of them can be found in \cite[Chapter 18]{FultonIntersectiontheory}. In this paper we will need their connections with Dutta multiplicities as developed in \cite{RobertsIntersectionTheoremsMSRI}, \cite{RobertsMultiplicitiesandChernclassinLocalAlgebra}, and further extended in \cite{KuranoNumericalequivalenceonChowgroupsoflocalrings}. Below we present an outline of this theory for local rings, which will be suffices for our applications in Section 4.

Let $(R,\m)$ be a complete local ring (of arbitrary characteristic) of dimension $d$. Let $G_\bullet$ be a bounded complex of finite free $R$-modules. Let $Z$ be the support of $G_\bullet$: this is the closed subset of $\Spec R$ consisting of those primes $P$ for which the localization of $G_\bullet$ at $P$ is not exact. We denote the {\it local Chern character} of $G_\bullet$ by $\chern(G_\bullet)$. This is a sum of components:
$$\chern(G_\bullet)=\chern_0(G_\bullet)+\chern_1(G_\bullet)+\cdots+\chern_d(G_\bullet).$$
For each integer $i$, $\chern_i(G_\bullet)$ defines, for each integer $k$, a homomorphism of $\mathbb{Q}$-modules from $A_k(X)_\mathbb{Q}$ to $A_{k-i}(Z)_\mathbb{Q}$, where $A_k(X)_\mathbb{Q}$ is the $k$-th component of the Chow group with rational coefficients. These operators satisfy a lot of properties. We refer to \cite[Page 422]{RobertsIntersectionTheoremsMSRI} for a good list, \cite{RobertsMultiplicitiesandChernclassinLocalAlgebra} and \cite{FultonIntersectiontheory} for more details and proofs. Of great importance to us here is the local Riemann-Roch formula (see \cite[Section 12.6]{RobertsMultiplicitiesandChernclassinLocalAlgebra} or \cite[Example 18.3.12]{FultonIntersectiontheory} for more general versions). This formula states that for every finitely generated $R$-module $M$ there is an element $\tau(M)$ in $A_*(\Spec R)$: $$\tau(M)=[M]_d+[M]_{d-1}+\cdots+[M]_0$$ such that for every $G_\bullet$ with homology of finite length (i.e., $H_i(G_\bullet)$ is supported only at $\m$ for every $i$), we have $$\chi(G_\bullet\otimes M)=\chern_d(G_\bullet)[M]_d+\chern_{d-1}(G_\bullet)[M]_{d-1}+\cdots+\chern_0(G_\bullet)[M]_0.$$
In particular, if $G_\bullet$ is a bounded complex (of finite free $R$-modules) with homology of finite length, then
\begin{equation}
\label{equation--chi in terms of local RR}
\sum_i (-1)^il_R(H_i(G_\bullet))=\chi(G_\bullet)=\sum_{j=0}^d\chern_j(G_\bullet)[R]_j.
\end{equation}

One crucial and ingenious observation of Roberts (see \cite{RobertsIntersectionTheoremsMSRI} or \cite[Theorem 12.7.1]{RobertsMultiplicitiesandChernclassinLocalAlgebra}) is that, when $R$ has characteristic $p>0$, we have
\begin{equation}
\label{equation--dutta multiplicity in terms of chern characters}
\chi_\infty(G_\bullet):=\lim_{e\to\infty}\sum_{i}(-1)^i\frac{l_R(H_i(F^e(G_\bullet)))}{p^{ed}}=\chern_d(G_\bullet)[R]_d,
\end{equation}
where $\chi_\infty(G_\bullet)$ is called the {\it Dutta multiplicity} of the complex $G_\bullet$, which was first introduced and studied by Dutta \cite{DuttaFrobeniusandMultiplicities}. This is one of the key ingredients in the solution of the new intersection theorem in mixed characteristic.

Comparing (\ref{equation--chi in terms of local RR}) and (\ref{equation--dutta multiplicity in terms of chern characters}), we see that $\chi(G_\bullet)$ and $\chi_\infty(G_\bullet)$ differs by $\sum_{j=0}^{d-1}\chern_j(G_\bullet)[R]_j$. We say $R$ is a {\it numerically Roberts ring} if $\chi(G_\bullet)=\chi_{\infty}(G_\bullet)$ for any such $G_\bullet$. This notion was formally introduced and studied intensively in \cite{KuranoNumericalequivalenceonChowgroupsoflocalrings}.\footnote{The definition given here is taken from Theorem 6.4 in \cite{KuranoNumericalequivalenceonChowgroupsoflocalrings}, in general, one defines numerically Roberts ring in arbitrary characteristic by letting $\sum_{j=0}^{d-1}\chern_j(G_\bullet)[R]_j=0$ for every $G_\bullet$ bounded complex of finite length homology \cite[Definition 6.1]{KuranoNumericalequivalenceonChowgroupsoflocalrings}.} Let us point out that any two-dimensional equidimensional complete local ring is numerically Roberts by \cite[Example 6.6]{KuranoNumericalequivalenceonChowgroupsoflocalrings}. In higher dimension, it is well known that complete intersections are always numerically Roberts \cite{DuttaFrobeniusandMultiplicities} (see \cite[Remark 6.9]{KuranoNumericalequivalenceonChowgroupsoflocalrings}). This would be our main applications. However, we also mention that there exist Cohen-Macaulay rings of dimension three and Gorenstein rings of dimension five in characteristic $p>0$ that are not numerically Roberts \cite{RobertsIntersectionTheoremsMSRI}, \cite{SinghandMillerIntersectionmultiplicitiesoverGorensteinrings}.

Dutta multiplicities have deep connections with the Hilbert-Kunz multiplicities. We want to sketch this relation briefly. We recall the following important result, which first appeared in the main theorem of \cite{RobertsIntersectionTheoremsMSRI}. A stronger form of this theorem was also obtained in \cite[Theorem 6.2]{HochsterHunekePhantomhomology} using tight closure. Both ideas of the proofs can be traced back to \cite{DuttaFrobeniusandMultiplicities}.
\begin{theorem}
\label{theorem--vanishing of higher Koszul}
Let $(R,\m)$ be a complete local ring of characteristic $p>0$. Let $G_\bullet$ be a bounded complex of finite free $R$-modules of length $d=\dim R$ with homology of finite length. Then for every $i\geq 1$ we have $$\lim_{e\to\infty}\frac{l_R(H_i(F^e(G_\bullet)))}{p^{ed}}=0.$$
\end{theorem}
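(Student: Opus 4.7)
The plan is to combine the local Riemann--Roch formula with an induction on dimension, using either Frobenius-functoriality of local Chern characters (following Roberts) or phantom acyclicity from tight closure theory (following Hochster--Huneke) to control the higher homologies individually, not just through their alternating sum.

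First I would apply local Riemann--Roch to $F^e(G_\bullet)$ for each $e$, obtaining
$$\chi(F^e(G_\bullet)) = \sum_{j=0}^d \chern_j(F^e(G_\bullet))[R]_j.$$
The crucial input is the behaviour of local Chern characters under Frobenius: $\chern_j(F^e(G_\bullet)) = p^{ej}\chern_j(G_\bullet)$, a form of compatibility with the Adams operation $\psi^{p^e}$. This identifies $\chi(F^e(G_\bullet))$ as a polynomial in $p^e$ of degree at most $d$, with leading coefficient the Dutta multiplicity $\chi_\infty(G_\bullet) = \chern_d(G_\bullet)[R]_d$. In particular $\chi(F^e(G_\bullet))$ is of size $O(p^{ed})$; this is consistent with the alternating sum of the $l_R(H_i(F^e(G_\bullet)))$ being $O(p^{ed})$ but does not by itself bound the individual terms.

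Next I would proceed by induction on $d = \dim R$, the base case $d=0$ being trivial because $G_\bullet$ is then concentrated in degree $0$ and has no higher homology. For the inductive step, the most transparent route is via the phantom acyclicity of Hochster--Huneke: since $G_\bullet$ is acyclic on the punctured spectrum of $R$ and the ranks of the differentials of $G_\bullet$ satisfy the Buchsbaum--Eisenbud conditions generically, tight closure theory produces a single test element $c \in R$ (a non-zerodivisor modulo each minimal prime) such that $c \cdot H_i(F^e(G_\bullet)) = 0$ for every $i \geq 1$ and every $e \geq 0$. Consequently each $H_i(F^e(G_\bullet))$ is a finite-length module over $R/cR$, a ring of dimension at most $d-1$, and standard Frobenius length estimates on rings of dimension $\leq d-1$ give $l_R(H_i(F^e(G_\bullet))) = O(p^{e(d-1)})$. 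Dividing by $p^{ed}$ yields the claim.

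The main obstacle is producing the uniform annihilator $c$ that works for all iterates $F^e$ simultaneously. This uniformity is essentially the content of phantom acyclicity and uses the full strength of tight closure theory (existence of test elements, persistence under Frobenius, and the tight closure of parameter-like ideals in complexes). An alternative is to establish the Frobenius-functoriality of the $\chern_j$ directly by intersection-theoretic means and then separate the individual $l_R(H_i(F^e(G_\bullet)))$ from the alternating sum through a support-filtration argument, which decomposes the cycle classes $[R]_j$ according to the support stratification of $G_\bullet$; this is Roberts's original route. Either way the technical heart lies in passing from the Euler-characteristic identity to termwise bounds, after which the remaining length estimate and limit are routine.
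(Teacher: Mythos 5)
The paper does not actually prove this theorem; it explicitly cites it from the main theorem of Roberts's MSRI notes and from Hochster--Huneke's phantom homology paper, so there is no proof in the paper to compare against. Your sketch correctly identifies the two standard routes (Roberts via Frobenius-functoriality of local Chern characters, Hochster--Huneke via phantom acyclicity/tight closure) and correctly observes that the local Riemann--Roch identity bounds only the alternating sum, not the individual terms.

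There is, however, a genuine gap in the tight-closure route as you describe it. Producing a uniform annihilator $c$ with $c\cdot H_i(F^e(G_\bullet))=0$ for all $i\ge 1$ and all $e$ only makes $H_i(F^e(G_\bullet))$ an $R/cR$-module; that alone gives no bound on its length, since a finite-length module over the $(d-1)$-dimensional ring $R/cR$ can have arbitrary length. What one actually needs is to realize $H_i(F^e(G_\bullet))$ inside the homology of the Frobenius iterates of a \emph{fixed} complex over $R/cR$. The standard mechanism is the long exact sequence attached to $0\to F^e(G_\bullet)\xrightarrow{\,c\,}F^e(G_\bullet)\to F^e_{R/cR}(\bar G_\bullet)\to 0$ (so $c$ must be a nonzerodivisor on $R$, which is already a mild reduction since the theorem allows non-reduced $R$): once $c$ kills $H_i(F^e(G_\bullet))$, this LES yields an injection $H_i(F^e(G_\bullet))\hookrightarrow H_i(F^e_{R/cR}(\bar G_\bullet))$, and one then needs the (nontrivial, separate) lemma that $l\big(H_i(F^e_{A}(K_\bullet))\big)=O(p^{e\dim A})$ for any bounded finite free complex $K_\bullet$ with finite-length homology over any $d'$-dimensional local ring $A$. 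Note that your proposed induction does not directly apply here either, because $\bar G_\bullet$ has length $d$ over the $(d-1)$-dimensional ring $R/cR$, which is outside the hypotheses of the theorem being proved; the weaker $O(p^{e\dim})$ lemma is what is really needed. Finally, the existence of a uniform annihilator via test elements and phantom acyclicity itself requires hypotheses (reducedness/equidimensionality, standard rank and height conditions) that must be arranged by preliminary reductions the sketch does not address.
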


Now, suppose we have a bounded complex $G_\bullet$ of length exactly $d=\dim R$, with homology of finite length. It then follows from Theorem \ref{theorem--vanishing of higher Koszul} that $$\chi_\infty(G_\bullet)=\lim_{e\to\infty}\frac{l_R(H_0(F^e(G_\bullet)))}{p^{ed}}.$$ Therefore, if we also have $H_0(G_\bullet)=R/I$, then $\chi_\infty(G_\bullet)=e_{HK}(I, R)$. In particular, if $(R,\m)$ is a numerically Roberts ring and $I$ is an $\m$-primary ideal of $R$ of finite projective dimension (this implies $R$ is Cohen-Macaulay by the new intersection theorem \cite{RobertsIntersectionTheoremsMSRI}), then $$\lim_{e\to\infty}\frac{l(R/I^{[p^e]})}{p^{e\cdot \dim R}}=l(R/I).$$ We will use similar ideas repeatedly in Section 4. We also refer to \cite[Section 6]{KuranoNumericalequivalenceonChowgroupsoflocalrings} for more general results of this type.

\section{Structure of flat local maps}
Our goal in this section is to prove Lemma \ref{lemma--factoring maps with c.i. closed fibres}, which will be used in Section 4. We first recall the main theorem from \cite{AvramovFoxbyHerzogStructureoflocalmap}, which can be viewed as a natural generalization of Cohen's structure theorem for complete local rings.
\begin{theorem}[Cohen factorizations]
\label{theorem--Cohen factorization}
A local homomorphism $(R,\m)\rightarrow (S,\n)$ with $S$ complete can be factored as $(R,\m)\rightarrow (T,\n_T)\rightarrow (S,\n)$ such that $(R,\m)\rightarrow (T,\n_T)$ is flat local with $T/\m T$ regular, $(T,\n_T)$ is complete, and $(T,\n_T)\rightarrow (S,\n)$ is surjective.

Moreover, if $S$ has finite flat dimension over $R$ (e.g., $S$ is flat over $R$), then $S$ has finite projective dimension over $T$.
\end{theorem}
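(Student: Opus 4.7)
The strategy is to construct $T$ explicitly as a completion of a polynomial extension of $R$, mimicking Cohen's structure theorem. First I would reduce to the case that $R$ is complete: if $\widehat{R}\to T\to S$ is a Cohen factorization of $\widehat{R}\to S$, then $R\to T\to S$ also works for $R\to S$, since $R\to\widehat{R}$ is flat and $\m\widehat{R}$ is the maximal ideal of $\widehat{R}$. After replacing $R$ by a suitable gonflement $R^{\sharp}$ (as in Bourbaki, invoked in Lemma \ref{lemma--reduction}) so that $R^{\sharp}/\m R^{\sharp}=S/\n$ and $R\to S$ factors compatibly through $R^{\sharp}$, pick elements $y_1,\dots,y_n\in\n$ lifting generators of the maximal ideal of the complete local ring $S/\m R^{\sharp}S$. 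Set $T$ to be the $(\m R^{\sharp},Y_1,\dots,Y_n)$-adic completion of $R^{\sharp}[Y_1,\dots,Y_n]$, equipped with the map $T\to S$ sending $Y_i\mapsto y_i$. Flatness of $R\to T$ follows because each intermediate step---gonflement, polynomial extension, localization, and completion of a Noetherian local ring---is flat; the closed fiber $T/\m T\cong (S/\n)[[Y_1,\dots,Y_n]]$ is a formal power series ring, hence regular; and $T\twoheadrightarrow S$ is surjective because $S$ is complete, the residue field extension is surjective by construction, and the $y_i$ generate $\n$ modulo $\m S$, permitting successive approximation modulo powers of $\n$.

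For the ``moreover'' clause, suppose $R\to S$ has finite flat dimension $n$. Since $R\to T$ is flat, any $T$-free resolution $P_{\bullet}\to S$ is simultaneously an $R$-flat resolution of $S$, so
\[
\Tor_q^T(S,T/\m T)\;=\;H_q(P_{\bullet}\otimes_R R/\m)\;=\;\Tor_q^R(S,R/\m),
\]
which vanishes for $q>n$. Now apply the change-of-rings spectral sequence associated to the surjection $T\twoheadrightarrow T/\m T$:
\[
E_2^{p,q}\;=\;\Tor_p^{T/\m T}\bigl(\Tor_q^T(S,T/\m T),\,T/\n_T\bigr)\;\Longrightarrow\;\Tor_{p+q}^T(S,T/\n_T).
\]
Because $T/\m T$ is regular, $\Tor_p^{T/\m T}(-,T/\n_T)=0$ for $p>\dim(T/\m T)$, and combined with the vanishing above, this forces $\Tor_i^T(S,T/\n_T)=0$ for $i>n+\dim(T/\m T)$. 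By Auslander--Buchsbaum--Serre, $\pd_T S<\infty$.

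The main obstacle lies in managing the residue field extension $R/\m\hookrightarrow S/\n$: the naive construction $T=\widehat{R[Y_1,\dots,Y_n]_{(\m,Y)}}$ has residue field $R/\m$ and cannot surject onto $S$ when this inclusion is strict. Handling this via a gonflement is delicate, especially in mixed characteristic, because one must simultaneously ensure that the extension $R\to R^{\sharp}$ admits a factorization of $R\to S$. In the equal characteristic setting relevant for the applications in this paper, the compositum of coefficient fields works directly. A secondary subtlety is the rigorous verification of surjectivity of $T\to S$, which reduces via Cohen's structure theorem applied to $S/\m R^{\sharp} S$ to convergence of formal power series inside the complete ring $S$.
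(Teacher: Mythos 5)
The paper does not prove this theorem; it simply recalls it as a known result from the reference \cite{AvramovFoxbyHerzogStructureoflocalmap}, so there is no ``paper proof'' to compare against. Evaluating your argument on its own: the ``moreover'' part is correct and is essentially the standard argument. You correctly use that a $T$-free resolution of $S$ is $R$-flat to identify $\Tor_q^T(S,T/\m T)$ with $\Tor_q^R(S,R/\m)$, and the change-of-rings spectral sequence over the regular ring $T/\m T$ then bounds $\Tor_i^T(S,T/\n_T)$; the only slip is that the last step is the Tor characterization of projective dimension, not Auslander--Buchsbaum--Serre.

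The construction of $T$, however, has a genuine gap. Your approach requires a gonflement $R^\sharp$ of $R$ with $R^\sharp/\m R^\sharp = S/\n$ \emph{through which $R\to S$ factors}, and you claim that in equal characteristic ``the compositum of coefficient fields works directly.'' This is false in equal characteristic $p$, which is precisely the characteristic in which the paper needs Theorem \ref{theorem--Cohen factorization}. Take $K=\mathbb{F}_p(t)$, $R=K$, and $S=K[z]/\big((z^p-t)^2\big)$, a complete (Artinian) local ring with residue field $L=K(t^{1/p})$. A direct Frobenius computation shows $t$ has no $p$-th root in $S$, so the image of $K$ in $S$ is not contained in any coefficient field of $S$, and there is no $K$-algebra homomorphism $L\to S$ at all. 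Consequently the gonflement $R^\sharp=L$ admits no factoring $R\to R^\sharp\to S$, and the ring $T=R^\sharp[[Y_1,\dots,Y_n]]$ you propose cannot map to $S$. A Cohen factorization still exists here, for instance $T=\widehat{K[z]_{(z^p-t)}}$, a complete DVR surjecting onto $S$; but observe that although $T$ has residue field $L$, the subfield $K\subset T$ is \emph{not} contained in any coefficient field of $T$, so $T$ is not of the form $R^\sharp[[Y]]$ for any gonflement $R^\sharp$ of $R$.

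The structural point you are missing is exactly what makes the Avramov--Foxby--Herzog construction work: instead of first enlarging the residue field by a gonflement (which forces you to realize algebraic, possibly inseparable, extensions inside $S$) and then adjoining power-series variables, one adjoins \emph{polynomial} indeterminates $X_\lambda$ over $R$ for lifts of field generators of $S/\n$ over $R/\m$ together with indeterminates $Y_i$ for generators of $\n$ modulo $\m S+\n^2$, localizes the polynomial ring at the contraction of $\n$, and completes. Since one never imposes an algebraic relation on the $X_\lambda$, a map to $S$ always exists, and the closed fiber is automatically a regular local ring---but typically of dimension strictly larger than $\edim(S/\m S)$, which your construction does not allow. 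Your approach does work in equal characteristic $0$, where every subfield of a complete local ring extends to a coefficient field, but the theorem is used by the paper in characteristic $p>0$, so this restriction is not acceptable.
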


A {\it Cohen-factorization} as in Theorem \ref{theorem--Cohen factorization} is called {\it minimal} if, with $S=T/J$, we have $J\subseteq \m T+\n_T^2$. This can be reduced from any Cohen-factorization by killing part of a regular system of parameters on $T/\m T$ that is contained in $J$ but not contained in $\n_T^2(T/\m T)$ \cite[Proposition 1.5]{AvramovFoxbyHerzogStructureoflocalmap}. However in this paper, to get the desired inequalities on multiplicities, we need to factor the map $(R,\m)\to (T, \n_T) \to (S,\n)=T/J$ such that $\pd_TS<\infty$ and $J\subseteq \n_T^2$. We cannot always achieve this while keeping $T/\m T$ regular. Our key observation here is that, at least for flat local extensions of rings of the same dimension, we can achieve this at the expense of letting $T/\m T$ be a complete intersection. To establish such a factorization we first recall two classical and crucial results.

\begin{theorem}[Section 21 of \cite{Matsumura70}]
\label{theorem--flat map with regular fibre} Let $(R,\m)\rightarrow (T,\n_T)$ be a flat local map. If $x_1,\dots,x_t$ is a regular sequence in $T/\m T$, then it is a regular sequence on $T/IT$ for every $I\subseteq R$, and $T/(x_1,\dots,x_t)T$ is faithfully flat over $R$.
\end{theorem}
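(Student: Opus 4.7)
The plan is to combine an induction on $t$ with the local criterion of flatness, reducing everything to the base case $t=1$ and $I=0$: if $x \in T$ has image $\bar x \in T/\m T$ a non-zerodivisor, then $x$ is a non-zerodivisor on $T$ and $T/xT$ is flat (hence faithfully flat, since $R \to T/xT$ is local) over $R$.

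Granting this base case, the full statement follows quickly. For general $t$ with $I=0$, induct on $t$: set $T' := T/(x_1,\ldots,x_{t-1})T$; by induction $T'$ is faithfully flat over $R$ with closed fibre $(T/\m T)/(\bar x_1,\ldots,\bar x_{t-1})$, on which $\bar x_t$ is a non-zerodivisor, so the base case applied to $R \to T'$ and $x_t$ concludes. For arbitrary $I\subseteq R$, the induced map $R/I \to T/IT$ is again a flat local map (by base change) whose closed fibre is still $T/\m T$, so $\bar x_1,\ldots,\bar x_t$ remains a regular sequence on it; applying the already-proven $I=0$ case to $R/I \to T/IT$ then yields regularity of $x_1,\ldots,x_t$ on $T/IT$.

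The substantive step is thus the base case. Here I would apply the local criterion of flatness: $T/xT$ is $R$-flat iff $\Tor_1^R(R/\m, T/xT)=0$. To obtain this vanishing, consider the Koszul complex $K_\bullet(x;T) : 0 \to T \xrightarrow{x} T \to 0$, whose terms are $R$-flat, so $H_i(K_\bullet(x;T)\otimes_R R/\m) = H_i(K_\bullet(\bar x; T/\m T))$, which vanishes for $i\ge 1$ by hypothesis on $\bar x$. The hyperhomology spectral sequence
\[
E^2_{p,q} = \Tor_p^R(R/\m,\, H_q K_\bullet(x;T)) \Longrightarrow H_{p+q}\bigl(K_\bullet(x;T)\otimes_R R/\m\bigr)
\]
then forces, in total degrees $\le 1$, both $(0:_T x)/\m(0:_T x) = 0$ and $\Tor_1^R(R/\m, T/xT) = 0$. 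Since $(0:_T x)$ is a finitely generated $T$-module and $\m T \subseteq \n_T$, Nakayama over $T$ upgrades the first vanishing to $(0:_T x)=0$, so $x$ is a non-zerodivisor on $T$; the second vanishing gives the required $R$-flatness of $T/xT$.

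The main delicacy is that the regularity of $x$ on $T$ and the flatness of $T/xT$ have to be extracted simultaneously from the fibre data, since each is formally tied to the other via the Koszul-complex homology. This is precisely the content of the classical local flatness criterion (cf.\ Matsumura's \emph{Commutative Ring Theory}, Theorem 22.5), and in the plan above it essentially functions as a citation; all subsequent assertions of the lemma are straightforward consequences via induction and base change.
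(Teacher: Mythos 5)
The paper does not prove this statement at all --- it simply cites Section~21 of Matsumura --- so there is no in-paper proof to compare against. Your reduction to the base case ($t=1$, $I=0$) by induction on $t$ and then by base change along $R\to R/I$ for general $I\subseteq R$ is correct and clean, and the base case you arrive at is indeed precisely the content of Matsumura's Theorem~22.5.

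However, the way you read off the two vanishings from the hyperhomology spectral sequence is not quite right as written. In total degree~$1$ you do get $E^\infty_{1,0}=E^\infty_{0,1}=0$, and $E^\infty_{1,0}=E^2_{1,0}=\Tor_1^R(R/\m,\,T/xT)$ because no nonzero differentials enter or leave the $(1,0)$ spot; so $\Tor_1^R(R/\m,\,T/xT)=0$ is a legitimate conclusion. But $E^\infty_{0,1}$ is only the cokernel of $d_2\colon E^2_{2,0}=\Tor_2^R(R/\m,\,T/xT)\to E^2_{0,1}=(0:_T x)/\m(0:_T x)$, so $E^\infty_{0,1}=0$ by itself does \emph{not} force $E^2_{0,1}=0$: a priori the class could be killed by the incoming $d_2$. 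The repair is to reorder the deductions: first conclude $\Tor_1^R(R/\m,\,T/xT)=0$, then invoke the local criterion of flatness (applicable because $T/xT$ is a finitely generated module over the Noetherian local ring $T$ with $\m T\subseteq \n_T$) to get that $T/xT$ is $R$-flat; this makes $\Tor_2^R(R/\m,\,T/xT)=0$, so $d_2=0$, hence $(0:_T x)/\m(0:_T x)=E^2_{0,1}=E^\infty_{0,1}=0$, and Nakayama over $T$ then gives $(0:_T x)=0$. With that reordering the base case --- and therefore your whole argument --- goes through; as it stands, the sentence claiming the spectral sequence ``forces'' both vanishings simultaneously elides the $d_2$ issue.
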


\begin{lemma}[Theorem 27.3 of \cite{Nagata62}]
\label{lemma--killing linear term preserves finite projective dimension}
Let $(T,\n)$ be a local ring and $M$ be a finitely generated $T$-module such that $\pd_TM<\infty$. If $x\in \Ann_RM$ such that $x\notin \m^2\cup(\cup_{P\in\Ass(T)}P)$, then $\pd_{T/xT}M<\infty$.
\end{lemma}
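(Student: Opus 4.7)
The plan is to reduce to the case that $T$ is $\n$-adically complete (faithful flatness of $T \to \widehat T$ preserves the hypotheses on $x$ as well as the projective dimensions in question), and then induct on $n = \pd_T M$. The base case $n = 0$ is immediate: a free $T$-module killed by a non-zero-divisor must be zero.

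For the inductive step, I would form a minimal short exact sequence $0 \to K \to F \to M \to 0$ with $F$ free, so that $\pd_T K = n-1$. Since $xM = 0$ we have $xF \subseteq K$, and this gives a short exact sequence of $T/xT$-modules
$$0 \to K/xF \to F/xF \to M \to 0,$$
in which $K/xF$ is genuinely a $T/xT$-module because $xK \subseteq xF$, and $F/xF$ is free over $T/xT$. It therefore suffices to show $\pd_{T/xT}(K/xF) < \infty$. When $n \geq 2$, the short exact sequence $0 \to xF \to K \to K/xF \to 0$ of $T$-modules, combined with $xF \cong F$ free (since $x$ is $T$-regular on $F$) and $\pd_T K = n-1$, yields $\pd_T(K/xF) \leq \max(1, n-1) = n-1$; the induction hypothesis then applies, since all conditions on $x$ persist with $K/xF$ in place of $M$.

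The main obstacle is the case $n = 1$: here the analogous bound gives only $\pd_T(K/xF) \leq 1 = n$, so the induction does not improve anything, and the hypothesis $x \notin \n^2$ must be brought in. In this case $K$ and $F$ are both free of the same rank $r$ (since $M$, being $x$-torsion, has rank zero), the minimal presentation matrix $A$ has entries in $\n$, and the inclusion $xF \subseteq K$ provides an $r \times r$ matrix $B$ with $AB = xI$. From $\det A \cdot \det B = x^r$ being a non-zero-divisor one sees that $A$ is injective, so $A(BA - xI) = (AB)A - xA = 0$ forces $BA = xI$; thus $(A,B)$ is a matrix factorization of $x$ and $M = \coker A$. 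The key computation is the expansion $\sum_k a_{ik} b_{ki} = x$ with every $a_{ik} \in \n$: if every $b_{ki}$ also lay in $\n$, the left-hand side would lie in $\n^2$, contradicting $x \notin \n^2$. Hence some entry of $B$ is a unit, and a standard row/column reduction (simultaneous changes of basis in $F$ and $K$) splits off a rank-one trivial block $(x,1)$ from $(A,B)$, whose cokernel over $T/xT$ is a free $T/xT$-module. Iterating this reduction, $(A,B)$ is equivalent to $(xI, I)$ over $T$, so $M \cong (T/xT)^r$ is free over $T/xT$, giving $\pd_{T/xT} M = 0$.

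The hard part will be the bookkeeping in the matrix-factorization reduction in the $n=1$ step, but the essential mechanism is clean and transparent: the hypothesis $x \notin \n^2$ prevents $B$ from having all entries in $\n$, which at every stage forces a trivial rank-one factor to split off, and it is in this role that the linearity hypothesis on $x$ enters decisively.
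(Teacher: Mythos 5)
The paper states this lemma as a citation to Nagata's \emph{Local Rings} (Theorem 27.3) and does not reproduce a proof, so there is no in-paper argument to compare against; I have therefore checked your proposal on its own terms, and it is correct. (Your initial reduction to the $\n$-adically complete case is harmless but is never actually used; every step works over an arbitrary Noetherian local ring.) The inductive step for $n = \pd_T M \geq 2$ is sound: from the minimal presentation $0 \to K \to F \to M \to 0$ and the containment $xF \subseteq K$ you get the $T/xT$-exact sequence $0 \to K/xF \to F/xF \to M \to 0$ with $F/xF$ free, while the $T$-exact sequence $0 \to xF \to K \to K/xF \to 0$ (with $xF$ free, $\pd_T K = n-1$) gives $\pd_T(K/xF) \leq n-1$, so the induction hypothesis applies to $K/xF$. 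The real content is the $n=1$ case, and your matrix-factorization argument there is right: $K$ and $F$ are free of equal rank $r$ because $M$ vanishes at minimal primes ($x$ is a nonzerodivisor and $xM=0$), the presentation matrix $A$ has entries in $\n$ by minimality, the containment $xF\subseteq K$ produces $B$ with $AB=xI$ and hence $BA=xI$ by injectivity of $A$, and the diagonal identity $\sum_k a_{ik}b_{ki}=x$ together with $x\notin\n^2$ forces a unit entry in $B$, after which the block-splitting of the matrix factorization proceeds exactly as you describe, terminating with $(A,B)\sim(xI,I)$ and $M\cong(T/xT)^r$. You have correctly identified that $x\notin\n^2$ enters precisely at this point; indeed, the lemma fails without it (take $T=k[[y]]$, $x=y^2$, $M=k$). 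Note also that your base case shows the stronger fact that $M$ is $T/xT$-free when $\pd_T M = 1$, and with a little more care the induction yields the exact formula $\pd_{T/xT}M = \pd_T M - 1$, which is the precise form of Nagata's theorem; the paper only needs finiteness.
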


It should be pointed out that Lech proved in \cite{Lechinequalitiesofflatcouples} that for every flat local extension $(R,\m)\to (S,\n)$, we always have $\edim R-\dim R\leq \edim S-\dim S$. In particular, we have $\edim S-\edim R\geq 0$ (note that in general, $\edim S-\edim R$ is not the same as $\edim (S/\m S)$). Below we give a short proof of a more general fact regarding local maps of finite flat dimension. This also answers a question in \cite[Remark 6.3]{AvramovFoxbyHalperinDecentandascent}.

\begin{theorem}
\label{theorem--regularity defect of flat local extension}
Let $(R,\m)\rightarrow (S,\n)$ be a local map such that $S$ has finite flat dimension over $R$ (e.g., $S$ is flat over $R$), then we have
\begin{equation*}
\edim R-\dim R\leq \edim S-\dim S.
\end{equation*}
In particular, if $\dim S\geq \dim R$ (e.g., $S$ is flat over $R$), then $\edim S-\edim R\geq 0$.
\end{theorem}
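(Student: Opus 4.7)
The plan is to combine Cohen factorization with an induction built on Lemma \ref{lemma--killing linear term preserves finite projective dimension}. First, since completion preserves $\dim$, $\edim$, and the property of finite flat dimension, we may assume $S$ is complete. By Theorem \ref{theorem--Cohen factorization} we factor the given map as $(R,\m)\to (T,\n_T)\to (S,\n)$ with $R\to T$ flat local, $T/\m T$ regular, $T$ complete, $T\to S$ surjective, and $\pd_T S<\infty$. Since $T/\m T$ is regular and $R\to T$ is flat, we have $\dim T=\dim R+\dim T/\m T$ and $\edim T=\edim R+\dim T/\m T$, so $\edim T-\dim T=\edim R-\dim R$. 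It therefore suffices to establish the following reduced statement: \emph{for a surjective local map $T\twoheadrightarrow S=T/J$ with $\pd_T S<\infty$, $\edim T-\dim T\leq \edim S-\dim S$.}

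I would prove this by induction on $n:=\edim T-\edim S\geq 0$. In the base case $n=0$ we have $J\subseteq \n_T^2$, and $\dim T\geq \dim S$ immediately yields the claim. For the inductive step $n\geq 1$, the strategy is to find $x\in J$ with $x\notin \n_T^2$ and $x$ a non-zero-divisor on $T$; given such $x$, Lemma \ref{lemma--killing linear term preserves finite projective dimension} (applied with $M=S$ and using that $x\in J=\Ann_T S$) gives $\pd_{T/(x)} S<\infty$, while $x$ being linear and a non-zero-divisor forces $\edim T/(x)=\edim T-1$ and $\dim T/(x)=\dim T-1$. Applying the inductive hypothesis to $T/(x)\twoheadrightarrow S$ then concludes the argument.

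The main obstacle is producing $x$. After the innocuous base change $T\leadsto T[X]_{\n_T T[X]}$, which preserves $\dim$, $\edim$, and the projective dimension of $S$, we may assume that the residue field of $T$ is infinite, so that vector-space prime avoidance in $\n_T/\n_T^2$ is available. Since $J\neq 0$ and $\pd_T S<\infty$ force $\pd_T S\geq 1$, Auslander--Buchsbaum gives $\depth T\geq 1$, so $\n_T\notin \Ass T$. The needed $x$ then exists provided $J\not\subseteq P$ for every $P\in \Ass T$, and this last containment is the crux: if it failed, localizing at such a $P$ would give $\depth T_P=0$, and Auslander--Buchsbaum applied to $(T/J)_P$ (whose projective dimension over $T_P$ is still finite) would force $\pd_{T_P}(T/J)_P=0$, making $(T/J)_P$ a cyclic free $T_P$-module and hence $JT_P=0$; a careful examination of the minimal free resolution of $T/J$ localized at $P$ then rules this out as incompatible with $J\neq 0$.

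The hardest step is the last verification that $J\not\subseteq P$ for any associated prime $P$ of $T$, which is essentially a rigidity statement for cyclic modules of finite projective dimension; everything else is bookkeeping with Cohen factorizations and the structure of the closed fibre, plus the inductive machine driven by Lemma \ref{lemma--killing linear term preserves finite projective dimension}.
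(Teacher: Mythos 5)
Your proposal follows the same road map as the paper's proof: reduce to the complete case, invoke the Cohen factorization (Theorem \ref{theorem--Cohen factorization}), observe that $\edim T - \dim T = \edim R - \dim R$, and then repeatedly kill nonzerodivisors $x\in J\setminus\n_T^2$ using Lemma \ref{lemma--killing linear term preserves finite projective dimension}. Your decision to package the reduction as an induction on $\edim T - \edim S$ is a clean reformulation of what the paper does (the paper instead proves the bound $\edim T\leq \edim S + \depth_J T$ and then appeals to $\depth_J T\leq \dim T - \dim S$; your version sidesteps the depth bookkeeping by tracking the invariant $\edim T-\dim T$ directly through the steps). That part is fine, and the base case $J\subseteq \n_T^2$ is handled correctly.

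The genuine gap is in your production of the element $x$. You need $J\not\subseteq P$ for every $P\in\Ass T$ (i.e.\ $J$ contains a nonzerodivisor), and your argument for this is incomplete. Localizing at such a $P$ and invoking Auslander--Buchsbaum does show $\pd_{T_P}(T/J)_P=0$ and hence $J_P=0$; but $J_P=0$ together with $J\neq 0$ is \emph{not} a contradiction by itself (a nonzero ideal can easily vanish after localizing at a prime in its complement of support), and ``a careful examination of the minimal free resolution localized at $P$'' does not rescue this directly, because once you localize at a prime $P\neq \n_T$ the complex $(F_\bullet)_P$ is no longer minimal — the differential entries lie in $\n_T T_P = T_P$, not in $PT_P$. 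To turn your local computation into a contradiction you would have to bring in the \emph{rank} of $T/J$: the finite free resolution gives a well-defined rank, each $(T/J)_{P'}$ for $P'\in\Ass T$ is free over $T_{P'}$ of that same rank, so $J_P=0$ forces rank $1$, hence $J_{P'}=0$ for \emph{all} $P'\in\Ass T$, hence $\Ann_T J$ meets the complement of $\bigcup_{P'\in\Ass T}P'$, i.e.\ $\Ann_T J$ contains a nonzerodivisor $t$; then $tJ=0$ with $t$ a nonzerodivisor gives $J=0$. You are essentially reproving \cite[Corollary~20.13]{Eisenbud95}, which is exactly the result the paper cites at this step. Either supply that rank argument or simply cite the Auslander--Buchsbaum result as the paper does. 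Two minor further comments: the base change to $T[X]_{\n_T T[X]}$ is unnecessary, since the standard form of prime avoidance already allows one ideal (here $\n_T^2$) in the union to be non-prime; and the deduction $\depth T\geq 1$ is not needed for the prime-avoidance step once you have $J\not\subseteq P$ for all $P\in\Ass T$.
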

\begin{proof}
We may assume both $R$ and $S$ are complete. By Theorem \ref{theorem--Cohen factorization}, we have $$(R,\m)\rightarrow (T,\n_T) \rightarrow (S,\n)$$ such that $R\rightarrow T$ is faithfully flat with $T/\m T$ regular, and $S=T/J$ with $\pd_TS<\infty$. It is easy to see that $\edim R-\dim R=\edim T-\dim T$, so it suffices to prove $\edim T-\dim T\leq \edim S-\dim S$ when $\pd_TS<\infty$. We claim that:
\begin{equation}
\label{equation--inequality of embedding dimension and depth}
\edim T\leq \edim S+\depth_JT.
\end{equation}

This is easy if $J\subseteq \n_T ^2$, because in this case we have $\edim S=\edim T$, so (\ref{equation--inequality of embedding dimension and depth}) holds trivially. Now assume $0\neq J\nsubseteq \n_T ^2$. Since $\pd_TS<\infty$, we know that $J$ contains a nonzerodivisor of $T$ \cite[Corollary 20.13]{Eisenbud95}. Thus by prime avoidance, there exists $x\in J$ such that $x\notin \n_T ^2 \cup(\cup_{P\in\Ass T} P)$. Let $\overline{T}=T/xT$ and $\overline{J}=J\overline{T}$. We still have $S=\overline{T}/\overline{J}$ with $\pd_{\overline{T}}S<\infty$ by Lemma \ref{lemma--killing linear term preserves finite projective dimension}. Moreover, $\edim T$ drops by one while $\edim S$ stays the same. But we can do this process at most $\depth_JT$ times (we either end up with $J=0$ or we stop at some point with $J\subseteq \n_T^2$), thus (\ref{equation--inequality of embedding dimension and depth}) follows.

Since we always have $\depth_JT \leq \dim T-\dim T/J=\dim T- \dim S$. Combining this with (\ref{equation--inequality of embedding dimension and depth}) we thus get $\edim T-\dim T\leq \edim S-\dim S$. This finishes the proof.
\end{proof}

Recall that for a local ring $T$ and an ideal $J\subseteq T$, we always have
\begin{equation}
\label{equation--inequality on depth, height and projdim}
\depth_JT\leq \height{J}\leq \dim T-\dim (T/J)\leq \pd_T(T/J).
\end{equation}
The first two inequalities are trivial, while the third inequality is a consequence of the celebrated new intersection theorem \cite{RobertsIntersectionTheoremsMSRI}. An ideal $J\subseteq T$ is called {\it perfect} if $\pd_T(T/J)=\depth_JT$, and hence for perfect ideals all the inequalities in (\ref{equation--inequality on depth, height and projdim}) are equalities.

We next prove the following:

\begin{lemma}
\label{lemma--perfect ideal}
Let $(R,\m)\to (S,\n)$ be a flat local map between complete local rings of the same dimension. Suppose we have a factorization $$(R,\m)\to (T,\n_T)\to (S,\n)$$ such that $R\to T$ is flat local, and $S=T/J$ with $\pd_TS<\infty$.  Then $J$ is a perfect ideal in $T$.
\end{lemma}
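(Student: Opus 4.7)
My strategy is to identify $\pd_T S$, $\height J$, and $\depth_J T$ all with the dimension of the closed fiber $T/\m T$. The key intermediate step is to show that $T/\m T$ is Cohen-Macaulay.

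First I would compute $\pd_T S$ using the Auslander-Buchsbaum formula applied to $T\to S$. Because $R\to S$ is flat with $\dim R=\dim S$, the closed fiber $S/\m S$ is zero-dimensional, so by the standard identity $\depth T=\depth R+\depth(T/\m T)$ for flat local extensions (and its analogue for $S$, in which the second summand vanishes), one gets $\depth T-\depth S=\depth(T/\m T)$. Since $\n_T S=\n$, the depths of $S$ as a $T$-module and as an $S$-module coincide, so Auslander-Buchsbaum yields $\pd_T S=\depth(T/\m T)$. On the other hand, the chain of inequalities \eqref{equation--inequality on depth, height and projdim} (the last step being the new intersection theorem) gives $\pd_T S\geq\dim T-\dim S$; and $\dim T-\dim S=\dim(T/\m T)$ by the dimension formula for flat extensions combined with $\dim R=\dim S$. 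Together with the trivial bound $\depth(T/\m T)\leq\dim(T/\m T)$, all of these quantities collapse: setting $t:=\pd_T S=\depth(T/\m T)=\dim(T/\m T)$, the ring $T/\m T$ is Cohen-Macaulay.

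Next I would produce a regular sequence of length $t$ inside $J$. The image $(J+\m T)/\m T$ of $J$ in $T/\m T$ is primary to the maximal ideal, because $T/(J+\m T)=S/\m S$ has dimension $0$; so in the Cohen-Macaulay ring $T/\m T$ its grade is exactly $t$. I would pick a regular sequence $\bar y_1,\dots,\bar y_t$ on $T/\m T$ inside this ideal, lift each $\bar y_i$ to some $y_i\in J$ (any element of $(J+\m T)/\m T$ is the image of an element of $J$), and invoke Theorem \ref{theorem--flat map with regular fibre} to conclude that $y_1,\dots,y_t$ is a regular sequence on $T$. This shows $\depth_J T\geq t$, while $\depth_J T\leq\height J\leq\dim T-\dim S=t$ holds automatically; together these force $\depth_J T=t=\pd_T S$, so $J$ is perfect.

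The main difficulty is the first step; once $T/\m T$ is Cohen-Macaulay, the lifting argument is routine. The subtle point is to exploit the hypothesis $\dim R=\dim S$ to make both closed fibers zero-dimensional in the appropriate senses, which is what makes the Auslander-Buchsbaum comparison land exactly at $\depth(T/\m T)$ and what allows $(J+\m T)/\m T$ to be a maximal-primary ideal in the closed fiber.
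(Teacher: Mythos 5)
Your proof is correct and follows essentially the same route as the paper: apply Auslander--Buchsbaum to get $\pd_T S=\depth(T/\m T)$, note that $(J+\m T)/\m T$ is primary to the maximal ideal of $T/\m T$, lift a maximal regular sequence on $T/\m T$ to elements of $J$ that are $T$-regular via Theorem~\ref{theorem--flat map with regular fibre}, and close with $\depth_J T\leq\pd_T S$. Your detour through the Cohen--Macaulayness of $T/\m T$ (which invokes the new intersection theorem) is a valid and pleasant observation but is not needed: the grade of the $\n_T$-primary ideal $(J+\m T)/\m T$ in $T/\m T$ is already $\depth(T/\m T)$ without any CM hypothesis, which is all the paper uses.
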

\begin{proof}
Since $R\to S$ is flat local with $\dim R=\dim S$, we know that $\m S$ is $\n$-primary and $\depth R=\depth S$. By the Auslander-Buchsbaum formula, we have $$\pd_TS+\depth S=\depth T=\depth R+\depth T/\m T.$$
Therefore we get $\pd_TS=\depth T/\m T$. Since $\m S$ is $\n$-primary, $\m T+J$ is $\n_T$-primary. Thus we know that $J\cdot(T/\m T)=(J+\m T)/\m T$ is $\n_T$-primary in $T/\m T$. Hence we can pick $y_1,\dots, y_n \in J$ such that $y_1,\dots,y_n$ form a regular sequence on $T/\m T$ with $n=\depth T/\m T$. By Theorem \ref{theorem--flat map with regular fibre}, $y_1,\dots, y_n$ is a regular sequence on $T$. This implies $$\depth_JT\geq n=\depth T/\m T=\pd_TS.$$ Since the other direction always holds by (\ref{equation--inequality on depth, height and projdim}), we have $\pd_TS=\depth_JT$ and thus $J$ is perfect.
\end{proof}

We are ready to state and prove our lemma on factoring flat local maps that will be used in Section 4. This lemma is also of independent interest.

\begin{lemma}
\label{lemma--factoring maps with c.i. closed fibres}
Let $(R,\m)\to (S,\n)$ be a flat local map between complete local rings of the same dimension. Suppose $\edim S-\edim R=c$. Then this map can be factored as $$(R,\m)\rightarrow (T,\n_{T})\rightarrow (S,\n)=T/J$$ such that the following are satisfied:
\begin{enumerate}
\item $(R,\m)\rightarrow (T,\n_{T})$ is flat local with $(T,\n_T)$ complete and $T/\m T$ a complete intersection;
\item $J$ is a perfect ideal and $\pd_TS=c$;
\item $J\subseteq \n_T^2$.
\end{enumerate}
\end{lemma}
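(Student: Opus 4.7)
The plan is to start with a minimal Cohen factorization of $R\to S$ and then cut $T_0$ down by a regular sequence inside $J_0$ that simultaneously kills the ``linear part'' of $J_0$ in the cotangent space. By Theorem \ref{theorem--Cohen factorization} together with the minimality reduction recalled immediately after it, I would first write $R\to T_0\to S=T_0/J_0$ with $R\to T_0$ flat local, $T_0$ complete, $T_0/\m T_0$ regular of dimension $n:=\dim T_0-\dim R$, $\pd_{T_0}S<\infty$, and $J_0\subseteq \m T_0+\n_{T_0}^2$. Since $\dim R=\dim S$ forces $\dim(S/\m S)=0$, the image $\overline{J_0}$ of $J_0$ in $T_0/\m T_0$ is $\n_{T_0/\m T_0}$-primary, hence of height $n$. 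Setting $V=\n_{T_0}/\n_{T_0}^2$ and $U=(J_0+\n_{T_0}^2)/\n_{T_0}^2\subseteq V$, the regularity of $T_0/\m T_0$ gives $\edim T_0=\edim R+n$, and comparing with $\edim S=\edim T_0-\dim_k U$ yields $\dim_k U=n-c$.

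The heart of the proof is to find $y_1,\ldots,y_{n-c}\in J_0$ satisfying \textbf{(a)} $\overline{y_1},\ldots,\overline{y_{n-c}}$ form a regular sequence in $T_0/\m T_0$, and \textbf{(b)} their images in $V$ form a basis of $U$. Condition (a) is achievable because $\overline{J_0}$ is an $\n_{T_0/\m T_0}$-primary ideal of height $n\geq n-c$ in the Cohen--Macaulay ring $T_0/\m T_0$, so regular $(n-c)$-tuples inside $\overline{J_0}$ are plentiful. Condition (b) amounts to requiring that a certain $(n-c)\times r$ scalar matrix (of coefficients expressing the $y_i$'s as combinations of a minimal generating set $a_1,\ldots,a_r$ of $J_0$) has rank $n-c$, using that the images $\overline{a_j}$ in $V$ span $U$. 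Both are nonempty Zariski open conditions on the coefficient matrix, so a generic choice satisfies both simultaneously, provided the residue field is infinite. In the applications (Section~4) this is free by Lemma \ref{lemma--reduction}, which lets us assume $S/\n$ is algebraically closed; in general, a preliminary gonflement enlarging the residue field handles the issue.

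Given such $y_i$'s, set $T=T_0/(y_1,\ldots,y_{n-c})$ and $J=J_0\cdot T$. By (a) and Theorem \ref{theorem--flat map with regular fibre}, $R\to T$ is flat local with $T$ complete, and $T/\m T=(T_0/\m T_0)/(\overline{y_1},\ldots,\overline{y_{n-c}})$ is a complete intersection, establishing (1). By (b), the images of the $y_i$ in $V$ span $U$, so the image of $J_0$ in $\n_T/\n_T^2$ vanishes, i.e., $J\subseteq\n_T^2$, establishing (3). Iteratively applying Lemma \ref{lemma--killing linear term preserves finite projective dimension}---each $y_i$ is a nonzerodivisor on $T_0/(y_1,\ldots,y_{i-1})$ by Theorem \ref{theorem--flat map with regular fibre} and (a), and outside the square of the maximal ideal of $T_0/(y_1,\ldots,y_{i-1})$ by (b)---yields $\pd_T S<\infty$. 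Lemma \ref{lemma--perfect ideal} then gives $J$ perfect with $\pd_T S=\depth(T/\m T)=\dim(T/\m T)=c$, establishing (2).

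The main obstacle will be the coordination of (a) and (b): the regular-sequence condition lives on the closed fibre $T_0/\m T_0$, while the spanning condition lives on the cotangent space $V$, and both must be arranged by a single choice of $y_i\in J_0$. The lifts of $\overline{y_i}\in\overline{J_0}$ back to $J_0$ are not unique, and corrections by $J_0\cap \n_{T_0}^2$ leave the image in $V$ unchanged but alter $\overline{y_i}$ only by the (possibly proper) subideal of $\overline{J_0}$ coming from $J_0\cap\n_{T_0}^2$; hence the cleanest route is the generic linear combination described above, which sidesteps these lifting issues at the price of needing an infinite residue field. Once (a) and (b) are met, everything else is routine bookkeeping using the structural tools of minimal Cohen factorizations.
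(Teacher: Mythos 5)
Your proposal follows the same overall strategy as the paper: start from a Cohen factorization $R\to T_0\to S=T_0/J_0$, cut $T_0$ down by elements of $J_0$ chosen to simultaneously form a regular sequence on $T_0/\m T_0$ (so that $T/\m T$ becomes a complete intersection and flatness is preserved via Theorem~\ref{theorem--flat map with regular fibre}) and kill the ``linear part'' of $J_0$ (so that $J\subseteq\n_T^2$), and then invoke Lemma~\ref{lemma--killing linear term preserves finite projective dimension} and Lemma~\ref{lemma--perfect ideal} to conclude. The verification of (1)--(3) once the $y_i$'s exist is essentially identical in both proofs.

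The genuine difference is in how the $y_i$'s are constructed, and this is where there is a gap. The paper constructs them one at a time by prime avoidance: at each step it shows $\overline{J}\nsubseteq \n_{\overline{T}}^2\cup(\cup_{P\in\Ass\overline{T}/\m\overline{T}}P)$ directly (using that $\m\overline{T}+\overline{J}$ is $\n_{\overline{T}}$-primary and the associated primes have positive dimension), which works over an arbitrary residue field. You instead argue that conditions (a) and (b) are nonempty Zariski-open on the space of $(n-c)\times r$ coefficient matrices and intersect them generically, which genuinely requires the residue field to be infinite. That restriction would be harmless if your fix for the general case were sound, but the ``preliminary gonflement'' suggestion does not obviously work: a gonflement $(S,\n)\to(S',\n')$ lets you factor $R\to S'$, not the original map $R\to S$, and there is no straightforward way to transfer a factorization of $R\to S'$ back to one of $R\to S$, nor is it clear how to gonfle $R$ and $S$ compatibly in general (the residue field of $R$ need not embed in that of $S$ in characteristic $p$). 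So as written your proof establishes the lemma only when the residue field is infinite. This happens to be enough for the applications in Section~4, where $S/\n$ is assumed algebraically closed, but the lemma as stated carries no such hypothesis, and the paper's prime-avoidance argument is what closes this gap.
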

\begin{proof}
We first note that $c\geq 0$ by Theorem \ref{theorem--regularity defect of flat local extension}. Now we let: $$(R,\m)\to (T',\n_{T'})\to (S,\n)$$ be a Cohen-factorization as in Theorem \ref{theorem--Cohen factorization}, where $R\to T'$ is flat local with $T'/\m T'$ regular, and $S=T'/J'$ with $\pd_{T'}S<\infty$. Suppose $\dim T'/\m T'=\depth T'/\m T'=b$. Since $S$ is a quotient of $T'$, $\edim S \leq \edim T'$, and we have $$b=\edim T'-\edim R\geq\edim S-\edim R=c \geq 0.$$

If $b=c$ then $\edim S=\edim T'$ and hence $J\subseteq \n_{T'}^2$. Thus we simply set $T=T'$ and $J=J'$ and one can check that (1)--(3) are all satisfied by Theorem \ref{theorem--Cohen factorization} and Lemma \ref{lemma--perfect ideal}.

Now suppose $b>c$, we claim that there exists $y_1,\dots,y_{b-c}\in J'$ satisfying the following conditions:
\begin{enumerate}[ (a)\,\,]
\item The image of $y_1,\dots,y_{b-c}$ in $T'/\m T'$ form a regular sequence on $T'/\m T'$;
\item The image of $y_1,\dots,y_{b-c}$ in $\n_{T'}/\n_{T'}^2$ form part of a basis for $\n_{T'}/\n_{T'}^2$.
\end{enumerate}

We construct these elements inductively: 
suppose we already find $y_1,\dots,y_j$, $0\leq j<b-c$ ($j=0$ is the initial case). Let $\overline{T}=T'/(y_1,\dots,y_j)T'$ and $\overline{J}=J'\overline{T}$.  Since $y_1,\dots,y_j\in J'$, we still have $S=\overline{T}/\overline{J}$. Because $\edim \overline{T}=\edim T'-j>\edim T'-(b-c)=\edim S$,  $\overline{J}\nsubseteq\n_{\overline{T}}^2$. Because $(R,\m)\to (S,\n)$ is flat local with $\dim R=\dim S$, $\m S$ is $\n$-primary, which implies $\m \overline{T}+\overline{J}$ is $\n_{\overline{T}}$-primary since $S=\overline{T}/\overline{J}$. But by the inductive hypothesis $\overline{T}/\m\overline{T}$ is a complete intersection of dimension $b-j>0$. In particular, every associated prime $P$ of $\overline{T}/\m \overline{T}$ has $\dim \overline{T}/P=b-j>0$. This implies $\m\overline{T}+\overline{J}\nsubseteq P$ and hence $\overline{J}\nsubseteq P$ for every associated prime $P$ of $\overline{T}/\m\overline{T}$. Now by prime avoidance, we have: $$\overline{J}\nsubseteq \n_{\overline{T}}^2\cup (\cup_{P\in\Ass\overline{T}/\m \overline{T}}P).$$ Therefore we can pick $y_{j+1}\in \overline{J}$ such that $y_{j+1}\notin \cup_{P\in\Ass\overline{T}/\m \overline{T}}P$ and $y_{j+1}\notin \n_{\overline{T}}^2$. But this precisely means $y_1,\dots,y_{j+1}$ satisfies (a) and (b). This finishes the proof of our claim.

We set $T=T'/(y_1,\dots,y_{b-c})$ and we claim that $T$ satisfies (1)--(3). It follows directly from condition (a) and Theorem \ref{theorem--flat map with regular fibre} that $R\to T$ is flat local with $T$ complete and $T/\m T$ a complete intersection, and thus $T$ satisfies (1). Clearly we have $S=T/J$ where $J=J'T$. Since $y_1,\dots,y_{b-c}$ is a regular sequence on $T'/\m T'$ by (a), Theorem \ref{theorem--flat map with regular fibre} tells us that $y_1,\dots,y_{b-c}$ is a regular sequence in $T'$. This together with (b) implies $\pd_{T}S<\infty$ by Lemma \ref{lemma--killing linear term preserves finite projective dimension}. Hence by Lemma \ref{lemma--perfect ideal}, $J$ is a perfect ideal of $T$. By the Auslander-Buchsbaum formula,
$$\pd_{T}S=\depth T-\depth S=\depth T-\depth R.$$  But since $T$ is obtained from $T'$ by killing a regular sequence of length $b-c$, we have  $$\pd_{T}S=\depth T'-\depth R-(b-c)=\depth T'/\m T'-(b-c)=c,$$ which verified (2). Finally, because $y_1,\dots,y_{b-c}$ is part of a basis for $\n_{T'}/\n_{T'}^2$ by (b), $$\edim T=\edim T'-(b-c)=\edim R+c=\edim S,$$ which implies $J\subseteq\n_{T}^2$. Therefore we have verified (3) and hence finished the proof of the lemma.
\end{proof}

Lemma \ref{lemma--factoring maps with c.i. closed fibres} immediately implies the following corollary, which was originally proved in \cite{Lechinequalitiesofflatcouples} using different methods.

\begin{corollary}
\label{corollary--difference of embedding dimension <=1}
Let $(R,\m)\to (S,\n)$ be a flat local map between complete local rings of the same dimension. If $\edim S-\edim R\leq 1$, then $S/\m S$ is a complete intersection.
\end{corollary}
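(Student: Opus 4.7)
My plan is to apply Lemma \ref{lemma--factoring maps with c.i. closed fibres} with $c := \edim S - \edim R \in \{0,1\}$, obtaining a factorization $(R,\m) \to (T, \n_T) \to (S,\n) = T/J$ in which $T/\m T$ is a complete intersection, $J$ is a perfect ideal with $\pd_T S = c$, and $J \subseteq \n_T^2$. Since the closed fibre $S/\m S$ is the quotient of the complete intersection $T/\m T$ by the image of $J$, it suffices to show this quotient is itself a complete intersection.

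The case $c = 0$ will be immediate, since $\pd_T S = 0$ together with the surjection $T \twoheadrightarrow S$ forces $J = 0$, hence $S/\m S = T/\m T$. The substantive case is $c = 1$, where I will first establish that $J$ is principal. Indeed, from the short exact sequence $0 \to J \to T \to T/J \to 0$ and $\pd_T(T/J) = 1$, the ideal $J$ is free as a $T$-module; since a free submodule of $T$ of rank $\geq 2$ is ruled out by the commutativity relation $f\cdot g = g\cdot f$, and since $J$ contains a nonzerodivisor, one concludes $J = (f)$ for some nonzerodivisor $f \in \n_T^2$.

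The remaining task is to verify that the image $\bar f$ of $f$ in $T/\m T$ is a nonzerodivisor. Via the Auslander-Buchsbaum formula together with flatness of $R \to T$ and the equality $\depth S = \depth R$ (the fibre $S/\m S$ has depth zero as it is Artinian), one gets $\depth T/\m T = \pd_T S = 1$; since $T/\m T$ is Cohen-Macaulay, $\dim T/\m T = 1$ as well. But $(T/\m T)/(\bar f) = S/\m S$ is Artinian, so $\bar f$ is a system of parameters of the one-dimensional Cohen-Macaulay ring $T/\m T$, hence a nonzerodivisor. Lifting to a regular presentation of $T/\m T$ and appending $\bar f$ to the regular sequence then exhibits $S/\m S$ as a complete intersection.

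The main obstacle is the identification of $J$ as principal in the case $c = 1$; this step rests crucially on the perfect-ideal structure of $J$ supplied by Lemma \ref{lemma--factoring maps with c.i. closed fibres}. Once principality is in hand, the hypothesis $J \subseteq \n_T^2$ combined with Cohen-Macaulayness of $T/\m T$ provides precisely the input needed to conclude that the closed fibre $S/\m S$ is a complete intersection.
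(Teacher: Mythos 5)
Your proof is correct and follows essentially the same route as the paper: apply Lemma \ref{lemma--factoring maps with c.i. closed fibres} to reduce to $\pd_T(T/J)\leq 1$, conclude $J$ is zero or principal on a nonzerodivisor, and then use the fact that $T/\m T$ is a one-dimensional Cohen--Macaulay ring to see the generator remains a nonzerodivisor modulo $\m T$. The only cosmetic difference is that you reach $\dim T/\m T=1$ via the Auslander--Buchsbaum computation $\depth T/\m T=\pd_T S=1$, whereas the paper gets it directly from $\dim T/\m T=\dim T-\dim R=\dim T-\dim S=1$; both land in the same place.
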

\begin{proof}
We factor this map as in Lemma \ref{lemma--factoring maps with c.i. closed fibres}. By Lemma \ref{lemma--factoring maps with c.i. closed fibres} (2), $\pd_TS=\pd_T(T/J)\leq 1$. So $J$ is either $0$ or a principal ideal generated by a nonzerodivisor $y$ in $T$. In the latter case we claim that $y$ must be a nonzerodivisor on $T/\m T$ also. This is because $$\dim (T/\m T)/y(T/\m T)=\dim S/\m S=0,$$ while $T/\m T$ is a complete intersection with $$\dim T/\m T=\dim T-\dim R=\dim T-\dim S=1.$$ Therefore in either case, $S/\m S$ is a ($0$-dimensional) complete intersection.
\end{proof}

We end this section by proving a lemma on the behavior of Hilbert-Kunz multiplicities under flat local map with regular closed fiber. We believe this result (and perhaps more general results) are well known to experts. But we could not find a reference that can cover the generality we need, so we give the proof.

\begin{lemma}
\label{lemma--Appendix}
Let $(R,\m)\to (T,\n_T)$ be a flat local extension between complete local rings of characteristic $p>0$ such that $T/\m T$ is regular. Then
\begin{enumerate}
\item $e_{HK}(T)=e_{HK}(R);$
\item $e_{HK}(\n_T^2, T)=e_{HK}(\m^2, R)+ (\edim T-\edim R)\cdot e_{HK}(R).$
\end{enumerate}
\end{lemma}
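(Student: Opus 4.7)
The plan is to exploit the regularity of $T/\m T$ by lifting a regular system of parameters $y_1,\ldots,y_s$ of $T/\m T$ (where $s=\dim T/\m T$) to elements of $\n_T$. Theorem~\ref{theorem--flat map with regular fibre} then furnishes two crucial facts: $y_1,\ldots,y_s$ is a regular sequence on $T/IT$ for every $I\subseteq R$, and $T':=T/(y_1,\ldots,y_s)T$ is faithfully flat over $R$ with $T'/\m T'$ equal to the residue field of $T$. A filtration-by-composition-factors argument over $R/\m$ then shows $l_{T'}(T'/IT')=l_R(R/I)$ for every $\m$-primary $I$, a formula I will use repeatedly.

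For (1), I plan to use the regular sequence property to compute
\[
l_T(T/\n_T^{[p^e]})=l_T\bigl(T/(\m^{[p^e]}T+(y_1^{p^e},\ldots,y_s^{p^e}))\bigr)=p^{es}\cdot l_{T'}(T'/\m^{[p^e]}T')=p^{es}\cdot l_R(R/\m^{[p^e]}),
\]
the middle equality holding because $y_1^{p^e},\ldots,y_s^{p^e}$ is regular on $T/\m^{[p^e]}T$, so killing it multiplies the length by $p^{es}$. Dividing by $p^{e(\dim R+s)}$ and letting $e\to\infty$ gives (1). Before turning to (2), I also need the identity $\edim T-\edim R=s$, which I would derive from the short exact sequence $0\to \m T/(\m T\cap \n_T^2)\to \n_T/\n_T^2\to (y)/(y)\n_T\to 0$: the right term is $(T/\n_T)^s$ by regularity of the $y$'s on $T$, while the left term has $T/\n_T$-dimension $\edim R$ once I prove $\m T\cap \n_T^2=\m\n_T$. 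The latter reduces to $IT\cap (y)^kT=I\cdot (y)^k$, which follows inductively from the flatness of $T/(y)^kT$ over $R$.

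For the multiplicity in (2), my approach is to work modulo $(\m^{[p^e]})^2$: set $T_2:=T/(\m^{[p^e]})^2 T$. Since $(\n_T^{[p^e]})^2=(\m^{[p^e]})^2 T+\m^{[p^e]}(y^{p^e})T+(y^{p^e})^2 T$ collapses modulo $(\m^{[p^e]})^2$ to the ideal $(y^{p^e})\n_T^{[p^e]}T_2$, I obtain $T/(\n_T^{[p^e]})^2\cong T_2/(y^{p^e})\n_T^{[p^e]}T_2$. The short exact sequence
\[
0\to (y^{p^e})T_2/(y^{p^e})\n_T^{[p^e]}T_2\to T_2/(y^{p^e})\n_T^{[p^e]}T_2\to T_2/(y^{p^e})T_2\to 0
\]
then splits the length into two computable pieces. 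The right term contributes $p^{es}\cdot l_R(R/(\m^{[p^e]})^2)$ by the argument of (1) applied to the ideal $(\m^{[p^e]})^2$. For the left term, the regularity of $y_1^{p^e},\ldots,y_s^{p^e}$ on $T_2$ (which follows from flatness of $T_2$ over $R/(\m^{[p^e]})^2$ and Theorem~\ref{theorem--flat map with regular fibre}) gives $(y^{p^e})T_2/(y^{p^e})^2 T_2\cong (T_2/(y^{p^e})T_2)^s$ freely, and identifying the further quotient by $(y^{p^e})\m^{[p^e]}T_2$ as $(T/\n_T^{[p^e]})^s$ yields length $s\cdot p^{es}\cdot l_R(R/\m^{[p^e]})$. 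Combining and dividing by $p^{e\dim T}$ produces $e_{HK}(\n_T^2,T)=e_{HK}(\m^2,R)+s\cdot e_{HK}(R)$, which is (2). I expect the main delicate step to be the identification of $(y^{p^e})\n_T^{[p^e]}T_2/(y^{p^e})^2 T_2$ as the submodule $(\n_T^{[p^e]}T_2/(y^{p^e})T_2)^s$ of $(T_2/(y^{p^e}))^s$, requiring careful bookkeeping with the regular-sequence isomorphism; everything else is then a routine length computation.
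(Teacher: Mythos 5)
Your proof is correct but takes a genuinely different route from the paper. The paper interpolates the map $R \to T$ by a power series ring $T_0 = R[[x_1,\ldots,x_n]]$: since $T_0 \to T$ has residual closed fiber, $e_{HK}(T) = e_{HK}(T_0)$ and $e_{HK}(\n_T^2, T) = e_{HK}(\n_0^2, T_0)$, and then an easy induction reduces to a single power series variable $R[[x]]$, where the length $l(R[[x]]/((\m+x)^2)^{[p^e]})$ admits a transparent direct computation. You instead work entirely inside $T$: you lift a regular system of parameters $y_1,\ldots,y_s$ of $T/\m T$, observe that $\n_T^{[p^e]} = \m^{[p^e]}T + (y^{p^e})$ and $(\n_T^{[p^e]})^2 = (\m^{[p^e]})^2 T + (y^{p^e})\n_T^{[p^e]}T$, and then use the regular-sequence property of $y_1^{p^e},\ldots,y_s^{p^e}$ on $T/IT$ (Theorem~\ref{theorem--flat map with regular fibre}) to split off lengths over $T' = T/(y)T$, reducing to $l_{T'}(T'/IT') = l_R(R/I)$ since $T'/\m T'$ is a field. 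The short exact sequence splitting $T_2/(y^{p^e})\n_T^{[p^e]}T_2$ and the identification of its left term with $(T/\n_T^{[p^e]})^s$ via the freeness of $(y^{p^e})/(y^{p^e})^2$ over $T_2/(y^{p^e})T_2$ are correct, if somewhat more intricate than the paper's explicit one-variable count. Your approach avoids constructing the auxiliary ring $T_0$ and the induction, at the cost of some Artinian bookkeeping; the paper's approach is shorter once one accepts passage to $T_0$. One small remark: the short exact sequence you propose for establishing $\edim T - \edim R = s$ has the wrong cokernel as written (it should be $(y)/((y)\cap(\m T + \n_T^2))$), but the identity itself is standard and can be read off directly from $\n_{T'}/\n_{T'}^2 \cong (\m/\m^2)\otimes_{R/\m}(T'/\n_{T'})$ together with $y_1,\ldots,y_s$ being part of a minimal generating set of $\n_T$.
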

\begin{proof}
Let $\dim T/\m T=\edim T-\edim R=n$ and let $x_1,\dots,x_n$ be a regular system of parameters in $T/\m T$. We set $(T_0,\n_0)=R[[x_1,\dots,x_n]]$. Note that $(T_0,\n_0)\to (T, \n_T)$ is a flat local map such that the closed fiber $T/\n_0T$ is a field, i.e., $\n_0T=\n_T$. Hence it is clear that $$e_{HK}(T)=e_{HK}(T_0)=e_{HK}(R) \text{ and } e_{HK}(\n_T^2, T)=e_{HK}(\n_0^2, T_0).$$ It thus remains to show that $$e_{HK}(\n_0^2, T_0)=e_{HK}(\m^2, R)+ (\edim T_0-\edim R)\cdot e_{HK}(R).$$ Since $T_0$ is a power series over $R$ with $n=\edim T_0-\edim R$ variables, by an easy induction it suffices to prove that $$e_{HK}((\m+x)^2, R[[x]])=e_{HK}(\m^2, R)+e_{HK}(R).$$ Now we observe that
$$l\left(\frac{R[[x]]}{((\m+x)^2)^{[p^e]}}\right)=l\left(\frac{R[[x]]}{(\m^2)^{[p^e]}+\m^{[p^e]}x^{p^e}+(x^2)^{[p^e]}}\right)=p^e\cdot l(R/(\m^2)^{[p^e]})+p^e\cdot l(R/\m^{[p^e]}).$$
Dividing both sides by $p^{e(\dim R+1)}$ and taking limit we find that $e_{HK}((\m+x)^2, R[[x]])=e_{HK}(\m^2, R)+e_{HK}(R)$, as desired.
\end{proof}

\section{The main result in characteristic $p>0$}

In this section we will prove Theorem \ref{theorem--main technical theorem} in equal characteristic $p>0$. Our proof heavily uses Hilbert-Kunz theory. We will give two inequalities on the multiplicities from which Theorem \ref{theorem--main technical theorem} follows immediately. Throughout this section, we assume all rings have equal characteristic $p>0$.

\subsection{An inequality on multiplicities in terms of $\edim S-\edim R$} We begin by proving the following lemma that is a consequence of Theorem \ref{theorem--vanishing of higher Koszul}.
\begin{lemma}
\label{lemma--vanishing of higher Koszul}
Let $(T,\n_T)$ be a complete local ring of characteristic $p>0$ with $T/\n_T$ a perfect field. Let $J$ be a perfect ideal of $T$ and $(S,\n)=T/J$. Then for every system of parameters $\underline{x}=x_1,\dots,x_d$ of $S$, we have $$\lim_{e\to\infty}\frac{l_S(H_i(\underline{x}, T^{(e)}\otimes S))}{p^{e\cdot\dim T}}=0$$ for every $i\geq 1$.
\end{lemma}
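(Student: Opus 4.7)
The plan is to apply Theorem \ref{theorem--vanishing of higher Koszul} to a carefully constructed bounded complex of free $T$-modules and then identify its Frobenius iterates with the Koszul homology in the statement.

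Since $J$ is perfect, $\pd_T S = \depth_J T =: c$, and combining with the inequalities in (\ref{equation--inequality on depth, height and projdim}) gives $c + d \leq \dim T$. Let $F_\bullet \to S$ be a finite free resolution of length $c$ over $T$. Lift $\underline{x}$ to a sequence in $T$ and form the total complex
\[
G_\bullet := K_\bullet(\underline{x}, T) \otimes_T F_\bullet,
\]
a bounded complex of finite free $T$-modules of length $c + d \leq \dim T$. Since $K_\bullet(\underline{x}, T)$ is a complex of flat $T$-modules, the quasi-isomorphism $F_\bullet \qis S$ gives $G_\bullet \qis K_\bullet(\underline{x}, S)$, whose homology $H_i(\underline{x}, S)$ has finite length because $\underline{x}$ is a system of parameters of $S$. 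Thus $G_\bullet$ satisfies the hypotheses of Theorem \ref{theorem--vanishing of higher Koszul} (over $T$), so
\[
\lim_{e \to \infty} \frac{l_T(H_i(F^e(G_\bullet)))}{p^{e \cdot \dim T}} = 0 \qquad \text{for every } i \geq 1.
\]

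It remains to identify $H_i(F^e(G_\bullet))$. Because the Frobenius functor commutes with tensor products of free modules and sends $K_\bullet(\underline{x}, T)$ to $K_\bullet(\underline{x}^{p^e}, T)$, we have $F^e(G_\bullet) = K_\bullet(\underline{x}^{p^e}, T) \otimes_T F^e(F_\bullet)$. Peskine--Szpiro's acyclicity theorem, which applies precisely because $\pd_T S < \infty$ (i.e.\ because $J$ is perfect), guarantees that $F^e(F_\bullet)$ remains a finite free resolution, now of $F^e(S) = T^{(e)} \otimes_T S$. Tensoring this quasi-isomorphism with the complex of free modules $K_\bullet(\underline{x}^{p^e}, T)$ gives $F^e(G_\bullet) \qis K_\bullet(\underline{x}^{p^e}, T^{(e)} \otimes_T S)$, so
\[
H_i(F^e(G_\bullet)) \cong H_i(\underline{x}^{p^e}, T^{(e)} \otimes_T S) = H_i(\underline{x}, T^{(e)} \otimes S),
\]
where the last equality records that $\underline{x} \in T$ acts on the $T^{(e)}$-module $T^{(e)} \otimes_T S$ through the Frobenius structure map $T \to T^{(e)}$, i.e.\ as $\underline{x}^{p^e}$. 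Combined with the previous display, this yields the desired vanishing.

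The proof is essentially bookkeeping once the right complex is chosen. The two substantive inputs are: (a) the dimension bound $c + d \leq \dim T$ (using perfectness of $J$) so that $G_\bullet$ has length at most $\dim T$ and Theorem \ref{theorem--vanishing of higher Koszul} applies; and (b) Peskine--Szpiro's acyclicity theorem, which identifies $F^e(G_\bullet)$ up to quasi-isomorphism with the Koszul complex on $T^{(e)} \otimes S$. The only real subtlety is in tracking correctly which structure map (identity versus Frobenius) underlies the Koszul complex in the final identification.
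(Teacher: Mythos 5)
Your approach matches the paper's exactly: form $G_\bullet := K_\bullet(\underline{x}, T) \otimes_T F_\bullet$ for $F_\bullet$ a finite free resolution of $S$, check it has finite-length homology, apply Theorem~\ref{theorem--vanishing of higher Koszul}, and identify $H_i(F^e(G_\bullet))$ with $H_i(\underline{x}, T^{(e)}\otimes S)$ via Peskine--Szpiro. This is precisely the argument the paper gives, with your version spelling out the Peskine--Szpiro step more explicitly.

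Two points to tighten, the second of which is a genuine (if small) gap. First, since $J$ is perfect, \emph{all} the inequalities in~(\ref{equation--inequality on depth, height and projdim}) are equalities, so $c = \pd_T S = \dim T - d$ and $G_\bullet$ has length exactly $\dim T$; Theorem~\ref{theorem--vanishing of higher Koszul} as stated requires length equal to $\dim T$, so you should either record this equality (the paper does) or pad $G_\bullet$ by zero modules. Second, and more substantively, what Theorem~\ref{theorem--vanishing of higher Koszul} produces is the vanishing of $l_T\bigl(H_i(F^e(G_\bullet))\bigr)/p^{e\dim T}$, while the lemma asserts vanishing for $l_S$. Under your identification, the module $T^{(e)}\otimes_T S$ carries an $S$-action through the Frobenius of $S$, so $l_S$ of a finite-length module is $[S/\n : (S/\n)^{p^e}]$ times the length taken over $S^{(e)} = T^{(e)}/JT^{(e)}$ (equivalently over $T^{(e)}$). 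These coincide precisely because $T/\n_T = S/\n$ is a perfect field, i.e.\ $\alpha(S)=\alpha(T)=0$. You flag ``identity versus Frobenius'' as the subtlety but never resolve it; the paper invokes perfectness of $T/\n_T$ explicitly at this step, and without it the argument would only give vanishing up to the extra factor $p^{e\alpha(T)}$, which is not enough.
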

\begin{proof}
Since $J$ is a perfect ideal, we have $\pd_TS=\dim T-\dim S$. Let $G_\bullet$ be a minimal free resolution of $S$ over $T$ and let $K_\bullet(\underline{x}, T)$ be the Koszul complex on $\underline{x}$. We have $$H_i(\underline{x}, T^{(e)}\otimes S)=H_i(K_\bullet(\underline{x}, T)\otimes T^{(e)}\otimes S)=H_i(T^{(e)}\otimes K_\bullet(\underline{x}, T)\otimes G_\bullet).$$
Next we note that $K_\bullet(\underline{x}, T)\otimes G_\bullet$ is a complex of free $T$-modules of length $\dim S+\pd_TS=\dim T$, with homology of finite length. Hence by Theorem \ref{theorem--vanishing of higher Koszul}, $$\lim_{e\to\infty}\frac{l_T\left(H_i(F_T^e(K_\bullet(\underline{x}, T)\otimes G_\bullet))\right)}{p^{e\cdot\dim T}}=0$$ for every $i\geq 1$.
But since $T/\n_T$ is a perfect field, we have $$l_S(H_i(\underline{x}, T^{(e)}\otimes S))=l_T(H_i(T^{(e)}\otimes K_\bullet(\underline{x}, T)\otimes G_\bullet))=l_T\left(H_i(F_T^e(K_\bullet(\underline{x}, T)\otimes G_\bullet)\right). \qedhere$$
\end{proof}

Before we proceed, we emphasize that if $(T,\n_T)$ is a complete local ring of characteristic $p>0$ such that $T/\n_T$ is a perfect field, then $T^{(e)}$ is a finitely generated $T$-module (i.e., $T$ and hence all localizations of $T$ are $F$-finite). Because a complete local ring is $F$-finite if and only if its residue field is $F$-finite. Therefore $T^{(e)}\otimes_TS$ is a finitely generated $S$-module for every map $T\to S$, and in particular, we can talk about the multiplicities of $T^{(e)}\otimes S$.

\begin{lemma}
\label{lemma--multiplicity for perfect ideal generically numerically Roberts}
Let $(T,\n_T)$ be a complete local ring of characteristic $p>0$ with $T/\n_T$ a perfect field. Let $J$ be a perfect ideal of $T$ and $S=T/J$. Then for every system of parameters $\underline{x}=x_1,\dots,x_d$ of $S$, we have
\begin{equation}
\label{equation--equality on multiplicities and the crucial limit}
e_{HK}(J+\underline{x}, T)=\lim_{e\to\infty}\frac{1}{p^{e\cdot \dim T}}\cdot e(\underline{x}, T^{(e)}\otimes_TS)=\sum_Pe(\underline{x}, T/P)e_{HK}(J, T_P)
\end{equation}
where the sum is taken over all minimal prime $P$ of $J$ such that $\dim T/P=\dim S$.

Moreover, if $T_P$ is a numerically Roberts ring (e.g., $T_P$ is a complete intersection) for every such $P$, then the above is also equal to $e(\underline{x}, S)$.
\end{lemma}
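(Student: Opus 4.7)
The plan is to establish the three asserted equalities in turn, leveraging Lemma~\ref{lemma--vanishing of higher Koszul} for the first and careful bookkeeping of the Frobenius twist for the rest. Throughout, I identify $T^{(e)}\otimes_T S \cong T/J^{[p^e]}$ as $T$-modules; under this identification, a lift $\tilde{x}_i\in T$ of $x_i\in S$ acts on $T/J^{[p^e]}$ by multiplication by $\tilde{x}_i^{p^e}$.

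For the first equality I would expand $e(\underline{x}, T^{(e)}\otimes_T S)$ as the Koszul Euler characteristic $\sum_i (-1)^i l_S(H_i(\underline{x}, T^{(e)}\otimes_T S))$. Lemma~\ref{lemma--vanishing of higher Koszul} forces every $i\geq 1$ term to contribute $o(p^{e\cdot\dim T})$, so the limit reduces to $\lim_{e\to\infty} l_S(H_0)/p^{e\cdot\dim T}$. From the Frobenius-twist identification, $H_0 \cong T/(J + \tilde{\underline{x}})^{[p^e]}$; since $T$- and $S$-lengths agree on finite-length modules (common residue field $T/\n_T$), the limit is exactly $e_{HK}(J+\underline{x}, T)$ by definition.

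For the second equality, the same twist shows that the $S$-Koszul complex $K_\bullet(\underline{x}, T^{(e)}\otimes_T S)$ coincides, as a complex of abelian groups, with the $T$-Koszul complex $K_\bullet(\tilde{\underline{x}}^{[p^e]}, T/J^{[p^e]})$. Hence $e(\underline{x}, T^{(e)}\otimes_T S) = e(\tilde{\underline{x}}^{[p^e]}, T/J^{[p^e]}) = p^{ed}\cdot e(\tilde{\underline{x}}, T/J^{[p^e]})$, the final identity being the standard scaling $e(\underline{y}^{[n]}, M) = n^{\dim M} e(\underline{y}, M)$. Applying additivity of multiplicity to the $T$-module $T/J^{[p^e]}$, only the minimal primes $P$ of $J$ with $\dim T/P = d$ contribute (lower-dimensional ones yield zero multiplicity), giving $\sum_P l_{T_P}(T_P/(JT_P)^{[p^e]})\cdot e(\underline{x}, T/P)$. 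Since $\dim T_P = \dim T - d$ by catenary of complete local rings, dividing by $p^{e\cdot\dim T} = p^{ed}\cdot p^{e\cdot\dim T_P}$ and letting $e\to\infty$ converts each length ratio into $e_{HK}(JT_P, T_P)$, as claimed.

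For the \emph{moreover} clause, $S$-additivity yields $e(\underline{x}, S) = \sum_P l_{T_P}(T_P/JT_P)\cdot e(\underline{x}, T/P)$ over the same set of $P$'s (the minimal primes of $S = T/J$ of maximal dimension are precisely the $P/J$, and $S_{P/J} = T_P/JT_P$). It therefore suffices to prove $l_{T_P}(T_P/JT_P) = e_{HK}(JT_P, T_P)$ whenever $T_P$ is numerically Roberts. Perfectness of $J$ is inherited by localization, so $JT_P$ has finite projective dimension over $T_P$, and being $PT_P$-primary it is also $\m_{T_P}$-primary. Applying the local Riemann-Roch formula to a length-$\dim T_P$ minimal free resolution $G_\bullet$ of $T_P/JT_P$ yields $\chi(G_\bullet) = l(T_P/JT_P)$; Theorem~\ref{theorem--vanishing of higher Koszul} identifies $\chi_\infty(G_\bullet) = e_{HK}(JT_P, T_P)$; and the numerically Roberts property $\chi = \chi_\infty$ equates the two, exactly as spelled out at the end of Section 2. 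The main subtlety throughout is keeping track of the $p^{ed}$ factor arising from the Frobenius twist: without it, the normalization $1/p^{e\cdot\dim T}$ would crush each individual summand to zero.
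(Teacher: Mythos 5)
Your argument is correct. The first equality and the \emph{moreover} clause follow the same route as the paper: kill the higher Koszul homology via Lemma~\ref{lemma--vanishing of higher Koszul}, identify $H_0$ as a bracket-power quotient, and use $\chi=\chi_\infty$ together with $\chi_\infty=e_{HK}$ for the numerically Roberts case.

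For the middle equality you take a genuinely different route. The paper applies the associativity formula directly to the twisted module $T^{(e)}\otimes_T S$ and then, for each localization, converts $l_{T_P}\bigl((T_P)^{(e)}/J(T_P)^{(e)}\bigr)$ into $p^{e\alpha(T_P)}\,l_{T_P}(T_P/J^{[p^e]}T_P)$ using Kunz's formula $\alpha(T_P)=\alpha(T)+\dim T/P$ and the identity $\alpha(T_P)+\dim T_P=\dim T$. You instead untwist globally first, identifying $T^{(e)}\otimes_T S\cong T/J^{[p^e]}$ and observing that the $S$-Koszul complex on $\underline{x}$ is the $T$-Koszul complex on $\tilde{\underline{x}}^{[p^e]}$, then peel off the $p^{ed}$ factor via the scaling $e(\underline{y}^{[n]},M)=n^{\dim M}e(\underline{y},M)$, and only then apply associativity to the ordinary $T$-module $T/J^{[p^e]}$. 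The two computations are secretly the same arithmetic (both produce the factorization $p^{e\dim T}=p^{ed}\cdot p^{e\dim T_P}$), but your version avoids invoking $F$-finiteness of the localizations $T_P$ and Kunz's $\alpha$-formula, replacing them by the elementary multiplicity-scaling identity; this is a small but real simplification. The one place where perfectness of $T/\n_T$ still enters your argument is in comparing $S$-lengths and $T$-lengths of the Koszul homology (the residue-field map is Frobenius, which needs to be an isomorphism), and you correctly flag this. One minor remark: your appeal to local Riemann-Roch to get $\chi(G_\bullet)=l(T_P/JT_P)$ is unnecessary --- since $G_\bullet$ is a resolution this is trivial; Riemann-Roch is only needed in the background to make sense of $\chi_\infty$ and of the numerically Roberts condition.
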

\begin{proof}
By the Koszul characterization of multiplicity, we know that
$$\lim_{e\to\infty}\frac{1}{p^{e\cdot \dim T}}\cdot e(\underline{x}, T^{(e)}\otimes_TS)=\lim_{e\to\infty}\sum_i(-1)^i\cdot \frac{l_S\left(H_i(\underline{x}, T^{(e)}\otimes S)\right)}{p^{e\cdot\dim T}}.$$
Since $J$ is a perfect ideal by Lemma \ref{lemma--perfect ideal}, Lemma \ref{lemma--vanishing of higher Koszul} tells us all the higher terms on the right hand side vanish, hence the above is equal to
$$\lim_{e\to\infty}\frac{l_S\left(H_0(\underline{x}, T^{(e)}\otimes S)\right)}{p^{e\cdot\dim T}}=\lim_{e\to\infty}\frac{l_T\left(F^e_T(T/((\underline{x})+J))\right)}{p^{e\cdot\dim T}}=e_{HK}(J+(\underline{x}), T).$$
This proves the first equality in (\ref{equation--equality on multiplicities and the crucial limit}).

Next, suppose $P$ is a minimal prime of $J$ such that $\dim T/P=\dim S$. Because $T/\n_T$ is perfect, we have $$\alpha(T_{P})=\alpha(T)+\dim T/P=\dim T/P.$$ Since $J$ is a perfect ideal, we have $\pd_TS=\height J=\dim T-\dim S$, thus by our choice of $P$, $$\dim T\geq \dim T/P+\dim T_{P}\geq \dim S+\height J=\dim T.$$ Hence we have $\dim T_{P}+\alpha(P)=\dim T$ for every minimal prime $P$ of $J$ such that $\dim T/P=\dim S$. Now by the associativity formula for multiplicities \cite[Theorem 11.2.4]{HunekeSwansonIntegralClosure}, we have
\begin{eqnarray*}
\frac{1}{p^{e\cdot\dim T}}\cdot e(\underline{x}, T^{(e)}\otimes_TS)&=& \frac{1}{p^{e\cdot\dim T}}\cdot\sum_Pl_{T_{P}}\left((T^{(e)}\otimes_T S)_{P}\right)\cdot e(\underline{x}, T/P)\\
&=&\frac{1}{p^{e\cdot\dim T}}\cdot\sum_Pl_{T_{P}}\left((T_{P})^{(e)}/J(T_{P})^{(e)}\right)\cdot e(\underline{x}, T/P)\\
&=&\frac{1}{p^{e\cdot\dim T}}\cdot p^{e\cdot\alpha(T_{P})}\sum_Pl_{T_{P}}(T_{P}/J^{[p^e]}T_{P})\cdot e(\underline{x}, T/P)\\
&=&\sum_P\frac{1}{p^{e\cdot\dim T_{P}}}l_{T_{P}}(T_{P}/J^{[p^e]}T_{P})\cdot e(\underline{x}, T/P).
\end{eqnarray*}
Now we take the limit as $e\to\infty$ and by the definition of Hilbert-Kunz multiplicity, we have
$$\lim_{e\to\infty}\frac{1}{p^{e\cdot \dim T}}\cdot e(\underline{x}, T^{(e)}\otimes_TS)=\sum_Pe(\underline{x}, T/P)e_{HK}(J, T_P).$$ This proves the second equality in (\ref{equation--equality on multiplicities and the crucial limit}).

Finally, if $T_{P}$ is a numerically Roberts ring, then we know that $e_{HK}(J, T_P)=l_{T_{P}}(T_{P}/JT_{P})$ because $JT_P$ is a $PT_P$-primary ideal of finite projective dimension. So we have:
$$\sum_Pe(\underline{x}, T/P)e_{HK}(J, T_P)=\sum_Pe(\underline{x}, T/P)l_{T_{P}}(T_{P}/JT_{P})=e(\underline{x}, T/J)=e(\underline{x}, S). \qedhere$$
\end{proof}

\begin{remark}
It is worth to mention that the last assertion of Lemma \ref{lemma--multiplicity for perfect ideal generically numerically Roberts} is {\it false} in general if we do not assume $T_P$ is numerically Roberts. In \cite{RobertsIntersectionTheoremsMSRI}, based on earlier work of \cite{DuttaHochsterMcLaughlinModulesoffiniteprojectivedimension}, Roberts constructed an example of a three-dimensional Cohen-Macaulay ring $(T,\n_T)$ and an $\n_T$-primary ideal $J$ of finite projective dimension such that $$e_{HK}(J, T)=\lim_{e\to\infty}\frac{l_T(T/J^{[p^e]})}{p^{e\cdot\dim T}}\neq l_T(T/J).$$
Therefore we can set $S=T/J$ (and $\underline{x}$ to be the empty system of parameters, i.e., the zero ideal) in Lemma \ref{lemma--multiplicity for perfect ideal generically numerically Roberts} to see that the desired equality fails: for an Artinian local ring, the multiplicity of any ideal is equal to the length of the ring. Of course, the problem is that our $T$ is not numerically Roberts.
\end{remark}

In order to apply Lemma \ref{lemma--multiplicity for perfect ideal generically numerically Roberts} in our setting, we need the following celebrated result on the localization problem of Grothendieck, see \cite{TabaaCompleteintersectionhomomorphism} or \cite[Theorem 4.1]{AvramovFoxbyGrothendiecklocalizationproblem}:

\begin{theorem}
\label{theorem--flat map with complete intersection closed fibers}
Let $\varphi$: $(R,\m)\to (T,\n_T)$ be a flat local map with $R$ complete. If the closed fiber $T/\m T$ is a complete intersection, then all fibers of $\varphi$ are complete intersections.
\end{theorem}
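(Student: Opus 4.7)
The plan is to apply Cohen factorization and then reduce to the Grothendieck localization problem for regularity. First I would factor $\varphi$ as $(R,\m)\to (T',\n_{T'})\to (T,\n_T)$ with $R\to T'$ flat local, $T'$ complete, the closed fiber $T'/\m T'$ regular, and $T'\twoheadrightarrow T$ surjective with kernel $J$ (Theorem~\ref{theorem--Cohen factorization}). Since $T/\m T=(T'/\m T')/\overline J$ is a complete intersection sitting inside the regular local ring $T'/\m T'$, the image $\overline J$ of $J$ is generated by a regular sequence $\overline f_1,\dots,\overline f_c$ of $T'/\m T'$.

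Next I would lift $\overline f_1,\dots,\overline f_c$ to elements $f_1,\dots,f_c$ in $J$. Because $R\to T'$ is flat and $\overline f_1,\dots,\overline f_c$ is $T'/\m T'$-regular, Theorem~\ref{theorem--flat map with regular fibre} gives that $f_1,\dots,f_c$ is a $T'$-regular sequence and that $T'/(f_1,\dots,f_c)$ is flat over $R$. The induced surjection $T'/(f_1,\dots,f_c)\twoheadrightarrow T$ is then a map between $R$-flat algebras inducing an isomorphism on closed fibers, so by the local criterion for flatness it is an isomorphism. Hence $T\cong T'/(f_1,\dots,f_c)$ with the $f_i$ forming a $T'$-regular sequence that, by flatness of $T$ over $R$, stays regular modulo every $P\in\Spec R$.

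To verify that every fiber of $\varphi$ is a complete intersection, fix $P\in\Spec R$ and a prime $Q'\in\Spec T'$ containing $J$ and lying over $P$, and let $Q=Q'/J\in\Spec T$. Then
\[
T_Q/PT_Q \;=\; T'_{Q'}\big/\bigl(PT'_{Q'}+(f_1,\dots,f_c)T'_{Q'}\bigr),
\]
and the images of $f_1,\dots,f_c$ form a regular sequence in the fiber $T'_{Q'}/PT'_{Q'}$. The main obstacle, and the crucial input, is to know that $T'_{Q'}/PT'_{Q'}$ is itself a regular local ring. This is precisely the content of the Grothendieck localization problem for regularity, whose affirmative answer \cite{AvramovFoxbyGrothendiecklocalizationproblem} applies here because $R$ is complete (so it has geometrically regular formal fibers) and $R\to T'$ is flat with geometrically regular closed fiber, a condition that can be arranged within the Cohen factorization. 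Granting this, $T_Q/PT_Q$ is a quotient of a regular local ring by a regular sequence, hence a complete intersection. Since $P$ and $Q$ were arbitrary, every fiber of $\varphi$ is locally a complete intersection, which is the statement of the theorem.
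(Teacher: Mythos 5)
The paper does not prove Theorem~\ref{theorem--flat map with complete intersection closed fibers}; it simply cites \cite{TabaaCompleteintersectionhomomorphism} and \cite[Theorem~4.1]{AvramovFoxbyGrothendiecklocalizationproblem}. So your proposal is being judged on its own, not against an argument in the paper.

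Your overall strategy is a natural one, and the intermediate steps are sound: the identification $T\cong T'/(f_1,\dots,f_c)$ via Nakayama is correct, and Theorem~\ref{theorem--flat map with regular fibre} does give that $f_1,\dots,f_c$ stays a regular sequence on $T'/PT'$, hence on $T'_{Q'}/PT'_{Q'}$. But there are two gaps. The smaller one: Theorem~\ref{theorem--Cohen factorization} requires the \emph{target} to be complete, while the hypothesis here only makes the \emph{source} $R$ complete. You need to first pass to $\widehat{T}$ and then descend — this works because a local ring is a complete intersection as soon as some flat local extension of it is — but the step is missing.

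The more serious gap is the final reduction. You need $T'_{Q'}/PT'_{Q'}$ to be regular, and you appeal to Grothendieck's localization theorem for formal smoothness; that theorem requires the closed fiber $T'/\m T'$ to be \emph{geometrically} regular over $R/\m$, not merely regular. A Cohen factorization produces $T'/\m T'$ isomorphic to a power series ring over the residue field $L=T/\n_T$, which is geometrically regular over $K=R/\m$ only when $L/K$ is separable. You assert that geometric regularity ``can be arranged within the Cohen factorization,'' but no such arrangement is available when $K\to L$ is inseparable, and the present paper invokes this theorem in characteristic $p>0$ precisely where that can fail. If one only concluded that $T'_{Q'}/PT'_{Q'}$ is a complete intersection rather than regular, the last step would still go through (a complete intersection modulo a regular sequence is a complete intersection) — but that conclusion is itself an instance of the theorem being proved, so the argument would be circular without a genuine proof of the regular case. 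This inseparability issue is exactly the delicate point that the cited sources must, and do, address.
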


We are ready to prove the following:

\begin{lemma}
\label{lemma--multiplicity in terms of the limit}
Let $(R,\m)\to (S,\n)$ be a flat local map between complete local rings of characteristic $p>0$ with $\dim R=\dim S$. Suppose we have a factorization $$(R,\m)\to (T,\n_T)\to (S,\n)=T/J$$ such that $R\to T$ is flat local with $T/\m T$ a complete intersection and $\pd_TS<\infty$ (for example, we can take a Cohen-factorization, or any factorization as in Lemma \ref{lemma--factoring maps with c.i. closed fibres}). If $R$ is a domain and $S$ has perfect residue field, then $$e_{HK}(J+\underline{x}, T)=\lim_{e\to\infty}\frac{1}{p^{e\cdot \dim T}}\cdot e(\underline{x}, T^{(e)}\otimes_TS)=e(\underline{x}, S)$$ for every system of parameters $\underline{x}$ of $S$.
\end{lemma}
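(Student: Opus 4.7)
My plan is to apply Lemma~\ref{lemma--multiplicity for perfect ideal generically numerically Roberts}. First I verify its hypotheses: $T$ is complete, the residue field $T/\n_T=S/\n$ is perfect (by the hypothesis on $S$), and $J$ is a perfect ideal of $T$ by Lemma~\ref{lemma--perfect ideal} (which applies since $\dim R=\dim S$ and $\pd_T S<\infty$). That lemma then yields both the first equality of the statement and the finer decomposition
\[
e_{HK}(J+\underline{x},T)=\sum_{P}e(\underline{x},T/P)\cdot e_{HK}(JT_P,T_P),
\]
the sum running over minimal primes $P$ of $J$ with $\dim T/P=\dim S$. In the present setting \emph{every} minimal prime of $J$ satisfies this dimension condition: the argument in the proof of Lemma~\ref{lemma--perfect ideal} together with the complete-intersection (hence Cohen--Macaulay) hypothesis on $T/\m T$ gives $\height J=\pd_T S=\depth T/\m T=\dim T/\m T=\dim T-\dim R$, so by catenarity of the complete local ring $T$, $\dim T/P=\dim T-\height P=\dim T-\height J=\dim R$ for every minimal $P$ over $J$.

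For the second equality I invoke the associativity formula for Hilbert--Samuel multiplicities, which gives $e(\underline{x},S)=\sum_P e(\underline{x},T/P)\cdot l_{T_P}(T_P/JT_P)$ with the same indexing set. Comparing with the decomposition above reduces the desired equality to the identity $e_{HK}(JT_P,T_P)=l_{T_P}(T_P/JT_P)$ for every minimal prime $P$ of $J$; by the ``moreover'' clause of Lemma~\ref{lemma--multiplicity for perfect ideal generically numerically Roberts}, this follows once $T_P$ is shown to be a complete intersection.

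The crux of the argument is therefore to verify that $T_P$ is a complete intersection, and this is where both the complete-intersection closed fibre hypothesis on $T/\m T$ and the domain hypothesis on $R$ should be used. Theorem~\ref{theorem--flat map with complete intersection closed fibers} guarantees that \emph{every} fibre of $R\to T$ is a c.i., so $T_P$ is a flat local extension of $R_{\mathfrak p}$ (with $\mathfrak p=P\cap R$) whose closed fibre is a c.i., and by ascent of the c.i.~property along flat maps with c.i.~closed fibres, it suffices to show that $R_{\mathfrak p}$ is itself a c.i. I anticipate exploiting the presentation $T\cong R[[\underline z]]/(\underline f)$ (with $(\underline f)$ a regular sequence lifting the c.i.~presentation of $T/\m T$), which realises $T$ as a ``relative complete intersection'' over $R$, combined with the fact that $R$ is a complete local domain: for the generic prime $\mathfrak p=(0)$ the localisation $R_{\mathfrak p}=\Frac R$ is a field and hence trivially a c.i., while the remaining primes will require a careful analysis of the minimal primes of $J$ arising in the decomposition. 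Establishing the c.i.~property at such non-generic $\mathfrak p$ is the main technical obstacle, and is likely where the bulk of the argument---and the appeal to the domain hypothesis on $R$---will reside.
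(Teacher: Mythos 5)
Your setup is correct and mirrors the paper's strategy: apply Lemma~\ref{lemma--multiplicity for perfect ideal generically numerically Roberts} after verifying $J$ is perfect (via Lemma~\ref{lemma--perfect ideal}), and reduce to showing $T_P$ is numerically Roberts by showing it is a complete intersection. However, you leave a genuine gap at exactly the point where the domain hypothesis on $R$ does its work, and then speculate about ``a careful analysis'' at ``non-generic $\mathfrak p$'' that is in fact unnecessary.

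The missing observation is that \emph{every} minimal prime $P$ of $J$ contracts to $(0)$ in $R$. Indeed, $P/J$ is a minimal prime of $S$, and since $R\to S$ is flat local, $R_{(P/J)\cap R}\to S_{P/J}$ is again flat local, whence $\dim R_{(P/J)\cap R}\le \dim S_{P/J}=0$; because $R$ is a domain this forces $(P/J)\cap R=P\cap R=(0)$. Consequently $T_P/(P\cap R)T_P=T_P$ is a local ring of the generic fibre $T\otimes_R\Frac(R)$ of $R\to T$, and Theorem~\ref{theorem--flat map with complete intersection closed fibers} says directly that this fibre is a complete intersection. No ascent of the c.i.\ property and no case analysis on $\mathfrak p$ is needed---the case $\mathfrak p=(0)$, which you dismiss as the easy generic one, is the \emph{only} case. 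Once you supply this one-line observation the proof closes up and agrees with the paper's.
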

\begin{proof}
By Lemma \ref{lemma--perfect ideal}, we know $J$ is a perfect ideal. Let $P_1,\dots,P_m$ be the minimal primes of $J$. Since $R\to S=T/J$ is faithfully flat, we know $P_i\cap R=0$ because we assumed $R$ is a domain. Since $T/\m T$ is a complete intersection, by Theorem \ref{theorem--flat map with complete intersection closed fibers}, $T_{P_i}/(P_i\cap R)T_{P_i}=T_{P_i}$ is a complete intersection and hence numerically Roberts. Therefore all the hypotheses of Lemma \ref{lemma--multiplicity for perfect ideal generically numerically Roberts} are satisfied and the result follows.
\end{proof}

If we choose $\underline{x}$ to be a minimal reduction of $\n$ in Lemma \ref{lemma--multiplicity in terms of the limit}, then we see immediately that $e(S)=e(\underline{x}, S)$ is equal to the Hilbert-Kunz multiplicity $e_{HK}(J+\underline{x}, T)$. However, in general, the dimension of $T$ is large and we don't have control on the ideal $J+(\underline{x})$ either. So $e_{HK}(J+(\underline{x}), T)$ will not give us a good estimate of $e(S)$ in terms of the Hilbert-Kunz multiplicity of $R$. Motivated by Roberts's work \cite{RobertsIntersectionTheoremsMSRI}, \cite{RobertsMultiplicitiesandChernclassinLocalAlgebra}, in the next theorem we analyze $\lim_{e\to\infty}\frac{1}{p^{e\cdot \dim T}}\cdot e(\underline{x}, T^{(e)}\otimes_TS)$ in more details using local Chern characters. As a consequence we will see that for certain carefully chosen $\underline{x}=x_1,\dots, x_d$, we will have $$e(S)=e(\underline{x}, S)=e_{HK}(J, T/(\underline{x})T).$$ It turns out that this is crucial to get our desired estimate on $e(S)$ once we choose our factorization as in Lemma \ref{lemma--factoring maps with c.i. closed fibres}.

During the preparation of this paper, we were aware of a beautiful result \cite[Corollary 4.11]{SmirnovEquimultiplicityinHilbertKunztheory}. Based on this result and our Lemma \ref{lemma--multiplicity for perfect ideal generically numerically Roberts}, we are also able to give a completely elementary proof of the next theorem which could avoid the use of local Chern characters. Since we feel both proofs reveal some nature about the limit of multiplicities, we decide to keep both proofs here. 


\begin{theorem}
\label{theorem--limit in terms of local chern characters}
Let $(T,\n_T)$ be a complete local ring of characteristic $p>0$ with $T/\n_T$ a perfect field. Let $J\subseteq T$ be a perfect ideal and $S=T/J$. Set $n=\height J$ and $d=\dim T/J$ (thus $\dim T=n+d$), and let $\underline{x}=x_1,\dots,x_d$ be a system of parameters of $S$ that is also part of a system of parameters of $T$. Then we have
\begin{equation}
\label{equation--interchanging two limits}
\lim_{e\to\infty}\frac{1}{p^{e\cdot \dim T}}\cdot e(\underline{x}, T^{(e)}\otimes_TS)=\sum_Pe_{HK}(J, T/P)e(\underline{x}, T_P)
\end{equation}
where the sum is taken over all minimal primes $P$ of $(x_1,\dots,x_d)$ of dimension $n$.

In particular, if $T_P$ is Cohen-Macaulay for every such $P$, then we have
$$\lim_{e\to\infty}\frac{1}{p^{e\cdot \dim T}}\cdot e(\underline{x}, T^{(e)}\otimes_TS)=e_{HK}(J, T/(\underline{x})T).$$
\end{theorem}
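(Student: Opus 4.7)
The plan is to interpret the limit in (\ref{equation--interchanging two limits}) as the Dutta multiplicity of an auxiliary complex, and then evaluate this Dutta multiplicity via local Chern characters using Roberts's identity (\ref{equation--dutta multiplicity in terms of chern characters}). Let $G_\bullet$ be a minimal free resolution of $S=T/J$ over $T$; since $J$ is perfect of height $n$, $G_\bullet$ has length $n$. Let $K_\bullet = K_\bullet(\underline{x}, T)$ be the Koszul complex, of length $d$. Form $C_\bullet = G_\bullet \otimes_T K_\bullet$, a bounded complex of finite free $T$-modules of length $n+d=\dim T$ whose homology is $H_i(\underline{x}, S)$, hence of finite length.

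First I would show the left-hand side of (\ref{equation--interchanging two limits}) equals the Dutta multiplicity $\chi_\infty(C_\bullet)$. Since $C_\bullet$ has length $\dim T$ with finite-length homology, Theorem \ref{theorem--vanishing of higher Koszul} gives $\lim_e l(H_i(F^e(C_\bullet)))/p^{e\dim T}=0$ for every $i\geq 1$, and a direct double-complex computation shows $H_0(F^e(C_\bullet)) = T/(J+\underline{x})^{[p^e]}$. Hence $\chi_\infty(C_\bullet) = e_{HK}(J+\underline{x}, T)$. But the argument of Lemma \ref{lemma--multiplicity for perfect ideal generically numerically Roberts} also shows the LHS equals $e_{HK}(J+\underline{x}, T)$; thus LHS $=\chi_\infty(C_\bullet)$.

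Second, I would evaluate $\chi_\infty(C_\bullet)=\chern_{\dim T}(C_\bullet)[T]_{\dim T}$ using the multiplicativity of local Chern characters. Since $\chern(G_\bullet)$ has support of dimension at most $d$ and $\chern(K_\bullet)$ of dimension at most $n$, a degree and support count shows the only contribution to the top-degree piece is $\chern_n(G_\bullet)\cdot\chern_d(K_\bullet)[T]_{\dim T}$. To compute $\chern_d(K_\bullet)[T]_{\dim T}\in A_n(V(\underline{x}))_{\mathbb{Q}}$, I localize at each minimal prime $Q$ of $(\underline{x})$ with $\dim T/Q=n$ (so $\dim T_Q=d$ by catenarity, and $\underline{x}$ is a system of parameters of $T_Q$), and apply Roberts's identity to $K_\bullet(\underline{x}, T_Q)$, whose Dutta multiplicity is $e_{HK}(\underline{x}, T_Q)=e(\underline{x}, T_Q)$; this identifies the coefficient of $[T/Q]$ as $e(\underline{x}, T_Q)$. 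Then, by the projection formula for local Chern characters along the closed immersion $V(Q)\hookrightarrow \Spec T$, we have $\chern_n(G_\bullet)[T/Q] = \chern_n(G_\bullet\otimes_T T/Q)[T/Q]_n$ inside $A_0(\{\n_T\})_{\mathbb{Q}}$. Since $G_\bullet\otimes_T T/Q$ is a bounded complex of finite free $T/Q$-modules of length $n=\dim T/Q$ with finite-length homology (because $J+Q$ is $\n_T$-primary), Roberts's identity applied over $T/Q$ gives $\chern_n(G_\bullet\otimes T/Q)[T/Q]_n = \chi_\infty(G_\bullet\otimes T/Q)$; and a further application of Theorem \ref{theorem--vanishing of higher Koszul} over $T/Q$ reduces this to $\lim_e l((T/Q)/J^{[p^e]}(T/Q))/p^{en} = e_{HK}(J, T/Q)$.

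Assembling these gives $\chi_\infty(C_\bullet) = \sum_Q e(\underline{x}, T_Q)\cdot e_{HK}(J, T/Q)$, which is (\ref{equation--interchanging two limits}). For the ``in particular'' clause, when each $T_Q$ is Cohen-Macaulay, $\underline{x}$ forms a regular sequence on $T_Q$ so $e(\underline{x}, T_Q) = l_{T_Q}(T_Q/(\underline{x})T_Q)$, and the associativity formula for Hilbert-Kunz multiplicity applied to the $T$-module $T/(\underline{x})T$ rewrites the sum as $e_{HK}(J, T/(\underline{x})T)$. The main obstacle I expect is verifying the compatibility of local Chern characters with localization (for the computation of $\chern_d(K_\bullet)[T]_{\dim T}$) and with restriction to closed subschemes (for the projection formula used to evaluate $\chern_n(G_\bullet)[T/Q]$); both rest on the formalism of Fulton's intersection theory and Roberts's framework.
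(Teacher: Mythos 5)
Your proposal is essentially the paper's first proof: both use local Chern characters on the tensor product complex $G_\bullet\otimes K_\bullet$, isolate the top-degree piece, factor it as $\chern_n(G_\bullet)\cdot\chern_d(K_\bullet)[T]_{n+d}$, and identify $\chern_n(G_\bullet)[T/P]_n = e_{HK}(J, T/P)$ via Roberts's identity and Theorem~\ref{theorem--vanishing of higher Koszul}. The only notable deviation is in evaluating $\chern_d(K_\bullet)[T]_{n+d}$: the paper uses the explicit formula $(x_1)\cap\cdots\cap(x_d)\cap[T]_{n+d}=\sum_j(-1)^j[H_j(\underline{x},T)]_n$ and the associativity formula, whereas you localize at each minimal prime $Q$ of $(\underline{x})$ and invoke Roberts's identity over $T_Q$ --- which works but technically requires passing to the completion $\widehat{T_Q}$ since the Dutta-multiplicity identity is stated for complete local rings.
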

\begin{proof}
We first note that, if $T_P$ is Cohen-Macaulay, then $e(\underline{x}, T_P)=l_{T_P}(T_P/(\underline{x})T_P)$. Therefore the second assertion follows immediately from the first one by the associativity formula for Hilbert-Kunz multiplicities. Thus below we aim to prove (\ref{equation--interchanging two limits}).

\vspace{1em}

\noindent{\it\textbf{First proof of (\ref{equation--interchanging two limits})}}: Fix a minimal free resolution $G_\bullet$ of $T/J$ over $T$. Let $K_\bullet=K_\bullet(\underline{x}, T)$ be the Koszul complex with respect to $\underline{x}=x_1,\dots,x_d$. By the Koszul characterization of multiplicity, we have
\begin{eqnarray*}
\frac{1}{p^{e\cdot \dim T}}\cdot e(\underline{x}, T^{(e)}\otimes_TS)&=&\frac{1}{p^{e\cdot \dim T}}\cdot\chi(T^{(e)}\otimes K_\bullet\otimes G_\bullet)\\
&=&\frac{1}{p^{e(n+d)}}\sum_{j=0}^{n+d}\chern_j(F^e_T(K_\bullet\otimes G_\bullet))[T]_j\\
&=&\frac{1}{p^{e(n+d)}}\sum_{j=0}^{n+d}\chern_j(K_\bullet\otimes G_\bullet)[F^e_*T]_j\\
&=&\frac{1}{p^{e(n+d)}}\sum_{j=0}^{n+d}p^{ej}\chern_j(K_\bullet\otimes G_\bullet)[T]_j
\end{eqnarray*}
where the equality on the second line is by the local Riemann-Roch formula and the equality on the third line is by the projection formula (note that we also used repeatedly here that $T/\n_T$ is perfect).

Next we observe that, when $e\to\infty$, the only term in $\frac{1}{p^{e(n+d)}}\sum_{j=0}^{n+d}p^{ej}\chern_j(K_\bullet\otimes G_\bullet)[T]_j$ that can survive is the top term, i.e., when $j=n+d$. Hence we have:
$$\lim_{e\to\infty}\frac{1}{p^{e\cdot \dim T}}\cdot e(\underline{x}, T^{(e)}\otimes_TS)=\chern_{n+d}(K_\bullet\otimes G_\bullet)[T]_{n+d}=\sum_j\chern_{n+d-j}(G_\bullet)\chern_{j}(K_\bullet)[T]_{n+d}$$
Since $\chern(K_\bullet)\eta=\chern_d(K_\bullet)\eta=(x_1)\cap(x_2)\cap\cdots\cap(x_d)\cap \eta$ for every cycle $\eta$ (for example, see \cite[Corollary 12.3.2]{RobertsMultiplicitiesandChernclassinLocalAlgebra}) and the computation of intersection with divisors can be explicitly expressed using Koszul homologies \cite[Proposition 5.2.11]{RobertsMultiplicitiesandChernclassinLocalAlgebra}, we have
\begin{eqnarray}
\label{equation--intersection with divisors}
\lim_{e\to\infty}\frac{1}{p^{e\cdot \dim T}}\cdot e(\underline{x}, T^{(e)}\otimes_TS)&=&\chern_n(G_\bullet)\chern_d(K_\bullet)[T]_{n+d}\\
&=&\chern_n(G_\bullet)((x_1)\cap\cdots\cap(x_d)\cap[T]_{n+d})\notag\\
&=&\chern_n(G_\bullet)\sum_j (-1)^j[H_j(\underline{x}, T)]_n\notag\\
&=&\chern_n(G_\bullet)\sum_P \chi(\underline{x}, T_P)[T/P]_n\notag\\
&=&\sum_Pe(\underline{x}, T_P)(\chern_n(G_\bullet)[T/P]_n)\notag
\end{eqnarray}
where the sum in the last two lines is taken over all minimal primes $P$ of $(x_1,\dots,x_d)$ of dimension $n$.

Finally, by \cite[Theorem 12.7.1]{RobertsMultiplicitiesandChernclassinLocalAlgebra} (or we can run the argument in the beginning of our proof), we have $\chern_n(G_\bullet)[T/P]_n=\chi_\infty(\overline{G}_\bullet)$ where $\overline{G}_\bullet$ denote the the complex $G_\bullet\otimes_TT/P$. At this point, we observe that $\overline{G}_\bullet$ is a complex of finite free $\overline{T}=T/P$-modules with finite length homology, and its length as a complex is exactly $n=\dim T/P$ (remember that $J$ is a perfect ideal of height $n$ and $G_\bullet$ is a minimal free resolution of $T/J$ over $T$). Hence by Theorem \ref{theorem--vanishing of higher Koszul}, we have $$\chern_n(G_\bullet)[T/P]_n=\chi_\infty(\overline{G}_\bullet)=\lim_{e\to\infty}\frac{l\left(H_0(F^e_{\overline{T}}(\overline{G}_\bullet))\right)}{p^{en}}
=\lim_{e\to\infty}\frac{l\left(\overline{T}/J^{[p^e]}\overline{T}\right)}{p^{e\cdot\dim \overline{T}}}=e_{HK}(J, T/P).$$
Now it is clear that (\ref{equation--interchanging two limits}) follows from (\ref{equation--intersection with divisors}).

\vspace{1em}

\noindent{\it\textbf{Second proof of (\ref{equation--interchanging two limits})}}: We will prove that
\begin{equation}
\label{equation--eHK(J+x) is the sum}
e_{HK}(J+\underline{x}, T)=\sum_Pe_{HK}(J, T/P)e(\underline{x}, T_P).
\end{equation}
This will establish (\ref{equation--interchanging two limits}) by (\ref{equation--equality on multiplicities and the crucial limit}) in our Lemma \ref{lemma--multiplicity for perfect ideal generically numerically Roberts}. To see (\ref{equation--eHK(J+x) is the sum}), note that by \cite[Corollary 4.11]{SmirnovEquimultiplicityinHilbertKunztheory} (applied to $I=(\underline{x})$ and $M=T$), we always have:
\begin{equation}
\label{equation--chains}
\sum_Pe_{HK}(J, T/P)e(\underline{x}, T_P)=\sum_Pe_{HK}(J, T/P)e_{HK}(\underline{x}, T_P)=\lim_{e\to\infty}\frac{1}{p^{en}}e_{HK}(J^{[p^e]}+\underline{x}, T).
\end{equation}
Next we recall that if $M$ is a finitely generated $T$-module of finite projective dimension, then $\Tor_i^T(M, T^{(e)})=0$ for all $i\geq 1$ (see \cite[Th\'{e}or\`{e}me (I.7)]{PeskineSzpiroDimensionProjective}). This implies that if $G_\bullet$ is a finite free resolution of $T/J$, then $F_T^{e}(G_\bullet)$ is a finite free resolution of $F_T^e(T/J)\cong T/J^{[p^e]}$. In particular, $J^{[p^e]}$ is a perfect ideal in $T$ for every $e$. Therefore we can apply (\ref{equation--equality on multiplicities and the crucial limit}) in Lemma \ref{lemma--multiplicity for perfect ideal generically numerically Roberts} to $J^{[p^e]}$ for every $e$ to obtain:
$$e_{HK}(J^{[p^e]}+\underline{x}, T)=\sum_Qe(\underline{x}, T/Q)e_{HK}(J^{[p^e]}, T_Q)$$ where the sum is taken over all minimal primes $Q$ of $J$ such that $\dim T/Q=d$. Since $\height J=n$, we have
$$\sum_Qe(\underline{x}, T/Q)e_{HK}(J^{[p^e]}, T_Q)=p^{en}\sum_Qe(\underline{x}, T/Q)e_{HK}(J, T/Q)=p^{en}e_{HK}(J+\underline{x}, T)$$ where the second equality is by (\ref{equation--equality on multiplicities and the crucial limit}) again. Therefore, the sequence $$\{\frac{1}{p^{en}}e_{HK}(J^{[p^e]}+\underline{x}, T)\}$$ is a constant sequence! Now (\ref{equation--eHK(J+x) is the sum}) follows immediately from (\ref{equation--chains}).
\end{proof}

\begin{remark}
In the second proof of Theorem \ref{theorem--limit in terms of local chern characters}, we crucially used \cite[Corollary 4.11]{SmirnovEquimultiplicityinHilbertKunztheory}, which in turn follows from some delicate ``uniform convergence" results in that paper (in the spirit of \cite{TuckerFsignatureExists}). Let us also point out that, once we combined (\ref{equation--equality on multiplicities and the crucial limit}) in Lemma \ref{lemma--multiplicity for perfect ideal generically numerically Roberts} with \cite[Theorem 5.17]{SmirnovEquimultiplicityinHilbertKunztheory}, we find that perfect ideals in complete (unmixed) local rings {\it satisfy colon capturing} of tight closure in the sense of \cite[Definition 5.15]{SmirnovEquimultiplicityinHilbertKunztheory}. We suspect that some version of this should be known in the literature, as tight closure of ideals of finite projective dimension (and more generally, of finite  phantom projective dimension) has been studied intensively \cite{HochsterHunekePhantomhomology}, \cite{Aberbachfinitephantomprojectivedimension}. However we have not been able to find a precise reference at the moment.
\end{remark}

We need one more lemma on the choice of system of parameters. We recall that a sequence of elements $y_1,\dots,y_n \in \m$ of $(R,\m)$ is called a {\it filter regular sequence} of $R$ if $y_i$ is not contained in any associated prime of $R/(y_1,\dots,y_{i-1})$ except $\m$ for every $i$. Standard prime avoidance shows that filter regular sequence (of any length) always exists.

\begin{lemma}
\label{lemma--choose generically CM sop}
Let $(T,\n_T)$ be a complete local ring with $T/\n_T$ an infinite field. Let $J\subseteq T$ be an ideal and $S=T/J$. Suppose $\height J=n\geq 1$ and $\dim S=d$. Then there exists $x_1,\dots,x_d\in \n_T$ such that
\begin{enumerate}
\item $x_1,\dots,x_d$ is part of a system of parameters of $T$;
\item $x_1,\dots,x_d$ is a minimal reduction of $\n_TS$ in $S$;
\item $T_P$ is Cohen-Macaulay for every minimal prime $P$ of $(x_1,\dots,x_d)$.
\end{enumerate}
\end{lemma}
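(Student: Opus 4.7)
The strategy is to choose $x_1, \ldots, x_d$ as sufficiently generic $k$-linear combinations of generators of $\n_T$, using iterative prime avoidance (available since $T/\n_T$ is infinite) together with a Northcott--Rees argument for minimal reductions. The key geometric input is that the non-Cohen--Macaulay locus $V(\mathfrak{b})$ of $T$ is closed in $\Spec T$ (since $T$ is complete, hence excellent), and that every minimal prime of $\mathfrak{b}$ automatically has height at least $1$, because any Artinian local ring is Cohen--Macaulay.

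Let $P_1, \ldots, P_s$ denote the minimal primes of $\mathfrak{b}$ with $\height P_i \leq d$, and fix generators $z_1, \ldots, z_m$ of $\n_T$. I would take $x_i = \sum_j c_{ij} z_j$ with $c_{ij} \in T$ lifting a generic element $(\bar c_{ij}) \in k^{md}$, and show that a dense open choice satisfies three conditions simultaneously: \emph{(i)} $x_1,\ldots,x_d$ is part of a system of parameters of $T$; \emph{(ii)} for each $P_i$, the images $\bar x_1,\ldots,\bar x_d$ in the complete local domain $T/P_i$ satisfy $\dim (T/P_i)/(\bar x_1,\ldots,\bar x_d) = \max(\dim T/P_i - d,\,0)$; and \emph{(iii)} the images in $S$ form a minimal reduction of $\n_T S$. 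Conditions (i) and (ii) are handled by standard iterative prime avoidance (in $T$, and in each of the finitely many $T/P_i$), while (iii) is Northcott--Rees; each is an open-dense condition on $(\bar c_{ij}) \in k^{md}$, so their intersection is nonempty. Conditions (1) and (2) of the lemma follow immediately from (i) and (iii).

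For condition (3), suppose for contradiction that some minimal prime $P$ of $(x_1,\ldots,x_d)$ has $T_P$ non-Cohen--Macaulay. Then $P \supseteq P_j$ for some minimal prime $P_j$ of $\mathfrak{b}$; since $\height P \leq d$ by Krull's altitude theorem, we must have $j \in \{1,\ldots,s\}$. Minimality of $P$ in $T$ implies that $P/P_j$ is a minimal prime of $(\bar x_1,\ldots,\bar x_d)$ in $T/P_j$. If $\dim T/P_j \geq d$, then (ii) combined with the fact that $T/P_j$ is a complete local domain (equidimensional and universally catenary) forces $\height_{T/P_j}(P/P_j) = d$, hence $\height_T P \geq \height P_j + d \geq 1 + d > d$, contradicting $\height P \leq d$. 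If $\dim T/P_j < d$, then (ii) makes $(\bar x_1,\ldots,\bar x_d)$ an $\n_{T/P_j}$-primary ideal, so $P = \n_T$; but then $\dim T/(x_1,\ldots,x_d) = 0$, contradicting (i), since $n = \dim T - d \geq 1$.

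The main technical obstacle is enforcing (ii) simultaneously across the finitely many complete local domains $T/P_i$ while also satisfying (i) and (iii); this requires careful bookkeeping of the relevant open-dense subsets of $k^{md}$, but is routine once one notes that each involves only prime avoidance against finitely many primes (strict subideals of $\n_T$, or of $\n_{T/P_i}$). The crucial numerical input is the strict inequality $\height P_j \geq 1$, coming automatically from $P_j$ being in the non-Cohen--Macaulay locus, which is what produces the contradiction $\height P > d$ in Case A.
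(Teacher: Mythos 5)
Your proof is correct and rests on the same two key observations the paper uses: that the non-Cohen--Macaulay locus has height $\geq 1$ (since Artinian local rings are trivially Cohen--Macaulay), and that a minimal prime of a $d$-generated ideal has height $\leq d$ by Krull's altitude theorem. The route you take differs from the paper's in one genuine way. The paper works with the defining ideal $I$ of the non-CM locus directly: it observes that if $\underline{x}$ is a filter regular sequence on $T/I$ then $\height(I + (\underline{x})) \geq \min\{\height I + d, \dim T\} \geq d+1$, and then any minimal prime $P$ of $(\underline{x})$ with $T_P$ not CM would contain $I + (\underline{x})$ and so have height $\geq d+1 > d$. You instead decompose into the minimal primes $P_j$ of the non-CM locus, impose the dimension-cutting condition in each complete local domain $T/P_j$ separately, and then use catenarity of $T/P_j$ and chain concatenation to get $\height_T P \geq \height P_j + d \geq d+1$. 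Both are valid; the paper's version avoids invoking catenarity and works with a single ideal rather than a finite family of quotient domains, so it is somewhat more economical, but your version makes the dimension bookkeeping in each irreducible component more explicit. The construction step is identical in substance (iterative prime avoidance against the minimal primes of $T$, the relevant associated primes, and the degree-one parts of the minimal primes of $\gr_\n S$ for Northcott--Rees), though your ``open-dense subset of $k^{md}$'' framing should be understood as shorthand for this inductive avoidance rather than a literal claim about a single Zariski-open set chosen all at once. One minor misstatement: in Case B you write $n = \dim T - d$, but in general one only has $n = \height J \leq \dim T - d$; what you need and what is true is $\dim T - d \geq n \geq 1$, so the contradiction still goes through.
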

\begin{proof}
We first notice that condition (3) is satisfied if $x_1,\dots,x_d$ is a filter regular sequence on $T/I$, where $I$ is the defining ideal of the non-Cohen-Macaulay locus of $T$. The reason is as follows. Since $\height I\geq 1$ and $\dim T\geq n+d\geq 1+d$, $I+(x_1,\dots,x_d)$ has height at least $\min\{\height I+d, \dim T\}\geq d+1$. Now suppose $P$ is a minimal prime of $(x_1,\dots,x_d)$. If $T_P$ is not Cohen-Macaulay, then $I\subseteq P$ and thus $I+(x_1,\dots,x_d)\subseteq P$. But then we have $$d\geq \height P\geq \height(I+(x_1,\dots,x_d))\geq d+1$$ which is a contradiction.

The remaining argument is standard. We can pick $x_1,\dots,x_d$ inductively and thus it suffices to construct $x_1$. First of all we want $x_1$ not contained in any minimal prime of $T$, and $x_1$ not contained in any associated prime of $T/I$ except possibly $\n_T$. There are only finitely many primes that we need to avoid. Call these $Q_1,\dots,Q_m$. Next we note that $\n_T/\n_T^2$ is a finite dimensional vector space over an infinite field $T/\n_T$. It is clear that $\{(Q_i+\n_T^2)/\n_T^2\}_{i=1}^{m}$, as well as the degree one elements of each minimal prime of $$\gr_\n S\cong \frac{T}{\n_T}\oplus \frac{\n_T}{\n_T^2+J}\oplus\frac{\n_T^2+J}{\n_T^3+J}\oplus\cdots $$ form a finite set of finite dimensional subspaces of $\n_T/\n_T^2$. Since none of these subspaces is equal to the whole $\n_T/\n_T^2$ and $T/\n_T$ is infinite, we can pick $x_1\in\n_T/\n_T^2$ that is not contained in all these subspaces. But this is precisely saying that $x_1$ is part of a system of parameter on $T$, is part of a minimal reduction of $S=T/J$, and is part of a filter regular sequence on $T/I$. Therefore we are done by the discussion above.
\end{proof}

We are now ready to prove our first inequality on multiplicities under flat local extensions between local rings.

\begin{theorem}
\label{theorem--inequalities on multiplicities in terms of difference of embdim}
Let $(R,\m)\to (S,\n)$ be a flat local map between complete local rings of characteristic $p>0$ with $\dim R=\dim S=d$. Suppose $R$ is a domain and $S/\n$ is algebraically closed. If $\edim S-\edim R=c$, then we have $$e(R)\leq \frac{c!}{2^c}e(S).$$
\end{theorem}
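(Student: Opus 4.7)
The plan is to combine the structural factorization from Lemma~\ref{lemma--factoring maps with c.i. closed fibres}, the tailored system of parameters from Lemma~\ref{lemma--choose generically CM sop}, and the Hilbert-Kunz/Chern character computation of Theorem~\ref{theorem--limit in terms of local chern characters}, and finally close the estimate by invoking Lech's own case of complete intersection closed fibers. The case $c=0$ is already covered by that case of Lech, so assume $c\geq 1$. We first factor $R\to S$ as $R\to T\to S=T/J$ with $T/\m T$ a complete intersection of dimension $c$, $\pd_T S=c$, $J$ a perfect ideal of height $c$, and $J\subseteq\n_T^2$; in particular $\dim T=d+c$. Since $T/\n_T=S/\n$ is algebraically closed, Lemma~\ref{lemma--choose generically CM sop} yields $\underline{x}=x_1,\dots,x_d\in\n_T$ that is part of a system of parameters of $T$, a minimal reduction of $\n S$ in $S$, and with $T_P$ Cohen-Macaulay for every minimal prime $P$ of $(\underline{x})$.

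Let $\overline{T}=T/(\underline{x})T$ (of dimension $c$, with maximal ideal $\overline{\n}$), and write $\overline{J}$ for the image of $J$ in $\overline{T}$. Lemma~\ref{lemma--multiplicity in terms of the limit} gives
$$e(S)=e(\underline{x},S)=\lim_{e\to\infty}\frac{e(\underline{x},T^{(e)}\otimes_T S)}{p^{e(d+c)}},$$
and Theorem~\ref{theorem--limit in terms of local chern characters}, in its Cohen-Macaulay form, identifies this limit with $e_{HK}(\overline{J},\overline{T})$. Since $J\subseteq\n_T^2$ forces $\overline{J}\subseteq\overline{\n}^2$ as $\overline{\n}$-primary ideals of $\overline{T}$, the standard Hilbert-Kunz vs. Hilbert-Samuel inequality in the $c$-dimensional ring $\overline{T}$ gives
$$e(S)=e_{HK}(\overline{J},\overline{T})\ \geq\ e_{HK}(\overline{\n}^2,\overline{T})\ \geq\ \frac{e(\overline{\n}^2,\overline{T})}{c!}\ =\ \frac{2^c\,e(\overline{T})}{c!}.$$

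It remains to show $e(\overline{T})\geq e(R)$. Because $\overline{T}/\overline{\n}$ is infinite, pick $\underline{z}=z_1,\dots,z_c\in\n_T$ that lift a minimal reduction of $\overline{\n}$; then $(\underline{x},\underline{z})$ is a system of parameters of $T$. Splitting the associativity formula for multiplicities along $(\underline{x},\underline{z})$, and using the Cohen-Macaulayness of each $T_P$ for $P$ a minimal prime of $(\underline{x})$ of height $d$, we obtain $e(\overline{T})=e(\underline{z},\overline{T})=e((\underline{x},\underline{z}),T)$. Since $(\underline{x},\underline{z})\subseteq\n_T$ with both $\n_T$-primary, $e((\underline{x},\underline{z}),T)\geq e(\n_T,T)=e(T)$. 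Finally, because $T/\m T$ is a complete intersection by construction, Lech's original case~(2) applied to $R\to T$ gives $e(R)\leq e(T)$. Chaining produces $e(R)\leq e(T)\leq e(\overline{T})\leq (c!/2^c)\,e(S)$. The main obstacle is precisely the bridge $e(\overline{T})\geq e(R)$: the map $R\to\overline{T}$ is no longer flat, so it has to be routed through $T$, and Lemma~\ref{lemma--choose generically CM sop} is what makes this possible by enabling associativity to convert $e(\overline{T})$ into a multiplicity inside $T$.
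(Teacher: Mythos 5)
Your proposal follows the paper's proof almost verbatim: same factorization through $T$ via Lemma~\ref{lemma--factoring maps with c.i. closed fibres}, same choice of $\underline{x}$ via Lemma~\ref{lemma--choose generically CM sop}, same identification $e(S)=e_{HK}(J,T/(\underline{x})T)$ via Lemma~\ref{lemma--multiplicity in terms of the limit} and Theorem~\ref{theorem--limit in terms of local chern characters}, same use of $J\subseteq\n_T^2$ and the bound $e_{HK}\geq e/c!$, and the same final appeal to Lech's complete-intersection-fiber case for $e(R)\leq e(T)$. The one place where you diverge is the step $e(T/(\underline{x})T)\geq e(T)$: the paper simply cites a known general result of Singh~\cite[Corollary 4]{SinghEffectofpermissibleblowup}, valid for any local ring and any partial system of parameters, whereas you re-derive it through the two-block associativity formula and the Cohen-Macaulayness of $T_P$ furnished by Lemma~\ref{lemma--choose generically CM sop}.

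Two remarks on your version of that step. First, the asserted identity $e(\overline{T})=e((\underline{x},\underline{z}),T)$ is not quite what the associativity formula gives in general: the two-block formula sums over minimal primes $P$ of $(\underline{x})$ satisfying \emph{both} $\dim T/P=c$ and $\height P=d$, while the additivity formula for $e(\underline{z},\overline{T})$ sums over the a priori larger set of $P$ with $\dim T/P=c$. Since $T_P$ is Cohen--Macaulay for every minimal prime $P$ of $(\underline{x})$ (Lemma~\ref{lemma--choose generically CM sop}), the terms over the associativity set match, and the extra terms in the additivity sum are nonnegative, so what actually follows is only the inequality $e(\overline{T})\geq e((\underline{x},\underline{z}),T)$ unless one also checks that $T$ is equidimensional; fortunately the inequality is all you need. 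Second, your closing commentary misattributes the role of Lemma~\ref{lemma--choose generically CM sop}: in the paper's argument the Cohen--Macaulayness of $T_P$ is needed only for Theorem~\ref{theorem--limit in terms of local chern characters} (to obtain $e(S)=e_{HK}(J,T/(\underline{x})T)$), not for the bridge $e(T/(\underline{x})T)\geq e(T)$, which Singh's result handles without it. Citing Singh is cleaner and avoids the equidimensionality caveat entirely.
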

\begin{proof}
We first note that $c\geq 0$ by Theorem \ref{theorem--regularity defect of flat local extension}. Applying Lemma \ref{lemma--factoring maps with c.i. closed fibres}, we obtain $$(R,\m)\to (T,\n_T)\to (S,\n)=T/J$$ such that $T/\m T$ is a complete intersection, $J\subseteq \n_T^2$ is a perfect ideal of $T$, and $\pd_TS=c$. If $c=0$, then $S/\m S$ is a complete intersection by Corollary \ref{corollary--difference of embedding dimension <=1}. Hence $e(R)\leq e(S)$ follows from \cite{Lechinequalitiesofflatcouples} (see \cite[Theorem 1]{HerzogBLechHironakainequalities}). We thus assume $c\geq 1$, which implies $\height J=\pd_TS\geq 1$. Since $S/\n=T/\n_T$ is algebraically closed and $\height J\geq 1$, by Lemma \ref{lemma--choose generically CM sop} we can pick a minimal reduction $\underline{x}=x_1,\dots,x_d$ of $\n$ such that $x_1,\dots,x_d$ is part of a system of parameters of $T$ and $T_P$ is Cohen-Macaulay for every minimal prime of $(x_1,\dots,x_d)$. Now the hypotheses of Lemma \ref{lemma--multiplicity in terms of the limit} and Theorem \ref{theorem--limit in terms of local chern characters} are both satisfied. Applying them we obtain
\begin{equation}
\label{equation--interchanging two ideals e(S)=e_HK(J)}
e(S)=e(\underline{x}, S)=\lim_{e\to\infty}\frac{1}{p^{e\cdot \dim T}}\cdot e(\underline{x}, T^{(e)}\otimes_TS)=e_{HK}(J, T/(\underline{x})T).
\end{equation}
Because $J$ is perfect, we have $$\dim T/(\underline{x})T=\dim T-d=\pd_TS=c.$$
Since $J\subseteq \n_T^2$, we have
\begin{equation}
\label{equation--intermediate comparison of e_HK and e}
e_{HK}(J, T/(\underline{x})T)\geq e_{HK}(\n_T^2, T/(\underline{x})T)\geq \frac{1}{c!}e(\n_T^2, T/(\underline{x})T)=\frac{2^c}{c!}e(T/(\underline{x})T).
\end{equation}
Finally, it is well known that $e(T/(\underline{x})T)\geq e(T)$ when $\underline{x}$ is part of a system of parameters of $T$ \cite[Corollary 4]{SinghEffectofpermissibleblowup}. But $(R,\m)\to (T,\n_T)$ is a flat local map with $T/\m T$ a complete intersection, so $e(R)\leq e(T)$ follows from \cite{Lechinequalitiesofflatcouples} (see \cite[Theorem 1]{HerzogBLechHironakainequalities}). Therefore putting (\ref{equation--interchanging two ideals e(S)=e_HK(J)}) and (\ref{equation--intermediate comparison of e_HK and e}) together we get
$$e(S)\geq \frac{2^c}{c!}e(T/(\underline{x})T)\geq\frac{2^c}{c!}e(T)\geq \frac{2^c}{c!}e(R).$$
This finishes the proof of our theorem.
\end{proof}

\subsection{An inequality on multiplicities when $\edim S-\edim R$ is large} In this subsection we prove another inequality on the behavior of multiplicities under flat local extension. It gives a control on $e(S)$ when $\edim S-\edim R$ is large. We begin with some lemmas.
\begin{lemma}
\label{lemma--estimate on e_S involving higher Euler character}
Let $(S,\n)$ be a local ring of dimension $d$ and $N$ be a finitely generated $S$-module. Then for every system of parameters $\underline{x}=x_1,\dots, x_d$ of $S$, we have $$e(\underline{x}, N)\geq \nu_S(\n N)+(1-d)\nu_S(N)-\chi_1(\underline{x}, N).$$
\end{lemma}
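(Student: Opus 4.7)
The plan is to start from the Koszul characterization of multiplicity and decompose it to isolate the length $l_S(N/\underline{x}N)$, which I will then bound from below via Nakayama-type considerations on the number of generators.

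First I would write
\[
e(\underline{x}, N) \;=\; \chi(\underline{x}, N) \;=\; l_S(N/\underline{x}N) \;-\; \chi_1(\underline{x}, N),
\]
so the inequality reduces to the purely length-theoretic claim
\[
l_S(N/\underline{x}N) \;\geq\; \nu_S(\n N) + (1-d)\,\nu_S(N).
\]
To attack this, I would split $l_S(N/\underline{x}N) = \nu_S(N) + l_S(\n N/\underline{x}N)$ (using $N/\n N$ has length $\nu_S(N)$), so it now suffices to prove
\[
l_S(\n N/\underline{x}N) \;\geq\; \nu_S(\n N) - d\,\nu_S(N).
\]

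Next, I would estimate the number of generators of $\n N/\underline{x}N$. From the short exact sequence
\[
0 \to \underline{x}N/(\underline{x}N \cap \n^2 N) \to \n N/\n^2 N \to \n N/(\underline{x}N + \n^2 N) \to 0,
\]
Nakayama's lemma identifies the outer terms with $S/\n$-vector spaces of dimensions $\nu_S(\n N/\underline{x}N)$ and $\nu_S(\n N)$ respectively. Hence
\[
\nu_S(\n N/\underline{x}N) \;=\; \nu_S(\n N) - \dim_{S/\n}\bigl(\underline{x}N/(\underline{x}N \cap \n^2 N)\bigr) \;\geq\; \nu_S(\n N) - \nu_S(\underline{x}N).
\]
Since $\underline{x}N$ is a quotient of $N^d$ via $(n_1,\dots,n_d)\mapsto \sum x_i n_i$, we have $\nu_S(\underline{x}N) \leq d\,\nu_S(N)$, giving $\nu_S(\n N/\underline{x}N) \geq \nu_S(\n N) - d\nu_S(N)$. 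Combining with the trivial inequality $l_S(\n N/\underline{x}N) \geq \nu_S(\n N/\underline{x}N)$ completes the chain.

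There is no substantial obstacle here; the lemma follows purely from the additivity of length on the Koszul homology together with Nakayama's lemma. The only bookkeeping point worth double-checking is the sign convention for $\chi_1(\underline{x}, N)$ as defined earlier in the paper, so that $\chi(\underline{x},N) = l_S(H_0) - \chi_1(\underline{x},N)$ holds with the correct sign; this is exactly the definition given just after the Koszul multiplicity formula in Section~2.
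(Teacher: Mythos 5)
Your proof is correct and takes essentially the same route as the paper: both start from the Koszul decomposition $e(\underline{x},N) = l_S(N/\underline{x}N) - \chi_1(\underline{x},N)$, both exploit the containment $\n\underline{x}N \subseteq \n^2 N$ together with the bound $\nu_S(\underline{x}N) \leq d\,\nu_S(N)$, and both arrive at the same final estimate. The only difference is cosmetic bookkeeping: the paper writes $l_S(N/\underline{x}N) = l_S(N/\n\underline{x}N) - l_S(\underline{x}N/\n\underline{x}N)$ and bounds the first term by $l_S(N/\n^2 N) = \nu_S(\n N) + \nu_S(N)$, while you split off $\nu_S(N) = l_S(N/\n N)$ first and then bound $l_S(\n N/\underline{x}N) \geq \nu_S(\n N/\underline{x}N)$ via the short exact sequence. (One small wording slip: in your sentence about identifying ``the outer terms,'' you mean the middle and right terms of the short exact sequence have lengths $\nu_S(\n N)$ and $\nu_S(\n N/\underline{x}N)$ respectively; the displayed equation that follows is correct.)
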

\begin{proof}
By the Koszul characterization of multiplicity, we have
\begin{eqnarray*}
e(\underline{x}, N)&=&\sum_{i=0}^d(-1)^il_S(H_i(\underline{x}, N))\\
&=&l_S\left(\frac{N}{(\underline{x})N}\right)-\chi_1(\underline{x}, N)\\
&=&l_S\left(\frac{N}{\n(\underline{x})N}\right)-l_S\left(\frac{(\underline{x})N}{\n(\underline{x})N}\right)-\chi_1(\underline{x}, N)\\
&\geq& l_S(N/\n^2N)-d\cdot\nu_S(N)-\chi_1(\underline{x}, N)\\
&=& \nu_S(\n N)+(1-d)\nu_S(N)-\chi_1(\underline{x}, N)
\end{eqnarray*}
where the only $\geq$ is because $\n(\underline{x})\subseteq \n^2$ and we have a natural surjection $$\left(\frac{N}{\n N}\right)^{\oplus d}\twoheadrightarrow \frac{(\underline{x})N}{\n(\underline{x})N}.$$
This finishes the proof of the lemma.
\end{proof}

The next lemma is \cite[Proposition 4.2.3]{HanesThesis}. We give a proof for completeness. We note that it was assumed that $S$ is a flat local extension of $R$ in \cite{HanesThesis}. However, this condition is unnecessary in the proof.

\begin{lemma}
\label{lemma--Hanes' lemma}
Let $(R,\m)\rightarrow (S,\n)$ be a local map and let $M$ be a finitely generated module over $R$. Then:
$$\nu_S (\n M')\geq\nu_R(\m M)+(\edim S-\edim R)\cdot\nu_R(M)$$
where $M'=S\otimes_RM$.
\end{lemma}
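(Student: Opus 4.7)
The plan is to convert the inequality into a dimension count via a minimal presentation of $M$. First, choose a minimal presentation $K \hookrightarrow R^n \twoheadrightarrow M \to 0$, so that $n=\nu_R(M)$; since the generators of $M$ are minimal, Nakayama forces $K\subseteq \m R^n$. Tensoring with $S$ (right exactness) yields $S^n \twoheadrightarrow M' \to 0$ with kernel $KS$ (the image of $K\otimes_R S \to S^n$) satisfying $KS \subseteq \m S^n \subseteq \n S^n$. Straightforward bookkeeping with these inclusions gives
$$\n M' = \n S^n/KS \qquad\text{and}\qquad \n M'/\n^2 M' \;\cong\; \n S^n/(\n^2 S^n + KS) \;\cong\; (\n/\n^2)^n \big/ \overline{KS},$$
where $\overline{KS}$ denotes the image of $KS$ in $(\n/\n^2)^n$. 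Hence $\nu_S(\n M') = n\cdot \edim S - \dim_\kappa \overline{KS}$, writing $\kappa := S/\n$.

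Next I would bound $\dim_\kappa \overline{KS}$ from above. Since $KS$ is the $S$-span of the image of $K$ in $S^n$ and $(\n/\n^2)^n$ is a $\kappa$-vector space, $\overline{KS}$ is the $\kappa$-span of the image of $K$ in $(\n/\n^2)^n$. Because $K\subseteq \m R^n$ and $\m^2\subseteq \n^2$, this image factors through the natural $k$-linear map $(\m/\m^2)^n \to (\n/\n^2)^n$ (with $k=R/\m$). Denoting by $L\subseteq (\m/\m^2)^n$ the image of $K$, the elementary base-change inequality---any $k$-subspace $V$ of a $\kappa$-vector space has $\kappa$-span of $\kappa$-dimension at most $\dim_k V$---yields $\dim_\kappa\overline{KS}\le \dim_k L$.

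Finally, because the composition $K \to R^n \to M$ vanishes, $L$ lies in the kernel of the natural surjection
$$\mu\colon (\m/\m^2)\otimes_k M/\m M \twoheadrightarrow \m M/\m^2 M,\qquad \overline a\otimes\overline v\mapsto \overline{av}$$
(after identifying $(\m/\m^2)^n \cong (\m/\m^2)\otimes_k M/\m M$ via the minimal presentation). A direct count gives $\dim_k\ker\mu = n\cdot\edim R - \nu_R(\m M)$, so $\dim_k L \le n\edim R - \nu_R(\m M)$. Assembling the three inequalities,
$$\nu_S(\n M') \;\ge\; n\edim S - \bigl(n\edim R - \nu_R(\m M)\bigr) = \nu_R(\m M) + (\edim S - \edim R)\,\nu_R(M),$$
which is the desired bound. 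The only delicate step is $\dim_\kappa \overline{KS}\le \dim_k L$: this is where the factor $(\edim S-\edim R)$ is produced, and it hinges on base change from $k$ to $\kappa$ not increasing dimensions of $\kappa$-spans of $k$-subspaces. Everything else is formal from right-exactness of $-\otimes_R S$ and counting.
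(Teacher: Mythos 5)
Your proof is correct, but it follows a genuinely different route from the paper's.

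The paper's argument is an induction: starting from the free case $M\cong R^n$ (where the claimed bound is an equality), it kills one relation $y\in\m M$ at a time, verifying directly that the inequality is preserved under each quotient $M\mapsto M/Ry$. The key accounting is that if $y\in\m M\setminus\m^2 M$ then $\nu_R(\m M)$ drops by exactly one while $\nu_S(\n M')$ drops by at most one, and if $y\in\m^2 M$ nothing changes. Your argument instead does a single global dimension count from a minimal presentation $K\hookrightarrow R^n\twoheadrightarrow M$: you express $\nu_S(\n M')=n\,\edim S-\dim_\kappa\overline{KS}$, bound $\dim_\kappa\overline{KS}$ by $\dim_k L$ via the base-change observation (the $\kappa$-span of the image of a finite-dimensional $k$-space has $\kappa$-dimension at most the $k$-dimension), and bound $\dim_k L$ by $\dim_k\ker\mu=n\,\edim R-\nu_R(\m M)$. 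Each step checks out (in particular $L\subseteq\ker\mu$ because relations go to zero in $M$, and $\overline{KS}$ really is the $\kappa$-span of the image of $L$ because $KS\subseteq\n S^n$ forces $\n\cdot KS\subseteq\n^2S^n$). The two proofs are of comparable length and both are elementary, but they illuminate different things: the paper's induction makes clear that the inequality tightens or stays put as you pass to smaller quotients, while your count isolates precisely where the $(\edim S-\edim R)\nu_R(M)$ term comes from, namely the $n\,\edim S$ versus $n\,\edim R$ discrepancy offset by the kernel of $\mu$. One small phrasing caveat: in the base-change step, $L$ itself lives in the $k$-space $(\m/\m^2)^n$, not in a $\kappa$-space; what you actually use is that a $k$-linear map out of $L$ has image whose $\kappa$-span has $\kappa$-dimension at most $\dim_k L$ (picking a $k$-basis and noting its images $\kappa$-span), which is what your inequality needs and is clearly what you intended.
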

\begin{proof}
The conclusion is obviously true if $M\cong R^n$: in this case we trivially have an equality. Therefore by induction, it suffices to prove the statement for $N=M/Ry$ where $y\in \m M$, assuming that it is true for $M$. We note that $N'=S\otimes_RN=M'/Sy$ and since $y\in \m M$, we have $\nu_R(M)=\nu_R(N)$.

If $y\in \m^2 M$, then the image of $y$ is in $\m^2 M'\subseteq \n^2 M'$, we have $\nu_R(\m M)=\nu_R(\m N)$ and $\nu_S(\n M')=\nu_S(\n N')$. So all the terms do not change when we pass from $M$ to $N$ and hence the conclusion holds for $N$. If $y\in \m M-\m^2M$, then $\nu_R(\m N)=\nu_R(\m M)-1$. Because the image of $y$ is in $\m M'\subseteq \n M'$, $\nu_S(\n N')\geq \nu_S(\n M')-1$. Since $\nu_R(N)=\nu_R(M)$, the remaining terms do not change, and we see that the inequality continues to hold for $N$.
\end{proof}

We are now ready to prove our second inequality on multiplicities under flat local extensions. Our strategy of the proof is inspired by \cite[Proposition 4.3.4]{HanesThesis}.\footnote{Hanes essentially proved this result under the additional (strong) hypothesis that $R$ admits a small maximal Cohen-Macaulay module and $R/\m$ is perfect (and the result was only stated in dimension three).} Our main new ingredients here are Lemma \ref{lemma--vanishing of higher Koszul}, Lemma \ref{lemma--multiplicity in terms of the limit}, and Lemma \ref{lemma--estimate on e_S involving higher Euler character}, which will drop the additional hypothesis in Hanes's argument.

\begin{theorem}
\label{theorem--inequalities on multiplicities when the difference of embdim is large}
Let $(R,\m)\to (S,\n)$ be a flat local map between complete local rings of characteristic $p>0$ with $\dim R=\dim S=d$. Suppose $R$ is a domain and $S/\n$ is algebraically closed. If $\edim S-\edim R=c\geq d$, then we have $$e(R)\leq \frac{d!}{2^d+c-d}e(S)\leq \frac{d!}{2^d}e(S).$$
\end{theorem}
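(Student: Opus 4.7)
The plan is to realize $e(S)$ as a Frobenius limit via Lemma~\ref{lemma--multiplicity in terms of the limit}, bound that limit below using the Euler-characteristic estimate of Lemma~\ref{lemma--estimate on e_S involving higher Euler character}, translate the resulting minimal-generator counts on $N_e:=T^{(e)}\otimes_T S$ into Hilbert--Kunz lengths on $T$, and finally pull these back to $R$ via Lemma~\ref{lemma--Appendix}. The structural input that enables the last step is that when $\dim R=\dim S$, the factorization provided by Lemma~\ref{lemma--factoring maps with c.i. closed fibres} in fact has $T/\m T$ regular of dimension $c$: since $J$ is perfect with $\pd_T S=c$ one has $\dim T/\m T=\dim T-d=c$, while $J\subseteq\n_T^2$ forces $\edim T/\m T=\edim T-\edim R=c$, and a local ring whose embedding dimension equals its Krull dimension is regular.

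To execute the argument, first apply Lemma~\ref{lemma--factoring maps with c.i. closed fibres} to produce the factorization $R\to T\to S=T/J$, and then Lemma~\ref{lemma--choose generically CM sop} to choose a minimal reduction $\underline{x}=x_1,\dots,x_d$ of $\n$ that is part of a system of parameters of $T$ with $T_P$ Cohen--Macaulay for each minimal prime $P$ of $(\underline{x})$. Lemma~\ref{lemma--multiplicity in terms of the limit} then gives $e(S)=\lim_e e(\underline{x},N_e)/p^{e\dim T}$ and Lemma~\ref{lemma--estimate on e_S involving higher Euler character} yields
\[ e(\underline{x},N_e)\geq \nu_S(\n N_e)+(1-d)\nu_S(N_e)-\chi_1(\underline{x},N_e), \]
while Lemma~\ref{lemma--vanishing of higher Koszul} forces $\chi_1(\underline{x},N_e)/p^{e\dim T}\to 0$. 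The key computation is an explicit identification of $N_e$: as a set $N_e\cong T/J^{[p^e]}$ with $S$-action $s\cdot\overline{x}=\overline{\tilde{s}^{p^e}x}$ for any lift $\tilde{s}\in T$ of $s\in S$. Using $J\subseteq\n_T^2$ (so $J^{[p^e]}\subseteq (\n_T^{[p^e]})^2$), one reads off
\[ \nu_S(N_e)=l_T(T/\n_T^{[p^e]}), \qquad \nu_S(\n N_e)=l_T(T/(\n_T^2)^{[p^e]})-l_T(T/\n_T^{[p^e]}). \]
Combining the three displays, dividing by $p^{e\dim T}$, and letting $e\to\infty$ produces $e(S)\geq e_{HK}(\n_T^2,T)-d\cdot e_{HK}(T)$.

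Finally, Lemma~\ref{lemma--Appendix} (applicable because $T/\m T$ is regular) converts this to $e(S)\geq e_{HK}(\m^2,R)+(c-d)e_{HK}(R)$, and the standard inequalities $e_{HK}(I,R)\geq e(I,R)/d!$ together with $e(\m^2,R)=2^d e(R)$ give $e(S)\geq \frac{2^d+c-d}{d!}e(R)$; the second inequality in the theorem is immediate from $c\geq d$. The most delicate step will be tracking the $S$-module structure on $T^{(e)}\otimes_T S$ precisely enough to establish the two equalities for $\nu_S(N_e)$ and $\nu_S(\n N_e)$: the ring multiplication in $T$ becomes Frobenius-twisted when one views $T^{(e)}$ as a $T$-module, so any bookkeeping slip there would corrupt the final constant. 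Once those formulas are in hand, the remainder is just assembly of lemmas already established earlier in the paper.
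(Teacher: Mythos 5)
There is a genuine gap. Your plan hinges on the claim that the factorization from Lemma~\ref{lemma--factoring maps with c.i. closed fibres} has $T/\m T$ regular, with the justification ``$J\subseteq\n_T^2$ forces $\edim T/\m T=\edim T-\edim R=c$.'' The equality $\edim T-\edim R=c$ is correct (since $J\subseteq\n_T^2$ gives $\edim T=\edim S$), but $\edim T/\m T=\edim T-\edim R$ is \emph{not} a general property of flat local maps, and it can fail precisely in the situation Lemma~\ref{lemma--factoring maps with c.i. closed fibres} produces. Concretely: take $R=K[[a]]\to S=K[[u,v]]/(v^2-u^3)$ via $a\mapsto -u^2$, so $d=1$, $\edim R=1$, $\edim S=2$, $c=1$. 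The Cohen factorization starts with $T'=K[[a,u,v]]$, $J'=(a+u^2,\,v^2-u^3)$, $b=2$. The only linear form in $J'$ modulo $\n_{T'}^2$ is $a$, so every admissible $y_1\in J'\setminus\n_{T'}^2$ has linear term proportional to $a$; hence its image in $T'/\m T'=K[[u,v]]$ lands in $(u,v)^2$. Killing $y_1=a+u^2$ gives $T\cong K[[u,v]]$, $J=(v^2-u^3)\subseteq\n_T^2$, $\pd_T S=1$, so all conclusions (1)--(3) of Lemma~\ref{lemma--factoring maps with c.i. closed fibres} hold --- yet $T/\m T\cong K[[u,v]]/(u^2)$ has $\edim=2$, $\dim=1$, and is a non-regular complete intersection. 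Without $T/\m T$ regular you cannot invoke Lemma~\ref{lemma--Appendix} (which genuinely requires regularity: e.g.\ for $R=K[[b]]\to T=K[[b,u,v]]/(u^2-bv)$ one has $e_{HK}(T)>1=e_{HK}(R)$), so the final conversion from Hilbert--Kunz data over $T$ to data over $R$ breaks.

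Your identification of $N_e\cong T/J^{[p^e]}$ and the resulting formulas $\nu_S(N_e)=l_T(T/\n_T^{[p^e]})$ and $\nu_S(\n N_e)=l_T(T/(\n_T^2)^{[p^e]})-l_T(T/\n_T^{[p^e]})$ under the hypothesis $J\subseteq\n_T^2$ are correct, and this is a nice direct route to the middle inequality $e(S)\geq e_{HK}(\n_T^2,T)-d\cdot e_{HK}(T)$. But the structural incompatibility above is exactly why the paper does \emph{not} use Lemma~\ref{lemma--factoring maps with c.i. closed fibres} here. Instead it takes a Cohen factorization (so $T/\m T$ is regular from the outset, and Lemma~\ref{lemma--Appendix} applies), pays the price that $J$ need not lie in $\n_T^2$, and compensates with Lemma~\ref{lemma--Hanes' lemma}, which supplies the inequality $\nu_S(\n N_e)\geq\nu_T(\n_T T^{(e)})+(\edim S-\edim T)\nu_T(T^{(e)})$; the correction term $(\edim S-\edim T)\leq 0$ quantitatively accounts for the generators of $J$ outside $\n_T^2$, and everything telescopes to the same bound $e(S)\geq e_{HK}(\m^2,R)+(c-d)e_{HK}(R)$. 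If you want to salvage your version, you would either need to establish that Lemma~\ref{lemma--factoring maps with c.i. closed fibres} can be arranged so that $T/\m T$ is regular (false, per the example), or extend Lemma~\ref{lemma--Appendix} beyond regular closed fibers (also false); the realistic fix is to switch to the Cohen factorization and incorporate Lemma~\ref{lemma--Hanes' lemma} as the paper does.
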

\begin{proof}
The second inequality is trivial because we assumed $c\geq d$. Let $$(R,\m)\to (T,\n_T)\to (S,\n)=T/J$$ be a Cohen-factorization as in Theorem \ref{theorem--Cohen factorization}. We fix $\underline{x}=x_1,\dots,x_d$ a minimal reduction of $\n$. We apply Lemma \ref{lemma--estimate on e_S involving higher Euler character} to $\underline{x}$ and $N=T^{(e)}\otimes_TS$ (note that $T^{(e)}\otimes_TS$ is a finitely generated $S$-module because $T$ is complete with algebraically closed residue field) to get:
\begin{equation}
\label{equation--multiplicity after applying the first lemma}
e(\underline{x}, T^{(e)}\otimes S) \geq \nu_S(\n(T^{(e)}\otimes S))+(1-d)\cdot\nu_S(T^{(e)}\otimes S)-\chi_1(\underline{x}, T^{(e)}\otimes S).
\end{equation}
Next we apply Lemma \ref{lemma--Hanes' lemma} to $R=T$ and $M=T^{(e)}$ to get:
\begin{equation}
\label{equation--after applying Hanes' lemma}
 \nu_S(\n(T^{(e)}\otimes S))\geq \nu_T(\n_T T^{(e)})+(\edim S-\edim T)\cdot\nu_T(T^{(e)}).
\end{equation}
Combining (\ref{equation--multiplicity after applying the first lemma}) and (\ref{equation--after applying Hanes' lemma}) and noticing that $\nu_S(T^{(e)}\otimes S)=\nu_T(T^{(e)})$ because $S$ is a quotient of $T$, we have:
\begin{equation}
\label{equation--main estimates before dividing p^ed}
e(\underline{x}, T^{(e)}\otimes S)\geq \nu_T(\n_T T^{(e)})+(\edim S-\edim T+1-d)\cdot\nu_T(T^{(e)})-\chi_1(\underline{x}, T^{(e)}\otimes S).
\end{equation}
Now we observe that when $e\to\infty$, we have:
$$e(\underline{x}, T^{(e)}\otimes S) \rightarrow e(\underline{x},S)\cdot p^{e\cdot\dim T} \text{ by Lemma \ref{lemma--multiplicity in terms of the limit}}, $$
$$\nu(T^{(e)})=l_T(\frac{T^{(e)}}{\n_T T^{(e)}})\rightarrow e_{HK}(T)\cdot p^{e\cdot\dim T},$$
$$\nu(\n_T T^{(e)})=l_T(\frac{\n_T T^{(e)}}{\n_T^2T^{(e)}})=l_T(\frac{T^{(e)}}{\n_T^2T^{(e)}})-l_T(\frac{T^{(e)}}{\n_T T^{(e)}})\rightarrow (e_{HK}(\n_T^2,T)-e_{HK}(T))\cdot p^{e\cdot\dim T},$$
where the last two follow from the definition of the Hilbert-Kunz multiplicities and the fact that $T/\n_T$ is algebraically closed. More importantly, since we know that $J$ is a perfect ideal of $T$ by Lemma \ref{lemma--perfect ideal}, Lemma \ref{lemma--vanishing of higher Koszul} then implies
$$\chi_1(\underline{x}, T^{(e)}\otimes S)=\sum_{i=1}^d(-1)^{i-1}l_S(H_i(\underline{x}, T^{(e)}\otimes S))=o(p^{e\cdot \dim T}).$$
Hence after we divide (\ref{equation--main estimates before dividing p^ed}) by $p^{e\cdot\dim T}$ and let $e\to\infty$, we get:
\begin{eqnarray*}
\label{equation--main estimates after dividing p^ed}
e(\underline{x}, S)&\geq&(e_{HK}(\n_T^2,T)-e_{HK}(T))+(\edim S-\edim T+1-d)\cdot e_{HK}(T)\\
&=&e_{HK}(\n_T^2,T)+(\edim S-\edim T-d)\cdot e_{HK}(T)
\end{eqnarray*}
Finally, we apply Lemma \ref{lemma--Appendix} to obtain
\begin{eqnarray*}
e(\underline{x}, S)&\geq&e_{HK}(\m^2, R)+(\edim T-\edim R)\cdot e_{HK}(R)+ (\edim S-\edim T-d)\cdot e_{HK}(R)  \\
&=&e_{HK}(\m^2, R)+(\edim S-\edim R-d)\cdot e_{HK}(R)\\
&\geq& \frac{1}{d!}e(\m^2, R)+\frac{1}{d!}(c-d)\cdot e(R) \\
&=&\frac{2^d+c-d}{d!}\cdot e(R)
\end{eqnarray*}
where we use our assumption that $\edim S-\edim R=c\geq d$. Since $\underline{x}$ is a minimal reduction of $\n$, $e(S)=e(\underline{x}, S)$ and the above estimate immediately shows that $$e(R)\leq\frac{d!}{2^d+c-d}e(S).$$ This finishes the proof.
\end{proof}

\subsection{Proof of Theorem \ref{theorem--main technical theorem} in equal characteristic $p>0$} For the reader's convenience we restate our main theorem in characteristic $p>0$.
\begin{theorem}
\label{theorem--main theorem in characteristic p>0}
Let $(R,\m)\rightarrow (S,\n)$ be a flat local extension between local rings of equal characteristic $p>0$. If $\dim R=d$, then we have $$e(R)\leq \max\{1,\frac{d!}{2^d}\}\cdot e(S).$$ In particular, if $\dim R=3$, then $e(R)\leq e(S)$.
\end{theorem}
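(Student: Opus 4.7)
The plan is to combine the two main inequalities of Section 4 together with the reduction lemma from the introduction; the theorem essentially falls out by case analysis on $c = \edim S - \edim R$.

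First I would apply Lemma \ref{lemma--reduction} to reduce to the hypotheses required by Theorems \ref{theorem--inequalities on multiplicities in terms of difference of embdim} and \ref{theorem--inequalities on multiplicities when the difference of embdim is large}: namely, $(R,\m)$ and $(S,\n)$ both complete of the same dimension $d$, $R$ a domain, and $S/\n$ algebraically closed. By Theorem \ref{theorem--regularity defect of flat local extension}, the quantity $c = \edim S - \edim R$ is a nonnegative integer, so one can split into two cases depending on its size.

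In the case $c \leq d-1$, I would invoke Theorem \ref{theorem--inequalities on multiplicities in terms of difference of embdim} directly to obtain
\[
e(R) \leq \frac{c!}{2^c}\, e(S).
\]
In the case $c \geq d$, I would instead invoke Theorem \ref{theorem--inequalities on multiplicities when the difference of embdim is large} to obtain
\[
e(R) \leq \frac{d!}{2^d}\, e(S).
\]
To conclude, I would verify by a short elementary calculation that for $0 \leq c \leq d-1$ one has $\frac{c!}{2^c} \leq \max\{1, \frac{d!}{2^d}\}$: setting $f(c) = c!/2^c$, the ratios $f(c+1)/f(c) = (c+1)/2$ show that $f$ starts at $f(0) = 1$, drops to $f(1) = f(2) = 1/2$, and is non-decreasing afterwards, so $f(c) \leq \max\{f(0), f(d)\} = \max\{1, d!/2^d\}$ for all $c$ in the relevant range.

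Finally, for the $\dim R = 3$ special case, I would simply plug in $d = 3$: then $d!/2^d = 6/8 = 3/4 < 1$, so $\max\{1, d!/2^d\} = 1$ and the inequality $e(R) \leq e(S)$ follows. There is no real obstacle to this final proof itself since all the heavy lifting has been done in Section 4; the only point requiring a small amount of care is handling the boundary case $c = d$ consistently between the two theorems (both give the same bound $d!/2^d$ there, so either case split works) and checking that the elementary maximum of $c!/2^c$ over $c \leq d$ really is $\max\{1, d!/2^d\}$.
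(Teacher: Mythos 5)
Your proposal matches the paper's own proof essentially verbatim: reduce by Lemma \ref{lemma--reduction} to the complete/domain/algebraically-closed-residue-field setting, note $c=\edim S-\edim R\geq 0$ via Theorem \ref{theorem--regularity defect of flat local extension}, apply Theorem \ref{theorem--inequalities on multiplicities in terms of difference of embdim} when $c\leq d$ and Theorem \ref{theorem--inequalities on multiplicities when the difference of embdim is large} when $c\geq d$, and observe that $\max\{c!/2^c : 0\leq c\leq d\} = \max\{1, d!/2^d\}$ since $c!/2^c$ decreases from $c=0$ to $c=1$, is flat at $c=2$, and increases thereafter. Your more explicit elementary verification of the unimodality of $c!/2^c$ is a welcome addition but does not change the structure of the argument.
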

\begin{proof}
By Lemma \ref{lemma--reduction}, we may assume $\dim R=\dim S=d$, $R$ and $S$ are both complete, $R$ is a domain, and $S$ has algebraically closed residue field. Now the hypotheses of Theorem \ref{theorem--inequalities on multiplicities in terms of difference of embdim} and Theorem \ref{theorem--inequalities on multiplicities when the difference of embdim is large} are both satisfied. By Theorem \ref{theorem--regularity defect of flat local extension}, $c=\edim S-\edim S\geq 0$, thus Theorem \ref{theorem--inequalities on multiplicities in terms of difference of embdim} and Theorem \ref{theorem--inequalities on multiplicities when the difference of embdim is large} together tell us that $$e(R)\leq \max\{\frac{c!}{2^c}| 0\leq c\leq d\}\cdot e(S)=\max\{1, \frac{d!}{2^d}\}\cdot e(S).$$ This finishes the proof.
\end{proof}

\section{Reduction to characteristic $p>0$} In this section we will use reduction to characteristic $p>0$ to obtain Theorem \ref{theorem--main technical theorem} in characteristic $0$. The process of reducing Lech's conjecture in characteristic $0$ to characteristic $p>0$ is known at least to Mel Hochster, and most of the arguments are standard. Therefore we will omit the technical details in our presentation and direct the reader to the references (such as \cite{HochsterNonnegativityfollowingGabber} or \cite{DuttaAtheoremonSmoothness}) when necessary.

We should point out that, however, there are at least two nontrivial points when passing to characteristic $p>0$. First, we need to reduce Lech's conjecture to the case that $(R,\m)\to (S,\n)$ is a {\it module-finite} extension. We can only do this in characteristic $0$ in general. Second, we need to apply Artin approximation and reduction to characteristic $p>0$ while {\it preserving the multiplicities of $R$ and $S$}. This will be done by choosing minimal reductions of $\m$ and $\n$ and keeping track of the length of all the Koszul homology modules. We begin with the following lemma.

\begin{lemma}
\label{lemma--reduction to module finite}
Let $(R,\m)\to (S,\n)$ be a flat local map with $\dim R=d$. Suppose $R$ has equal characteristic $0$. In order to prove Lech's conjecture that $e(R)\leq e(S)$, or more generally, to prove $e(R)\leq C\cdot e(S)$ for certain constant $C$ depending only on $d$, we may assume $R\to S$ is a finite free extension.
\end{lemma}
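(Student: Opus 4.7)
The strategy is to reduce to the case where the residue field extension $R/\m \hookrightarrow S/\n$ is an isomorphism; once that is achieved, a flat local extension of equal dimension is automatically module-finite, and flat plus finitely generated over a local ring forces freeness.

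By Lemma~\ref{lemma--reduction} I may assume $R$ and $S$ are complete, $R$ is a domain, $\dim R=\dim S=d$, and $S$ has algebraically closed residue field $L$. The goal is to further arrange $R/\m=L$ by a flat base change of $R$ that preserves its multiplicity. I pick a coefficient field $k$ of $R$; since $k=R/\m\hookrightarrow L$ is separable in characteristic $0$, the structure theory of complete local rings lets me choose a coefficient field of $S$, still denoted $L$, that contains $k$ (this is where characteristic $0$ is essential). Now form $R':=R\mathbin{\hat\otimes}_k L$: writing $R=k[[z_1,\ldots,z_n]]/I$ via the Cohen structure theorem, set $R':=L[[z_1,\ldots,z_n]]/IL[[z_1,\ldots,z_n]]$. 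Then $R\to R'$ is faithfully flat with $\m R'=\m'$, the ring $R'$ is complete local of dimension $d$ with residue field $L$, and the direct length comparison
$$l_{R'}(R'/(\m')^t)=\dim_L(R/\m^t\otimes_k L)=\dim_k(R/\m^t)=l_R(R/\m^t)$$
shows $e(R')=e(R)$. Next form $S'':=S\mathbin{\hat\otimes}_R R'$, which is flat over $R'$; select a maximal ideal $Q$ of $S''$ corresponding to the diagonal surjection of the closed fiber $(S/\m S)\otimes_k L\twoheadrightarrow S/\m S$ coming from the multiplication map $L\otimes_k L\to L$, and let $\tilde S:=\widehat{(S'')_Q}$. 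Then $R'\to\tilde S$ is flat local, $\dim\tilde S=d$, the residue field of $\tilde S$ is $L$, and the localization theorem (Theorem~\ref{theorem--localization theorem on multiplicities}) combined with the length comparison above yields $e(\tilde S)\leq e(S)$.

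With this reduction in hand, the flat local extension $R'\to\tilde S$ of complete local rings satisfies $\dim R'=\dim\tilde S=d$ and $R'/\m'=\tilde S/\tilde\n=L$. The closed fiber $\tilde S/\m'\tilde S$ is $0$-dimensional, local, with residue field $L$, hence is a finite-dimensional $L$-vector space. Lifting an $L$-basis through the complete Nakayama lemma produces a finite generating set of $\tilde S$ as an $R'$-module. Since $\tilde S$ is flat and finitely generated over the local ring $R'$, it is free. Thus $R'\to\tilde S$ is finite free, and if we establish $e(R')\leq C\cdot e(\tilde S)$ in this setting then $e(R)=e(R')\leq C\cdot e(\tilde S)\leq C\cdot e(S)$, giving the desired inequality for the original pair.

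The main obstacle is the construction of $\tilde S$ with the correct dimension, residue field, and multiplicity bound. The tensor product $S\mathbin{\hat\otimes}_R R'$ is only semi-local, and its ideal structure is governed by $L\otimes_k L$, which when $L/k$ is transcendental can have positive Krull dimension; one must choose the maximal ideal $Q$ and verify $\dim(S'')_Q=d$ carefully, then use localization to control $e(\tilde S)$. The argument relies crucially on characteristic $0$ through separability of residue field extensions, which is what permits compatible extension of coefficient fields; in positive or mixed characteristic this step has no direct analogue, explaining why the lemma is restricted to equal characteristic $0$.
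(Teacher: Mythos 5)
You have the right idea for the first half: pass to the case where $R$ and $S$ are complete with the same algebraically closed residue field $L$, pick a coefficient field $K$ of $R$, use characteristic $0$ (separability) to realize $K\subseteq L$ with $L$ a coefficient field of $S$, and form $R'=R\mathbin{\widehat\otimes}_K L$, noting $e(R')=e(R)$. This all matches the paper. But your second half has a genuine gap, and in fact it is unnecessary.

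The gap: you base change $S$ to $S'':=S\mathbin{\widehat\otimes}_R R'$ and try to localize at the ``diagonal'' maximal ideal $Q$. As you yourself observe, the closed fiber $S''/\m'S''\cong (S/\m S)\otimes_K L$ is governed by $L\otimes_K L$, and when $\mathrm{trdeg}_K L>0$ the kernel of the multiplication map $L\otimes_K L\to L$ is a maximal ideal of \emph{positive} height. Consequently $\dim (S'')_Q=d+\height Q/\m'S''>d$, so $\tilde S:=\widehat{(S'')_Q}$ is not of dimension $d$, and $R'\to\tilde S$ is certainly not module-finite. You cannot fix this by localizing at a minimal prime $P$ of $\m'S''$ instead: then $\dim(S'')_P=d$, but $\kappa(P)$ properly contains $L$ when $L/K$ is transcendental, so you lose the residue-field equality that the rest of your argument relies on. The localization theorem also does not directly apply to bound $e(\tilde S)$ by $e(S)$, since $S''$ is not local and $\tilde S$ is a localization of $S''$, not of $S$; you would need a separate argument. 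You flag this as the ``main obstacle'' but do not resolve it, and it is a real obstruction, not a technicality.

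The paper avoids this entirely by the simpler observation that, since $K\subseteq L$ as subfields of $S$, the maps $R\to S$ and $L\hookrightarrow S$ agree on $K$, which already gives a ring map $R\otimes_K L\to S$ and hence, after completion, a local map $R'\to S$. There is no need to modify $S$ at all. One then verifies that $R'\to S$ is flat by the local criterion of flatness: taking a free $R$-resolution $F_\bullet\to R/\m$ and using that $R'$ is $R$-flat with $\m R'=\m'$, one gets $\Tor_1^{R'}(R'/\m',S)=\Tor_1^R(R/\m,S)=0$. Now $R'\to S$ is a flat local map of complete local rings of the same dimension and the same residue field, and such a map is automatically module-finite (the closed fiber is $0$-dimensional with residue field $L$), hence finite free. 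This is strictly simpler than your route: it uses the same $R'$, but replaces the tensor product $S\mathbin{\widehat\otimes}_R R'$ and the localization at $Q$ with the direct map $R'\to S$, eliminating both the dimension problem and the need to control $e(\tilde S)$.
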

\begin{proof}
By Lemma \ref{lemma--reduction}, we may assume $R$ and $S$ are both complete, $\dim R=\dim S=d$, and $S$ has algebraically closed residue field. Let $K$ be the coefficient field of $R$. Since we are in characteristic $0$, $K$ is contained in the coefficient field $L$ of $S$. Thus the two maps $R\to S$ and $L\to S$ agree on $K$. Hence we have the following natural maps: $$R\to R\widehat{\otimes}_KL\to S.$$

Let $R'=R\widehat{\otimes}_KL$ and $\m'=\m R'$. Note that $(R',\m')$ is still a complete local ring of dimension $d$, and we have $e(R)=e(R')$. Moreover, since $R'$ is flat over $R$ with $\m'=\m R'$, $\Tor^{R'}_1(R'/\m', S)=\Tor^R_1(R/\m, S)=0$. So by the local criterion of flatness, $R'\to S$ is still a flat local map. Now we can replace $R$ by $R'$ to assume $R\to S$ is a flat local map between complete local rings (of equal characteristic $0$) of the same dimension and same residue field. But it is well known that such an extension must be module-finite. Therefore we reduce to the case that $R\to S$ is a finite free extension.
\end{proof}

\begin{remark}
\label{remark--reduction to module finite not clear in general}
To the best of our knowledge, the above ``reduction to module-finite case" is not clear in characteristic $p>0$ (or in mixed characteristic) in general. In characteristic $p>0$, one might hope to use the same method as in Lemma \ref{lemma--reduction to module finite}. But the subtle point here is that the coefficient field of $R$ might not be contained in a coefficient field of $S$ and thus we cannot construct the desired $R'$ as in Lemma \ref{lemma--reduction to module finite}.
\end{remark}

We now start to prove Theorem \ref{theorem--main technical theorem} in characteristic $0$. There are two steps.

\subsection{Reduction to local rings essentially of finite type over a field} Suppose we have a counter-example $(R,\m)\to (S,\n)$ to Theorem \ref{theorem--main technical theorem}. By Lemma \ref{lemma--reduction to module finite}, we can assume that $(R,\m)\to (S,\n)$ is a finite free extension between complete local rings of dimension $d$, and $R$, $S$ have the same coefficient field $K=\overline{K}$ (of characteristic $0$). By Cohen's structure theorem we can fix $(A,\m_A)=K[[x_1,\dots,x_d]]\to R$ a module-finite extension. Now we think of this counter-example as a pair of finitely generated $A$-modules $R$, $S$ such that:
\begin{enumerate}[(1)]
\item $R$ and $S$ both have an algebra structure, they are both local of dimension $d$, with residue field $K$, and $S$ is finite free as an $R$-module;
\item $e(R)=\alpha$, $e(S)=\beta$, such that $\alpha>\max\{1,\frac{d!}{2^d}\}\beta$.
\end{enumerate}

The idea is to use Artin approximation \cite{ArtinApproximation} to replace this example by an example with the same properties but constructed over $B=(K[x_1,\dots,x_d]_{(x_1,\dots,x_d)})^h$, i.e., the Henselization of the ring $K[x_1,\dots,x_d]_{(x_1,\dots,x_d)}$. Since the Henselization is a direct limit of pointed \'{e}tale extensions of $K[x_1,\dots,x_d]_{(x_1,\dots,x_d)}$, it follows immediately that the original counter-example descends to $R\to S$, both essentially of finite type over $K$.

We think of $R$ generated over $A$ by $\theta_0=1, \theta_1,\dots, \theta_n$ and $S$ generated over $R$ by $\eta_0=1,\eta_1,\dots,\eta_r$, think of $S$ generated over $A$ by $\xi_{ij}$ (corresponding to $\theta_i\eta_j$). As an $A$-module, $R$ (resp., $S$) can be represented as the cokernel of a finite matrix $(a_{ij})$ (resp., $(b_{ij})$). To descend to the Henselization we want to think of $a_{ij}$ and $b_{ij}$ as solutions in $A$ of a finite system of polynomial equations over $B$. There will also be a lot of additional auxiliary elements involved in these equations, i.e., the system of polynomial equations will involve many variables besides those corresponds to $a_{ij}$ and $b_{ij}$. The idea is to construct a large family of equations satisfied by $a_{ij}$, $b_{ij}$, and those auxiliary variables such that when we take a new solution in $B$, congruent to the original solution modulo a certain high power of $\m_A$, we can use this solution to get a counter-example over $B$. Therefore, the key point here is to express (1) and (2) {\it equationally}.


The fact that $R$ and $S$ have an algebra structure can be expressed using equations is well known and can be found in many references (for example, see \cite{Smithtightclosureofparameterideals}). We can keep track of the dimensions of $R$ and $S$ using equations---this follows from \cite[Page 12, (9)]{HochsterNonnegativityfollowingGabber}. We can use equations to characterize certain element of $R$ winds up in $\m_A R$, this is done in \cite{Smithtightclosureofparameterideals}. Using this, we can then describe $R$ being a local ring with residue field $K$ equationally (and similarly for $S$) by first keeping track of the lengths of $R/\m_A R$ and $R/(\m_A+(\theta_1,\dots, \theta_n))R$---which follows from \cite[Page 12, (7)]{HochsterNonnegativityfollowingGabber} or \cite[Lemma 3.2]{DuttaAtheoremonSmoothness}, and then keeping track of the property that all generators of a fixed power of $(\theta_1,\dots, \theta_n)$ lies in $\m_A R$: this forces $(\theta_1,\dots, \theta_n)$ to be the only maximal ideal in the Artinian ring $R/\m_AR$ and thus $R$ is local with unique maximal ideal $\m_A+(\theta_1,\dots, \theta_n)$, whose residue field is $K$ (since we keep track that it has length $1$). The fact that $S$ is finite free over $R$ can be expressed using equations of $(a_{ij})$ and $(b_{ij})$: because we pick $\xi_{ij}=\theta_i\eta_j$ and so all $A$-relations of $\xi_{ij}$ are coming from $A$-relations of $\theta_i$. This shows that everything in (1) can be traced using equations.

Next we explain why (2) can be expressed equationally. This follows from the following more general results:
\begin{enumerate}[(i)]
\item We can keep track of a sequence of elements $\underline{y}=y_1,\dots,y_d\in R$ (resp., $\underline{z}=z_1,\dots,z_d\in S$) such that $(y_1,\dots,y_d)$ is a minimal reduction of $\m$ (resp., $(z_1,\dots,z_d)$ is a minimal reduction of $\n$) using equations.
\item We can keep track of the Euler characteristic $\chi(\underline{y}, R)$ (resp., $\chi(\underline{z}, S)$).
\end{enumerate}

To see (i), we set $y_i=r_{i1}\theta_1+\cdots+r_{in}\theta_n$ where $r_{ij}$ are (solutions) in $A$. To make sure that $(y_1,\dots,y_d)$ is a minimal reduction of $\m$, note that we can express $\m^N=(y_1,\dots,y_d)\m^{N-1}$ for some fixed $N$ using equations: this is clear since $\m^N=(y_1,\dots,y_d)\m^{N-1}$ amounts to say that for every $k_1+\cdots+k_n=N$, we have
\begin{equation}
\label{equation--describing a minimal reduction using equations}
\theta_1^{k_1}\cdots\theta_n^{k_n}=\sum_{l_1+\cdots+l_n=N-1} c_{il_1\cdots l_n}^{k_1\cdots k_n}y_i\theta_1^{l_1}\cdots\theta_n^{l_n} \hspace{1em} \text{ in } R.
\end{equation}
From (\ref{equation--describing a minimal reduction using equations}), we can further plug in $y_i=r_{i1}\theta_1+\cdots+r_{in}\theta_n$ and write each $c_{il_1\cdots l_n}^{k_1\cdots k_n}$ as $\sum s_j\theta_j$ where $s_j$ are (solutions) in $A$. We thus get many equations over $R$. But then these equations could be written as equations over $A$ using the variables introduced when we describe the multiplication structure on $R$ and extra equations involving $(a_{ij})$, the relation matrix representing $R$ as an $A$-module. We can do exactly the same thing for $z_1,\dots,z_d$ in $S$. Finally, (ii) follows from the fact that we can actually keep track of a finite complex of finitely generated modules as well as its homology \cite[Page 12, (8)]{HochsterNonnegativityfollowingGabber} (this was originated from \cite[Theorem 6.2]{PeskineSzpiroDimensionProjective}, see also \cite{DuttaAtheoremonSmoothness}). Note that we should be careful here because we also need to keep track that the complex is the Koszul complex of $y_1,\dots,y_d$ on $R$ (resp. the Koszul complex of $z_1,\dots,z_d$ on $S$), but we can add auxiliary equations to describe this property.

By the above discussion, we see that we can use a large family of equations to keep track of (1) and (2), i.e., a counter-example of Theorem \ref{theorem--main technical theorem}. Hence by Artin approximation \cite{ArtinApproximation}, these equations have a solution in $B=(K[x_1,\dots,x_n]_{(x_1,\dots,x_n)})^h$, i.e., we have a counter-example $(R,\m)\to (S,\n)$ where $R$ and $S$ are finite over $B$ with $R/\m=S/\n=K$.  Furthermore, since $B$ is a direct limit of local rings essentially of finite type over $K$. We know that we have a counter-example $(R,\m)\to (S,\n)$ of Theorem \ref{theorem--main technical theorem} where both $R$ and $S$ are local rings essentially of finite type over $K$ with $R/\m=S/\n=K$.

\subsection{Reduction to characteristic $p>0$} Since $R$, $S$ are local rings essentially finite type over $K=\overline{K}$ with $R/\m=S/\n=K$,\footnote{In fact we don't have to assume $K$ is algebraically closed and $R/\m=S/\n=K$ in the process of reduction to characteristic $p>0$. However, assuming these will simply the argument.} by Nullstellensatz we can do a change of variables if necessary to assume: $$(R,\m)\cong\left(\frac{K[y_1,\dots,y_m]}{I}\right)_{(y_1,\dots,y_m)} \text{ and } (S,\n)\cong\left(\frac{K[z_1,\dots,z_n]}{J}\right)_{(z_1,\dots,z_n)}.$$
Because $(R,\m)\to (S,\n)$ is a finite flat extension, we can find $f\in K[y_1,\dots,y_m]$ and $g\in K[z_1,\dots,z_n]$ such that $$\left(\frac{K[y_1,\dots,y_m]}{I}\right)_f\to\left(\frac{K[z_1,\dots,z_n]}{J}\right)_g$$ is a well-defined finite flat extension (note that the above two rings are both finite type over $K$). Now we can adjoin an extra variable to $y_1,\dots,y_m$ and $z_1,\dots,z_n$ respectively, and by Nullstellensatz we can perform a linear change of variables to assume that we have a finite flat extension: 
$$R_K=\frac{K[y_0,y_1,\dots,y_m]}{I}\to \frac{K[z_0, z_1,\dots,z_n]}{J}=S_K,$$ such that $\m=(y_0, y_1,\dots,y_m)$, $\n=(z_0, z_1,\dots,z_n)$, and $R=(R_K)_\m\to S=(S_K)_\n$ is our counter-example to Theorem \ref{theorem--main technical theorem}. Furthermore we can assume that $\underline{y}=y_1,\dots,y_d$ is a minimal reduction of $\m$ and $\underline{z}=z_1,\dots,z_d$ is a minimal reduction of $\n$. We can assume that all the Koszul homology modules $H_i(\underline{y}, R_K)$ and $H_i(\underline{z}, S_K)$ are supported only at $\m$ and $\n$ respectively: they are supported at finitely many maximal ideals, so we can localize $R_K$ and $S_K$ at one extra element respectively (and then adjoin extra variables and perform a linear change of variables to assume $R_K$ and $S_K$ still have the shape as above) to assume they are only supported at $\m$ and $\n$ respectively.

At this point, we pick a finitely generated $\mathbb{Z}$-algebra $W$ of $K$ such that the coefficients of a set of generators of $I$, $J$ are contained in $W$. By generic freeness, we can shrink $W$ if necessary to assume we have a well-defined map between free $W$-algebras: $$R_W=\frac{W[y_0, y_1,\dots,y_m]}{I_W}\to \frac{W[z_0, z_1,\dots,z_n]}{J_W}=S_W.$$

By shrinking $W$, we may assume that $R_W\to S_W$ is still a finite flat extension and $\underline{y}$, $\underline{z}$ are still minimal reductions of $\m$, $\n$ respectively (here we still use $\m$ and $\n$ to denote the ideals ($y_0, y_1,\dots,y_m)$ and $(z_0, z_1,\dots,z_n)$ in $R_W$ and $S_W$ respectively). By generic freeness, we can shrink $W$ further to assume that all the kernels and cokernels of the Koszul complexes $K_\bullet(\underline{y}, R_W)$ and $K_\bullet(\underline{z}, S_W)$ are free $W$-modules, and the homologies are finite free $W$-modules. Therefore we have $$e((R_K)_\m)=\chi(\underline{y}, (R_K)_\m)=\chi(\underline{y}, R_K)=\sum(-1)^il_{R_K}(H_i(\underline{y}, R_K))=\sum (-1)^i\rank_WH_i(\underline{y}, R_W), $$ $$e((S_K)_\n)=\chi(\underline{z}, (S_K)_\n)=\chi(\underline{z}, S_K)=\sum(-1)^il_{S_K}(H_i(\underline{z}, S_K))=\sum (-1)^i\rank_WH_i(\underline{z}, S_W). $$

We pick a maximal ideal $Q$ of $W$ and note that $\kappa=W/Q$ is a field of characteristic $p>0$. Tensoring $R_W\to S_W$ with $\kappa$, we obtain a flat extension $R_\kappa\to S_\kappa$ (we can shrink $W$ to assume $S_W/R_W$ is free over $W$ and thus the map is an injection). We claim that $(R_\kappa)_\m\to(S_{\kappa})_\n$ is a counter-example to Theorem \ref{theorem--main technical theorem} in characteristic $p>0$, which would contradict our Theorem \ref{theorem--main theorem in characteristic p>0}. It is clear that $(R_\kappa)_\m\to(S_{\kappa})_\n$ is a flat local extension. Moreover, we know that $\underline{y}$ and $\underline{z}$ are minimal reductions of $\m$ and $\n$ in $(R_\kappa)_\m$ and $(S_\kappa)_\n$ respectively, because this is the case even in $R_W$ and $S_W$. Therefore, since the kernels, cokernels and homologies of the Koszul complexes $K_\bullet(\underline{y}, R_W)$ and $K_\bullet(\underline{z}, S_W)$ are all free $W$-modules, we have
$$H_i(\underline{y}, R_W)\otimes_W\kappa\cong H_i(\underline{y}, R_\kappa) \text{ and } H_i(\underline{z}, S_W)\otimes_W\kappa=H_i(\underline{z}, S_\kappa)$$
for every $i$. Because we can also assume that the homologies of $K_\bullet(\underline{y}, R_W)$ and $K_\bullet(\underline{z}, S_W)$ are annihilated by a power of $\m$ and $\n$ respectively by shrinking $W$ further, $K_\bullet(\underline{y}, R_\kappa)$ and $K_\bullet(\underline{z}, S_\kappa)$ are supported only at $\m$ and $\n$ respectively. Therefore we have
$$e((R_\kappa)_\m)=\chi(\underline{y}, (R_\kappa)_\m)=\chi(\underline{y}, R_\kappa)=\sum(-1)^il_{R_\kappa}(H_i(\underline{y}, R_\kappa))=\sum (-1)^i\rank_WH_i(\underline{y}, R_W),$$ $$e((S_\kappa)_\n)=\chi(\underline{z}, (S_\kappa)_\n)=\chi(\underline{z}, S_\kappa)=\sum(-1)^il_{S_\kappa}(H_i(\underline{z}, S_\kappa))=\sum (-1)^i\rank_WH_i(\underline{z}, S_W).$$
Thus $e((R_\kappa)_\m)=e((R_K)_\m)$ and $e((S_\kappa)_\n)=e((S_K)_\n)$.

\vspace{1em}

Theorem \ref{theorem--main technical theorem} is finally proved!
\bibliographystyle{skalpha}
\bibliography{CommonBib}
\end{document}